\def\<{\langle}
\def\>{\rangle}
 \def\backsl{\backslash}
\def\C{{\mathbb C}}
\def\N{{\mathbb N}}
\def\Z{{\mathbb Z}}
\def\R{{\mathbb R}}
\def\co{\colon \thinspace}
 \def\wtil{\widetilde}
\renewcommand{\phi}{\varphi}
\newtheorem{theorem}{Theorem}[section]
\newtheorem{lemma}[theorem]{Lemma}
\newtheorem{proposition}[theorem]{Proposition}
\newtheorem{corollary}[theorem]{Corollary}
\newtheorem{definition}[theorem]{Definition}
\newtheorem{conjecture}[theorem]{Conjecture}
\title{Reducible braids and Garside theory}
\author{Juan Gonz\'alez-Meneses\footnote{Partially supported under Australian Research Council's Discovery Projects funding scheme (project number DP1094072), the Spansih Projects MTM2007-66929, P09-FQM-5112 and FEDER.} \and Bert Wiest}
\date{August 2, 2010}
\begin{document}

\maketitle
{\abstract We show that reducible braids which are, in a Garside-theoretical sense, as simple as possible within their conjugacy class, are also as simple as possible in a geometric sense. More precisely, if a braid belongs to a certain subset of its conjugacy class which we call the stabilized set of sliding circuits, and if it is reducible, then its reducibility is geometrically obvious: it has a round or almost round reducing curve. Moreover, for any given braid, an element of its stabilized set of sliding circuits can be found using the well-known cyclic sliding operation. This leads to a polynomial time algorithm for deciding the Nielsen-Thurston type of any braid, modulo one well-known conjecture on the speed of convergence of the cyclic sliding operation.}


\section{Introduction}

There are currently two known approaches to the problem of determining
algorithmically the Nielsen-Thurston type of a given braid, i.e.\
deciding whether it is reducible, periodic, or pseudo-Anosov
\cite{FLB,CB,FarbMargalit}. Since periodicity of braids is fast
and easy to detect \cite{GW}, the main difficulty is to determine
whether a given braid is reducible.

One approach is due to Bestvina and Handel~\cite{BH}, and uses the
theory of train tracks (see also~\cite{Los}). The algorithmic complexity
of the Bestvina-Handel algorithm is still mysterious -- this is
particularly regrettable since it seems to be fast in practice,
at least generically.

The second approach, which was initiated by Benardete, Guti\'errez and
Nitecki~\cite{BGN}, and developed by Lee and Lee~\cite{LL}, uses the Garside structure, as exposed in~\cite{EM},
on the braid group. Indeed, it is shown in~\cite{BGN} that round
reduction curves are preserved by cycling and decycling. As a consequence, if a
given braid~$x\in B_n$ is reducible, then there
is at least one element of its super summit set~\cite{EM} which has a round
reduction curve, and whose reducibility is thus easy to to detect.
The drawback of this approach is that the algorithm has to compute
the complete super summit set of~$x$, and this is is very slow~\cite{Gebhardt}.

In order to have any hope of obtaining a polynomial time algorithm
from the second approach, we would need to replace the super summit set of $x$ with another set satisfying the following properties: (1) It is an invariant of the conjugacy class of $x$, (2) an element in this subset can be computed efficiently, and (3) for {\it every} element in this subset, the reducibility or irreducibility can be detected rapidly. Super summit sets satisfy the first two properties, but not the third.

In the special case of the four-strand braid group, the super summit set
can actually do the job~\cite{CalvezWiest}. In the general case of the braid
group~$B_n$ (with $n\in\N$), the ultra summit set defined in~\cite{Gebhardt} can do the job, but only under certain conditions. It is shown in~\cite{LL} that if a braid is reducible and the {\it external component} is simpler (from the Garside theoretical point of view) than the whole braid, then one can rapidly detect reducibility of any given element in its ultra summit set, as every element in this set has a round reduction curve. Hence, under this hypothesis, the ultra summit set satisfies (1) and (3) above. It is a well-known conjecture~\cite{BGG1} that it also satisfies (2).

The aim of the present paper is to construct a subset of any conjugacy class which satisfies (1) and (3) above, and is conjectured to also satisfy (2), just like Lee and Lee's subset~\cite{LL}, but without their technical hypothesis. In particular, we prove the existence of a polynomial time algorithm for deciding the reducibility or irreducibility of a given braid, modulo a well-known conjecture (Conjecture~\ref{C:no of slidings}), again concerning (2) above, which we leave open.

Where Benardete, Guti\'errez and Nitecki talk about round curves, we
have to admit a somewhat larger family of reducing curves which we call
\emph{almost round curves}.
Also, the subset of the conjugacy class for which our result holds is neither
the super summit set nor the ultra summit set, but a slightly more
complicated class, which we call the $m$ times stabilised set of sliding circuits, denoted $SC^{[m]}(x)$, where~$m$ is a positive integer.

We will show that one can conjugate a given element~$x$
of~$B_n$ to an element in $SC^{[m]}(x)$, by applying iteratively a
special kind of conjugation called {\it cyclic sliding}. This iterated
cyclic sliding procedure is a Garside-theoretic tool which simplifies
(from an algebraic point of view) the braid within its conjugacy class,
and which has already been used to solve the conjugacy problem in braid
groups and Garside groups \cite{GG1,GG2}.

Further, we will show the following result (where~$\Delta$ denotes the
half twist of all strands, so that $||\Delta||=n(n-1)/2$):

{\bf Theorem~\ref{T:main} }
{\it Let $x\in B_n$ be a non-periodic, reducible braid. There is some $m\leqslant ||\Delta||^3-||\Delta||^2$ such that every element $y\in SC^{[m]}(x)$ admits an essential reduction curve which is either round or almost round.
}

Theorem~\ref{T:main} is telling us that cyclic sliding not only simplifies braids from the algebraic, but also from the geometric point of view, since the reduction curves, which can be terribly tangled in~$x$, become either round or almost round after iterative applications of cyclic slidings.

Moreover, we prove that it can be efficiently checked whether there are
round or almost round curves which are preserved by a braid $y$ like in the statement of Theorem~\ref{T:main}. More precisely, invariant round curves can be efficiently detected by~\cite{BGN}. For almost round curves the situation is not the same: as the number of such curves grows exponentially with respect to the number of strands, it is not a good idea to try to check them one by one. To bypass this difficulty, we show the following particular case:

{\bf Theorem \ref{T:round_almostround}}
{\it There is an algorithm which decides whether a given positive braid~$x$ of length~$\ell$ with~$n$ strands preserves an almost-round curve whose interior strands do not cross. Moreover, this algorithm takes time $O(\ell \cdot n^4)$.}

Notice that Theorem~\ref{T:round_almostround} cannot immediately be applied to detect the reduction curves promised by Theorem~\ref{T:main}, for two reasons: firstly, none of these curves are necessarily $x$-invariant (they may be permuted by~$x$), and secondly, even if they were, there would be no guarantee that their interior strands do not cross. Moreover, $x$ is not necessarily positive (although this can be easily achieved just by multiplying $x$ by a suitable power of $\Delta^2$). There is, however, a situation which can be reduced to the cases that can be checked using Theorem ~\ref{T:round_almostround}. This is the situation where the given braid is {\it rigid}~\cite{BGG1}.

{\bf Theorem~\ref{T:rigid_case}.}
{\it Let $\beta\in B_n$ be a non-periodic, reducible braid which is rigid. Then there is some positive integer $k\leqslant n$ such that one of the following conditions holds:
\begin{enumerate}
\item $\beta^k$ preserves a round essential curve, or

\item $\inf(\beta^k)$ and $\sup(\beta^k)$ are even, and either $\Delta^{-\inf(\beta^k)}\beta^k$ or $\beta^{-k}\Delta^{\sup(\beta^k)}$ is a positive braid which preserves an almost round essential reduction curve whose corresponding interior strands do not cross.
\end{enumerate}
In particular, some essential reduction curve for $\beta$ is either round or almost round.
}

The power $k\leqslant n$ in the above statement is needed to pass from invariant {\it families of curves} to invariant {\it curves}, which is what is detected in Theorem~\ref{T:round_almostround}, and also to assure that $\inf(\beta^k)$ and $\sup(\beta^k)$ are even. We then see that if the braid $\beta$ under study is rigid and admits essential reduction curves, we can find them in one of the following two ways: If one these curves is round, we can apply the well known algorithm in~\cite{BGG2}. Otherwise, we will find them by applying Theorem~\ref{T:round_almostround} to $\Delta^{-\inf(\beta^k)}\beta^k$ (where $\inf(\beta^k)$ is even) and to $\beta^{-k}\Delta^{\sup(\beta^k)}$ (where $\sup(\beta^k)$ is even) for $k=1,\ldots, n/2$, as these braids have the same essential reduction curves as $\beta$.

The next aim is to construct, for any given $y\in SC^{[N]}(x)$, a rigid braid whose reducing curves are also reducing curves of~$y$. This serves two purposes at once: it allows us to use Theorem~\ref{T:round_almostround} to search for reducing curves in polynomial time, and it also gives the key to proving Theorem~\ref{T:main}.

In order to do so, we will, for every braid $y\in SC^{[N]}(x)$, define its {\it preferred conjugator} $P(y)$, which commutes with~$y$. We will prove:

{\bf Lemma~\ref{L:main_detailed}.}
{\it Let $x\in B_n$ be a non-periodic, reducible braid. Let $N=||\Delta||^3-||\Delta||^2$. For every element $y\in SC^{[N]}(x)$ there is some $m\leqslant N$ such that either $y^m$ is rigid, or $P(y^m)$ is rigid, admits essential reduction curves, and all its essential reduction curves are essential reduction curves of~$y$.}

From the above results, we obtain the following algorithm to determine whether a given element of $B_n$ is periodic, reducible or pseudo-Anosov:

\noindent {\bf Algorithm 1.} To determine the geometric type of a braid.

Input: $x\in B_n$.
\begin{enumerate}

 \item If $x^{n-1}$ or $x^n$ is a power of~$\Delta$, return `{\it $x$ is periodic}' and stop.

 \item Compute an element $y\in SC^{[N]}$, where $N=||\Delta||^3-||\Delta||^2$.

 \item If $y$ preserves a family of round curves, return `{\it $x$ is reducible, non-periodic}' and stop.

 \item For $m=1,\ldots,N$ do the following:

 If either $y^m$ is rigid or $P(y^m)$ is rigid, apply the algorithm in Theorem~\ref{T:round_almostround} to the braids mentioned in Theorem~\ref{T:rigid_case}(2), with $\beta=y^m$ or $\beta=P(y^m)$, respectively. If an almost round reduction curve is found, return `{\it $x$ is reducible, non-periodic}' and stop.

 \item Return `{\it $x$ is pseudo-Anosov}'.

\end{enumerate}

The computational complexity of each step of this algorithm is bounded by a polynomial in the length and the number of strands of $x$, with one exception: the second step of this algorithm (conjugating $x$ to $y\in SC^{[N]}(x)$) is not currently known to be doable in polynomial time, but it is conjectured to be so (c.f.\ Conjecture~\ref{C:no of slidings}).

The plan of the paper is as follows. In Section~\ref{S:reduction_curves}
we introduce the basic notions of reducible braids and reduction curves,
including the proof of Theorem~\ref{T:round_almostround}, and of
Theorem~\ref{T:main} in the case where the interior braid is trivial.
In Section~\ref{S:sliding_circuits} we switch to the algebraic viewpoint, explaining the notion of cyclic sliding and sliding circuits, and introducing the set $SC^{[m]}(x)$. Exploring the relation between sliding circuits and the powers of a braid, in Section~\ref{S:SC and powers}, we show how to compute one element in $SC^{[m]}(x)$ for every~$x$ and~$m$. We then proceed to study, in Section~\ref{S:SC and curves}, the relation between the reduction curves, on the geometric side, and the sets of sliding circuits, on the algebraic side. At the end of this section, we show that Theorem~\ref{T:main} holds in general if it holds for the special case of {\it rigid} braids.  Section~\ref{S:reducible rigid braids} treats the case of reducible rigid braids, finishing the proof of Theorem~\ref{T:main} by showing that if a rigid, reducible braid has some interior braid which is pseudo-Anosov, then its corresponding reduction curve is round.

{\bf Acknowledgements:} We wish to thank Volker Gebhardt for many useful discussions on this and related problems.


\section{Round and almost round reduction curves}\label{S:reduction_curves}

\subsection{Definitions and notations}

\subsubsection{Canonical reduction system and complexity of curves}

Let~$B_n$ be the braid group on~$n$ strands, where we fix as base points
the set $P_n=\{1,\ldots, n\}\in \mathbb C$. Every element $x\in B_n$ can
be seen as an automorphism of $D_n=D^2\backslash P_n$, where~$D^2$
denotes the disk in~$\mathbb C$ with diameter $[0,n+1]$. Therefore~$x$
induces an action on the isotopy classes of 1-manifolds in~$D_n$.

We will consider the action of braids on isotopy classes of simple curves {\it from the right}. That is, we will denote the isotopy class of a
simple curve~$\mathcal C$ by~$[\mathcal C]$, and we will write
$[\mathcal C]^x$, meaning the isotopy class of the curve
obtained from~$\mathcal C$ after applying~$x$ considered as an
automorphism of the $n$-times punctured disk. By abuse of vocabulary, we shall often say ``curves'' when we really mean ``isotopy classes of curves''. However, we shall carefully distinguish the notations~$\mathcal C$ and~$[\mathcal C]$.

A simple closed curve~$\mathcal C$ in $D^2\backslash P_n$ is
said to be \emph{non-degenerate} if it encloses more than one and less
than~$n$ points of~$P_n$, and it is said to be \emph{round} if it is
homotopic to a geometric circle. It is clear that non-degeneracy and roundness are properties which depend only on the isotopy class of a curve, so we can naturally say that some isotopy  class $[\mathcal C]$ is non-degenerate, or is round. A braid $x\in B_n$ is said to be
\emph{reducible} if $[\mathcal C]^{(x^m)}=[\mathcal C]$, for some positive
integer~$m$ and some non-degenerate curve~$\mathcal C$. Such a curve $\mathcal C$ is said to be a {\it reduction curve} for $x$. We say that a reduction curve ${\mathcal C}$ is {\it essential} if every other reduction curve for $x$ can be isotoped to have empty intersection with $\mathcal C$~\cite{BLM}.

The set of isotopy classes of essential reduction curves of a braid~$x$ is called the
\emph{canonical reduction system} of~$x$, and is denoted~$CRS(x)$.
It is well known that $CRS(x)=\emptyset$ if and only if~$x$ is either
periodic or pseudo-Anosov~\cite{BLM}. In other words, $CRS(x)\neq \emptyset$ if and
only if~$x$ is reducible and non-periodic. Since it is very easy to
determine whether a given braid $x\in B_n$ is periodic (it suffices to
check if either~$x^{n-1}$ or~$x^{n}$ is equal to a power of the half twist~$\Delta$),
the question of determining the geometric type of a braid reduces to
the study of its canonical reduction system. We will then be interested
in reducible, non-periodic braids, and in their essential reduction curves.

We will say that a non-degenerate simple curve~$\mathcal C$
in~$D^2\backslash P_n$ is \emph{almost round} if there exists a
simple element~$s$ (a permutation braid) such that~$[\mathcal C]^s$ is
round. This is equivalent to saying that~$\mathcal C$ can be isotoped
in $D^2\backslash P_n$ to a curve whose projection to the real
line has exactly one local maximum and one local minimum.

There is an alternative characterization of almost round curves which
will also allow us to introduce a notion of \emph{complexity} of a
simple closed curve in the punctured disc. Notice that a
curve~$[\mathcal C]$ can always be transformed into a round curve by a
suitable automorphism of the punctured disc, that is, by a suitable
braid~$y$. Since the full twist~$\Delta^2$ preserves any given curve,
it follows that~$\Delta^{2k}\beta$ also transforms~$[\mathcal C]$ into
a round curve, for every integer~$k$. Hence we can assume that~$y$
is a positive braid, as every braid becomes positive after multiplication
by a sufficiently high power of~$\Delta^2$. It is shown in~\cite{LL} that given a family $\mathcal F$ of mutually disjoint simple closed curves in $D^2\backslash P_n$, there is a {\it unique} positive braid $y\in B_n$ such that $[\mathcal F]^y$ is a family of round curves, and such that~$y$ has minimal length among all positive braids satisfying this property (actually $y$ is a prefix of any other positive braid satisfying this property). This braid~$y$ is called
the {\it minimal standardizer} of~$\mathcal F$. If~$\mathcal F$ consists of a single curve~$\mathcal C$, we will call~$y$ the {\it minimal standardizer} of $\mathcal C$.

Now recall that the \emph{simple braids} (or permutation braids) are those positive braids
for which every pair of strands cross at most once, and that~$\ell(x)$,
the canonical length of a braid~$x$, is the minimal number of simple factors
into which~$x$ can be decomposed, not counting factors equal to the
half twist~$\Delta$ -- see also Section~\ref{S:sliding_circuits}.
Alternatively, the canonical length $\ell(x)$ is the number of
factors different from~$\Delta$ in the left normal form of~$x$.

\begin{definition}
Given a simple closed curve~$\mathcal C$ in the punctured disc, we
define the \emph{complexity} of~$\mathcal C$ to be the canonical length of the minimal standardizer of $\mathcal C$.
\end{definition}

In other words, the complexity of $\mathcal C$ is the smallest
possible canonical length of a positive braid sending~$[\mathcal C]$
to a round curve.
Notice that this definition could be equivalently expressed the
other way around: the complexity of~$\mathcal C$ is the smallest possible
canonical length of a positive braid sending a round curve to~$[\mathcal C]$.

The curves of complexity 0 are the round curves, and the curves of
complexity~1 are those which become round by the action of a simple
element: these are precisely the almost round, not round curves.

\subsubsection{Decomposition of a braid along a family of curves}

Reduction curves allow us to decompose a braid into simpler braids.
In fact, several procedures for specifying such a decomposition are
conceivable, but we shall
use the procedure given in~\cite{G2009}, which we briefly explain now.

Let $x\in B_n$, and let $\mathcal F$ be a family of disjoint simple closed curves in $D^2\backslash P_n$. 
Let $y$ be the minimal standardizer of $\mathcal F$, and let $\widehat x = y^{-1}xy =: x^y$ and $\widehat{\mathcal F}=\mathcal F^y$. Notice that if $x$ preserves $[\mathcal F]$, then $\widehat x$ preserves $[\widehat{\mathcal F}]=[\mathcal F]^{y}$, which is a family of round curves.
However, even if $x$ does not preserve $[\mathcal F]$, it can still happen that $\widehat x$ sends $[\widehat{\mathcal F}]=[\mathcal F]^{y}$ to a family of round curves (not necessarily~$[\widehat{\mathcal F}]$ itself). In this case we can define for every curve $\mathcal C\in \mathcal F\cup \{\partial(D^2)\}$, a braid $\beta_{[\mathcal C\in \mathcal F]}$, called the \emph{component of $x$ associated to $\mathcal C$ in} $\mathcal F$, as follows.

For every subset $I\subset \{1,\ldots n\}$, we can define the {\it subbraid} $(\widehat x)_{I}$ to be the braid on $\#(I)$ strands obtained from $\widehat x$ by keeping only those strands which start at~$I$.
Notice that this yields a well-defined element of~$B_{\#(I)}$, even if
the strands starting at $I$ do not end at $I$ -- we just require the strands of $(\widehat x)_I$ to cross in the same way as the strands in $\widehat x$ starting at $I$, for details see~\cite{G2009}.

Now given a curve $\mathcal C\in \mathcal F\cup \{\partial(D^2)\}$, let $X_{\mathcal C}$ be the only connected component of $D^2\backslash \mathcal F$ which is enclosed by $\mathcal C$, and such that $\mathcal C\subset \overline{X_{\mathcal C}}$. Then define $D_{\mathcal C}= X_{\mathcal C}\cup \mathcal C$, which is homeomorphic to a punctured disc. 
Notice that $\overline{D_{\mathcal C}}\backslash D_{\mathcal C}$ is a family of points and curves, namely the outermost curves enclosed by $\mathcal C$, and the points which are enclosed by $\mathcal C$ but not enclosed by the mentioned curves.

Similarly, we let $X_{\widehat{\mathcal C}}$ be the only connected component of $D^2\backslash \widehat{\mathcal F}$ which is enclosed by $\widehat{\mathcal C}$, and such that $\widehat{\mathcal C}\subset \overline{X_{\widehat{\mathcal C}}}$. Then define $D_{\widehat{\mathcal C}}= X_{\widehat{\mathcal C}}\cup \widehat{\mathcal C}$. This is a closed round disk with some points and some closed round disks removed from its interior.
Also, $\overline{D_{\widehat{\mathcal C}}}\backslash D_{\widehat{\mathcal C}}$ is a family of points and round curves.


\begin{definition}{\rm \cite{G2009}}
Let $x\in B_n$ and let $\mathcal F$ be a family of disjoint simple closed curves in $D^2\backslash P_n$, whose minimal standardizer is $y$. Let $\widehat x = y^{-1}x y$, and suppose that $\widehat x$ sends $[\widehat{\mathcal F}]=[\mathcal F]^{y}$ to a family of round curves. Let $\mathcal C\in \mathcal F\cup \{\partial(D^2)\}$.
Let $I\subset \{1,\ldots,n\}$ consist of the indices of\vspace{-1mm}
\begin{itemize}
\item those punctures that appear in $\overline{D_{\widehat{\mathcal C}}}\backslash D_{\widehat{\mathcal C}}$, and\vspace{-1mm}
\item for each curve in $\overline{D_{\widehat{\mathcal C}}}\backslash D_{\widehat{\mathcal C}}$, exactly one puncture chosen arbitrarily among the punctures enclosed by that curve.\vspace{-1mm}
\end{itemize}
Then we define $x_{[\mathcal C\in \mathcal F]}$, the component of $x$ associated to $\mathcal C$ in $\mathcal F$, as the subbraid $(\widehat x)_{I}$.  If $\mathcal F=CRS(x)$, the mentioned component is just denoted $x_{\mathcal C}$.
\end{definition}

We remark that $x_{\partial(D^2)}$ is usually called the \emph{external braid} associated to $x$, and is denoted $x^{ext}$.



\subsection{Canonical reduction curves of reducible, positive braids
with trivial interior braids are either round or almost round}
\label{SS:CRSAlmostRound}

The aim of this section is to prove the following result:

\begin{proposition}\label{P:trivial interior -> almost round}
If $\mathcal C$ is an essential reduction curve for a positive braid $x$, with $[\mathcal C]^x=[\mathcal C]$, and the strands of $x$ enclosed by $\mathcal C$ do not cross each other, then $\mathcal C$ is either round or almost round.
\end{proposition}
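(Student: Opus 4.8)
The plan is to analyze the curve $\mathcal C$ by looking at how it intersects the ``ladders'' between consecutive pairs of punctures, exploiting the positivity of $x$ together with the hypothesis that the interior strands do not cross. First I would put $\mathcal C$ in a minimal position with respect to the vertical lines through the punctures, so that its complexity is realized, i.e.\ so that the number of local maxima (equivalently local minima) of the projection of $\mathcal C$ to the real axis equals the complexity of $\mathcal C$ plus one; our goal is to show this number is at most one. Let $y$ be the minimal standardizer of $\mathcal C$, so that $[\mathcal C]^y$ is round and $\ell(y)$ is the complexity of $\mathcal C$; we want $\ell(y)\leqslant 1$.

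The key step is to use the invariance $[\mathcal C]^x = [\mathcal C]$ and positivity of $x$. Since $x$ is positive and preserves $[\mathcal C]$, and since $y$ is the minimal standardizer of $\mathcal C$, I would study the positive braid $x^y = y^{-1}xy$: it preserves the round family $[\mathcal C]^y$, so by \cite{BGN} (round curves are preserved under cycling/decycling, and more basically any positive braid preserving a round curve does so ``visibly'') $x^y$ decomposes along $[\mathcal C]^y$ into an exterior braid and an interior braid, the latter being conjugate to the subbraid of $x$ on the strands enclosed by $\mathcal C$ — which is trivial by hypothesis. So the strands of $x^y$ enclosed by the round curve $[\mathcal C]^y$ are parallel (do not cross). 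Now I would argue about minimality of $y$: if $\ell(y)\geqslant 2$, write the left normal form $y = s_1 s_2\cdots s_\ell$ and consider the curve $\mathcal C' = \mathcal C^{s_1}$, which has complexity $\ell-1$. The braid $x^{s_1}$ is still positive (since $s_1$ is simple and $\mathcal C$'s minimal standardizer begins with $s_1$, one checks $s_1^{-1} x s_1$ remains positive — this uses that the strands not separated by the first ``fold'' of $\mathcal C$ behave well), preserves $[\mathcal C']$, and still has non-crossing interior strands. Iterating, we could reduce to $\ell(y)=0$, i.e.\ $\mathcal C$ round, contradicting nothing — so the real content is to show that the \emph{first} reduction step is forced, i.e.\ that a curve of complexity $\geqslant 2$ cannot be invariant under a positive braid with trivial interior braid. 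Here I would extract a combinatorial obstruction: track the two strands that wrap around the ``innermost fold'' of $\mathcal C$ (the pair of punctures at a local max of $\mathcal C$); invariance of $[\mathcal C]$ under the positive braid $x$ forces these two strands to cross in $x$, or forces some interior strand to cross another, contradicting the hypothesis.

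Concretely, I expect the argument to run as follows. Suppose $\mathcal C$ has complexity $c\geqslant 2$. Realize $\mathcal C$ in minimal position; it has $c+1 \geqslant 3$ local maxima of the $x$-coordinate. Look at the innermost arc of $\mathcal C$ around a local maximum: it separates a single puncture (or a small group of punctures, all interior to $\mathcal C$) sitting to the right of everything else enclosed by $\mathcal C$. Since $x$ is positive, as we read $x$ from top to bottom the strands only move by positive crossings; tracking where the rightmost interior puncture of $\mathcal C$ must go in order for $[\mathcal C]^x = [\mathcal C]$ to hold, I would show it must either pass over another interior strand (a crossing among interior strands — forbidden) or the fold must ``unwind'', which forces $c$ to drop — contradicting minimality of the position. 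The cleanest way to make this rigorous is probably via the \emph{minimal standardizer}: $x^y$ positive and preserving a round curve with trivial interior braid means, by the structure theory of \cite{BGN, LL}, that $x^y = \delta \cdot (\text{exterior part})$ where the interior factor of every canonical factor is trivial; pulling back by $y$ and using that $y$ is a \emph{prefix} of $xy$ (since $xy$ also standardizes $\mathcal C$, as $x$ fixes $[\mathcal C]$, and $y$ is minimal), one gets $\inf$/$\sup$ constraints forcing $\ell(y)\leqslant 1$.

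The main obstacle will be the last step: turning ``$x$ positive, fixes $[\mathcal C]$, trivial interior braid'' into the inequality ``complexity of $\mathcal C$ is $\leqslant 1$''. The subtlety is that $x$ need not be a simple braid, so one cannot just read off the picture in a single ladder; one must control the cumulative effect of all canonical factors. I expect the right tool is the observation that $xy$ is a positive braid which also standardizes $\mathcal C$, hence $y \preccurlyeq xy$ (left-divisibility), so $y\inv(xy) = x^y\cdot$ — wait, more precisely $y$ divides $xy$ on the left, giving $\sup(y)\leqslant \sup(xy)$ and, combined with positivity of $x$ and triviality of the interior braid of $x^y$, a bound on how much ``twisting'' $y$ can contain. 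Making this quantitative — showing the twisting is at most one canonical factor — is where the real work lies, and I would do it by a careful analysis of the left normal form of $x^y$ restricted to the strands adjacent to the round curves $[\mathcal C]^y$.
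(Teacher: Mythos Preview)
Your proposal has a genuine gap: you never use the hypothesis that $\mathcal C$ is \emph{essential}, and without it the statement is simply false. For instance, the identity braid is positive, fixes every curve, and has trivial interior braids everywhere, yet curves of arbitrary complexity are invariant. Your algebraic framework (minimal standardizer $y$, positivity of $y^{-1}xy$, the prefix relation $y\preccurlyeq xy$) is all correct, but none of it distinguishes an essential curve from a merely invariant one, so it cannot possibly force $\ell(y)\leqslant 1$. Your ``combinatorial obstruction'' paragraph about the innermost fold is where such an argument would have to live, but the claim that positivity of $x$ forces either an interior crossing or an unwinding of the fold is not true as stated: a complexity-two curve can perfectly well be carried to itself by a positive braid with trivial interior, and what goes wrong is only that some \emph{other} invariant curve must then intersect $\mathcal C$.

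The paper's proof exploits exactly this. It represents $\mathcal C$ (assumed of complexity two) as the boundary of a neighbourhood of a \emph{descending invariant arc} $\alpha$ through the interior punctures, using a rounded tangent-direction function $\tau_\alpha$ taking only the values $0$ and $1$; Lemma~\ref{L:maxincreases} shows that positivity of $x$ can only push $\tau$ up, so invariance pins $\tau$ to $\{0,1\}$ along the whole orbit. It then introduces the notion of exterior punctures being \emph{left-} or \emph{right-blocked} by sub-arcs $\alpha'\subset\alpha$, and in Lemmas~\ref{L:NoFarOutBlocked} and~\ref{L:NothingBlocked} shows that whenever such blocked punctures exist one can build an $x$-invariant curve $\mathcal C'$ (the boundary of a neighbourhood of a certain region carved out by $\alpha'$ and a vertical line) that intersects $\mathcal C$ essentially. \emph{This} is where essentiality enters: $\mathcal C'$ being $x$-invariant but intersecting $\mathcal C$ contradicts $\mathcal C\in CRS(x)$. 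The final assembly shows that a complexity-two curve must have a blocked exterior puncture somewhere, triggering the contradiction. If you want to salvage your approach, you will need some step that produces an auxiliary reduction curve meeting $\mathcal C$; the Garside-theoretic bookkeeping with $y$ and $x^y$ alone will not get you there.
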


In order to show this result, it suffices to prove that such a curve~$\mathcal C$ cannot be of complexity two, i.e., it cannot be the result of a round curve after the action of a braid of canonical length two, without being round or almost round.

Our first aim is to understand what a curve of complexity two looks
like (for detailed discussion of more general questions see~\cite{W}).
We shall first study smooth arcs
$\alpha\co I \to D^2$ in the disk~$D^2$ defined on the
unit interval~$I=[0,1]$; we shall restrict our attention to smooth
arcs~$\alpha$ which start and end in puncture points, which may also
traverse some puncture points, but whose tangent direction is
horizontal and pointing to the right, at every puncture point. For brevity, we shall call them
\emph{arcs traversing some puncture points horizontally}.

When studying diffeotopy classes of such arcs, we shall always mean
diffeotopies through families of arcs which are all supposed to traverse
the puncture points horizontally.
We shall say that a simple closed curve or an arc traversing some puncture
points horizontally is \emph{reduced} if it
has the minimal possible number of intersections with the horizontal
line, and also the minimal possible number of vertical tangencies
in its diffeotopy class.

The action of the braid group on the set of diffeotopy classes of arcs
traversing some puncture points horizontally, specifically of a braid $x$
on an arc~$\alpha$, is defined as follows:
$x$~induces a puncture dance, which in turn can be extended to a
diffeotopy of~$\alpha$ in such a way that at every moment the
intersection of the arc with the punctures is horizontal.
At the end of the dance we obtain a new arc traversing some puncture
points horizontally, which is well-defined up to diffeotopy.
This is~$\alpha^x$.

For an arc~$\alpha$ traversing some puncture points horizontally,
we define the \emph{tangent direction function}
$t_\alpha\co I\to \R / 2\Z$ as the angle of the tangent direction
of~$\alpha$ against the horizontal, divided by~$-\pi$. In particular,
if the arc goes straight to the right in~$\alpha(t)$, then
$t_\alpha(t)=0+2\Z$, if it goes straight down then
$t_\alpha(t)=\frac{1}{2}+2\Z$, and if it goes to the left then
$t_\alpha(t)=1+2\Z$.

For every arc traversing some puncture points horizontally, we have a
unique lifting of the function~$t_\alpha$ to a function
$\wtil{t}_\alpha\co I\to \R$ with $\wtil{t}_\alpha(0)=0$.
Finally, if $r\co \R\to\Z$ denotes the rounding function, which
sends every real number to the nearest integer (rounding \emph{down}
$n+\frac{1}{2}$), then we define the function
$$
\tau_\alpha\co I\to \Z, \ t\mapsto r\circ\wtil{t}_\alpha(t)
$$
which one might call the rounded lifted tangent direction function.

Notice that, if $\alpha$ is an arc such that $\tau_\alpha$ takes the
value~$0$ in a neighbourhood of the points where the arc traverses a
puncture, then the same is true for its image~$\alpha^x$ under the
action of any braid.

In order to be able to characterize reduction curves of complexity zero,
one, and two, we give now a detailed
description of the puncture dance associated to a positive permutation
braid. In a first step, the punctures make a small vertical movement,
with the puncture in position $k\in\Z$ moving to position
$k-k\cdot\epsilon\cdot i \in\C$, for some small $\epsilon>0$. In a
second step, the punctures make a horizontal movement, permuting their
$\R$-coordinates. In a third step, the punctures make again a small
vertical movement, lining them back up on the real line.

Now, reduction curves~${\mathcal C}$ of complexity zero can be characterized
as curves enclosing an arc which lies entirely in the real line, and
which traverses all the punctures in the interior of~${\mathcal C}$. Notice that $\mathcal C$ can be seen as the boundary of a regular neighborhood of this arc.

Suppose now that a curve ${\mathcal C}$ has complexity one. Then it is obtained from a round curve ${\mathcal C}_0$ by the action of a simple braid $s$. We can assume that the punctures enclosed by $\mathcal C_0$ (which are consecutive) do not cross in $s$, as those crossings could be removed from $s$ without modifying its action on $\mathcal C_0$. Hence, from the above description of positive permutation braids, we see that reduction curves~${\mathcal C}$ of complexity one can be characterized as follows: there exists a smooth arc~$\alpha$ disjoint from~${\mathcal C}$, traversing all the punctures in the interior component of $D^2\backsl {\mathcal C}$ horizontally such that~$\tau_\alpha$ is the constant function~$0$. (We are going to say such an arc is \emph{almost horizontal}.)

The action by a positive permutation braid
transforms an arc~$\alpha$ with $\tau_\alpha\equiv 0$ into an
arc~$\alpha'$ which, after reduction,
has the following property: by an isotopy of~$D^2$ that moves
the~$n$ puncture points only in the vertical direction up or down,
$\alpha'$ can be transformed into an arc whose imaginary coordinate is
monotonically decreasing.
Therefore, reduction curves~${\mathcal C}$ of complexity two can be
characterized as follows: there exists a smooth arc~$\alpha'$ disjoint
from~${\mathcal C}$ but traversing horizontally all the punctures in the
interior component of~$D^2\backsl {\mathcal C}$, such that~$\tau_{\alpha'}$
only takes the values~$0$ and~$1$ (for a more detailed proof see~\cite{W}).

One important property is that if a braid $x$ preserves a curve $\mathcal C$ of complexity 2, and the strands inside $\mathcal C$ do not cross in $x$, then the mentioned arc is invariant by $x$:

\begin{lemma}\label{L:invariant_arc}
Let $x\in B_n$ and let $\mathcal C$ be a curve such that $[\mathcal C]^x=[\mathcal C]$. Suppose that the strands enclosed by $\mathcal C$ do not cross in $x$. Let $\alpha$ by an arc traversing horizontally some punctures enclosed by~${\mathcal C}$. Then $\alpha^x=\alpha$.
\end{lemma}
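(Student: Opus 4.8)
The plan is to represent $x$ by a homeomorphism that is the identity on everything enclosed by $\mathcal C$, and then simply read off its effect on $\alpha$. Write $D$ for the closed sub-disc of $D^2$ bounded by $\mathcal C$ that contains the punctures enclosed by $\mathcal C$. In the situations where this lemma is applied (the arcs coming from the characterizations of curves of complexity $\leqslant 2$), $\alpha$ is disjoint from $\mathcal C$ and hence lies in the interior of $D$; I will assume this.

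Since $[\mathcal C]^x=[\mathcal C]$, the braid $x$ is represented by some homeomorphism $\varphi$ of $D_n$ with $\varphi(\mathcal C)=\mathcal C$. As $\mathcal C$ is non-degenerate it bounds an annular neighbourhood disjoint from $P_n$, and composing $\varphi$ with a homeomorphism supported in this puncture-free collar --- which does not change the mapping class --- we may assume $\varphi$ fixes $\mathcal C$ pointwise. Then $\varphi|_D$ is a homeomorphism of $D$ fixing $\partial D$ pointwise, and its isotopy class rel $\partial D$ is exactly the braid formed by the strands of $x$ enclosed by $\mathcal C$; by hypothesis this braid is trivial, so $\varphi|_D$ is isotopic rel $\partial D$ to the identity. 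Composing $\varphi$ with the extension by the identity of $(\varphi|_D)^{-1}$ --- a homeomorphism supported in $D$ whose restriction to $D$ is isotopic rel $\partial D$ to the identity, hence representing $1\in B_n$ --- we obtain a homeomorphism, still called $\varphi$, which represents $x$ and equals the identity on all of $\overline{D}$. In particular $\varphi(\alpha)=\alpha$ verbatim, including the tangent directions at the traversed punctures.

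It remains to check that the action $\alpha\mapsto\alpha^x$ can be computed from this $\varphi$; equivalently, that the part of $x$ supported away from $D$ acts trivially on diffeotopy classes of arcs contained in $D$ which traverse the punctures horizontally, so that this action depends only on the interior braid. Intuitively this holds because one may realise the puncture dance of $x$ so that it transports the sub-disc $D$ as a block, with no internal rearrangement (possible precisely because the interior braid is trivial), and then extend it by carrying $\alpha$ along inside the moving copy of $D$: at every instant the arc meets the moving punctures horizontally, and at the end of the dance $\alpha$ has returned to itself. Making this precise --- reconciling the ``dance plus horizontality-preserving extension'' definition of the action with the homeomorphism $\varphi$ constructed above --- is the only delicate point; the construction of $\varphi$ itself is routine, and the careful treatment of these horizontally-traversing arcs and the braid action on them is in~\cite{W}. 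Granting this, $\alpha^x=\varphi(\alpha)=\alpha$.
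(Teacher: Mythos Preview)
Your argument is correct and follows the same core idea as the paper's proof: represent~$x$ by a homeomorphism that is the identity on the sub-disc bounded by~$\mathcal C$, and conclude that~$\alpha$ is fixed. The difference is that the paper first conjugates by a standardizer~$y$ sending~$\mathcal C$ to a round curve~$\mathcal C_0$, works with $z=y^{-1}xy$ and the arc~$\alpha^y$ inside~$\mathcal C_0$, and then conjugates back; you instead work directly with~$\mathcal C$. The paper's detour through a round curve makes the delicate point you flag --- that a braid represented by a homeomorphism trivial inside the disc really does act trivially on the diffeotopy class of an interior horizontally-traversing arc --- somewhat more transparent, since for a round~$\mathcal C_0$ with trivial interior braid one can realise the puncture dance so that the interior punctures literally never move and the extension is the identity on the round sub-disc throughout. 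Your direct approach is shorter but shifts that justification onto the paragraph you label as the only delicate point; the paper's proof in fact glosses over the same step with a single sentence, so neither version is more rigorous than the other on this issue.
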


\begin{proof}
Let $\mathcal C_0$ be a round curve and let $y\in B_n$ be such that $[\mathcal C]^y=[\mathcal C_0]$.  Consider the braid $z=y^{-1}xy$, and the arc $\alpha^y$. Notice that $[\mathcal C_0]^z=[\mathcal C_0]^{y^{-1}xy} = [\mathcal C]^{xy} = [\mathcal C]^{y}= [\mathcal C_0]$. Hence $z$ preserves the round curve $\mathcal C_0$. Moreover, as the punctures enclosed by $\mathcal C$ do not cross in $y$, we can find a representative of $z$ in which the punctures enclosed by $\mathcal C_0$ do not cross. This implies that $z$ can be represented by a homeomorphism of the punctured disc whose restriction to the component enclosed by $\mathcal C_0$ is trivial. As $\alpha^y$ is a curve enclosed by $\mathcal C_0$, one has $(\alpha^y)^z=\alpha^y$ and then $\alpha^x = (\alpha^{yz})^{y^{-1}}= (\alpha^y)^{y^{-1}} = \alpha$, as we wanted to show.
\end{proof}

We saw above that a curve $\mathcal C$ of complexity 2 admit a smooth arc~$\alpha'$ disjoint
from~${\mathcal C}$ but traversing horizontally all the punctures in the
interior component of~$D^2\backsl {\mathcal C}$, such that~$\tau_{\alpha'}$
only takes the values~$0$ and~$1$. If $x$ is a braid preserving $\mathcal C$ in which the strands enclosed by $\mathcal C$ do not cross, the above lemma shows that the smooth arc $\alpha'$ is preserved by $x$. We shall call such an arc a \emph{descending invariant arc}.
Notice that $\mathcal C$ is the boundary of a regular neighborhood of $\alpha'$.

\begin{lemma}\label{L:maxincreases}
If~$x$ is a positive braid, if~$\alpha$ is an arc traversing
horizontally some puncture points, and if~$\alpha^x$ is its
reduced image under the action of~$x$, then
$$
\max_{t\in I}\,\tau_{\alpha^x}(t) \geqslant
\max_{t\in I}\,\tau_\alpha(t)
\text{ \ \ and \ \ }
\min_{t\in I}\,\tau_{\alpha^x}(t) \geqslant
\min_{t\in I}\,\tau_\alpha(t)
$$
\end{lemma}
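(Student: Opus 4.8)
The plan is to analyze how the function $\tau_\alpha$ behaves under multiplication by a single positive permutation braid, and then to induct on the canonical length of $x$. Since any positive braid $x$ factors as a product $x = s_1 s_2 \cdots s_k$ of positive permutation braids, and since the action on arcs (hence on the associated tangent direction functions) is by composition, it suffices to prove the inequality in the case where $x = s$ is a single positive permutation braid; the general statement then follows by applying this $k$ times, noting that a max or min that is non-decreasing at each step is non-decreasing overall. So from now on I would fix a positive permutation braid $s$ and an arc $\alpha$ traversing some punctures horizontally, and try to show $\max_t \tau_{\alpha^s}(t) \geqslant \max_t \tau_\alpha(t)$ and likewise for the min.

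The key computation uses the explicit three-step description of the puncture dance of a positive permutation braid given in the excerpt: a small downward vertical wiggle (puncture $k$ moves to $k - k\epsilon i$), then a horizontal permutation of the real coordinates, then a small upward wiggle lining the punctures back up. First I would track what this does to $\widetilde t_\alpha$ (the continuous lift) rather than directly to $\tau_\alpha$. The crucial qualitative point is that the horizontal move of the punctures is monotone in a suitable sense: because the vertical pre-wiggle separates the punctures onto distinct horizontal levels with the leftmost lowest (or whichever convention makes the braid \emph{positive}), the strands can be realized so that all the crossings happen in a ``consistent'' rotational direction, and dragging the arc $\alpha$ along this dance can only add tangent-direction winding of one sign — never the other — except possibly near the endpoints, where the arc is pinned horizontally at the punctures. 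Consequently, while $\widetilde t_\alpha$ may change, the change is by a function that is, up to a controlled correction at the traversed punctures, one-signed; after applying the rounding function $r$ and passing to the reduced representative $\alpha^s$, one extracts that the maximum value attained by $\tau$ cannot decrease, and the same argument run with orientations reversed gives that the minimum cannot decrease either. (These two halves are not symmetric a priori because ``positive'' breaks the up/down symmetry, which is exactly why both inequalities point the same direction $\geqslant$ rather than one being $\geqslant$ and the other $\leqslant$.) I would make this precise by choosing a concrete reduced representative: isotope $\alpha$ so that near each traversed puncture $\tau_\alpha \equiv 0$ (possible, as remarked in the excerpt, and preserved under the braid action), realize $\alpha^s$ by the literal puncture dance, and then count local maxima/minima of $\widetilde t$ before and after, comparing maxima over the compact interval $I$.

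The main obstacle I expect is the reduction step: the inequality is asserted for the \emph{reduced} image $\alpha^s$, but the arc one literally obtains from the puncture dance need not be reduced — it may have superfluous vertical tangencies or superfluous intersections with the horizontal line — and reducing it can in principle lower $\max_t \widetilde t_\alpha$. So the heart of the argument is to show that the reduction moves (removing a bigon between the arc and the horizontal line, or cancelling a pair of adjacent vertical tangencies of opposite sense) never decrease $\max_t \tau$ or $\min_t \tau$. For a bigon-removal this is a local statement: the portion of the arc being pushed across the horizontal line has tangent direction values wedged between those of the two endpoints of the bigon, so removing it cannot eliminate an \emph{extreme} value of $\tau$ unless that extreme value was itself ``fake'' (attained only inside the bigon), and a short case-check on the lifted angles rules that out. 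I would isolate this as a sublemma — essentially that the reduced representative realizes the min and max of $\tau$ over all diffeotopic representatives — and then the monotonicity under a single permutation braid, combined with the initial factorization, completes the proof. A secondary technical nuisance is bookkeeping the $\R/2\Z$ versus $\R$ lift and the rounding convention (rounding $n+\tfrac12$ down) at the finitely many points where $\widetilde t_\alpha$ crosses a half-integer; I would handle this by perturbing $\alpha$ slightly to avoid half-integer values of $\widetilde t_\alpha$ except at isolated transverse crossings, where the convention is harmless.
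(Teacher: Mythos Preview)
Your approach is correct in spirit but takes a significantly more circuitous route than the paper. The paper's proof is essentially two lines: it factors $x$ into single Artin generators $\sigma_i$ (rather than permutation braids), and then observes that for one $\sigma_i$ the lifted tangent direction satisfies $\tilde t_{\alpha^{\sigma_i}}(t_0) \in \{\tilde t_\alpha(t_0),\, \tilde t_\alpha(t_0)+1\}$ at every parameter value~$t_0$. This pointwise statement is read off directly from a picture of the half-twist: the support of~$\sigma_i$ is a small disk around punctures~$i$ and~$i+1$, and inside that disk every tangent vector is rotated clockwise by at most~$\pi$, so $\tilde t$ increases by an amount in $[0,1]$; outside, nothing moves. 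Since rounding is monotone, $\tau$ itself increases pointwise by $0$ or~$1$, and both the max and min inequalities follow immediately.

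By contrast, factoring into full permutation braids forces you into the three-step puncture dance and then into the reduction analysis you outline. That analysis may well go through, but it is doing extra work: the single-generator case is local and transparent enough that the pointwise monotonicity of~$\tau$ makes your proposed sublemma (that bigon and tangency cancellations cannot destroy an extreme value of~$\tau$) essentially unnecessary, or at any rate collapses it to something visible from the picture. What you gain from your route is a more explicit confrontation with the reduction step, which the paper treats only implicitly through the pointwise claim and an accompanying figure; what the paper gains is brevity and a much cleaner inductive step. If you want to tighten your write-up, replace the permutation-braid factorization with the generator factorization and drop the puncture-dance description --- the half-integer and rounding-convention worries you raise at the end then become non-issues, since the change in $\tilde t$ at each point is confined to $[0,1]$.
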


\begin{proof}
It suffices to prove this result for $x=\sigma_i$, a single Artin
generator. It is an easy observation that for every~$t_0$ in~$I$,
we have $t_{\alpha^{\sigma_i}}(t_0)=t_\alpha(t_0)$ or
$t_{\alpha^{\sigma_i}}(t_0)=t_\alpha(t_0)+1$.
Some examples are given in Figure~\ref{F:genrlabel}.
\end{proof}

\begin{figure}[htb]
\centerline{\input{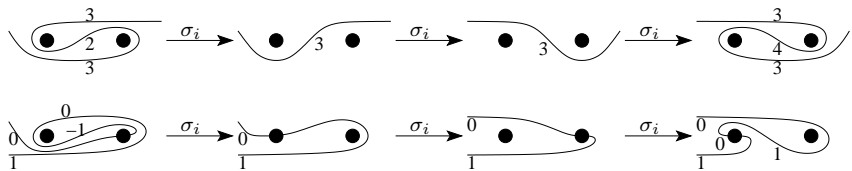}}
\vspace{-2mm}
\caption{The labels, which represent the values of the function~$t_\alpha$,
can grow under the action by a generator~$\sigma_i$, but never go down.}
\label{F:genrlabel}
\end{figure}

\begin{lemma}
If~$[\mathcal C]$ is an~$x$-invariant closed curve of complexity two,
where~$x$ is a positive braid, and the strands of $x$ enclosed by $\mathcal C$ do not cross, then for any prefix~$x'$ of~$x$ the curve $[\mathcal C]^{x'}$ is of complexity two.
\end{lemma}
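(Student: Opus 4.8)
The plan is to produce, for a given prefix $x'$ of $x$, a single arc traversing horizontally the punctures enclosed by $[\mathcal C]^{x'}$ whose rounded lifted tangent direction function $\tau$ takes only the values $0$ and $1$ but does attain the value $1$; by the characterizations of curves of complexity $0$, $1$ and $2$ recalled above, this forces $[\mathcal C]^{x'}$ to be of complexity exactly two.

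First I would use the hypotheses on $x$ and $\mathcal C$ to fix a good arc for $\mathcal C$ itself. Since $x$ is positive, $[\mathcal C]^{x}=[\mathcal C]$, $\mathcal C$ has complexity two, and the strands of $x$ enclosed by $\mathcal C$ do not cross, the definition of a descending invariant arc (via Lemma~\ref{L:invariant_arc}) provides a reduced descending invariant arc $\alpha$: an arc traversing horizontally exactly the punctures in the interior component of $D^2\backsl\mathcal C$, of which $\mathcal C$ is the boundary of a regular neighbourhood, with $\alpha^{x}=\alpha$ and with $\tau_\alpha$ taking exactly the two values $0$ and $1$ (the value $1$ is attained because the complexity of $\mathcal C$ is not $\leqslant 1$).

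Next, write $x=x'x''$ with $x'$ and $x''$ positive. Since $x$ is positive and the interior strands of $\mathcal C$ cross zero times in $x$, their subbraid in $x$ is trivial, hence so are their subbraids in $x'$ and in $x''$; that is, they cross zero times in each of $x'$ and $x''$. Let $\alpha'$ be the reduced image of $\alpha$ under $x'$; since $[\mathcal C]^{x'}$ is the boundary of a regular neighbourhood of $\alpha'$, the arc $\alpha'$ is, up to the relevant diffeotopies, the reduced spine arc of $[\mathcal C]^{x'}$, so the complexity of $[\mathcal C]^{x'}$ is governed by $\tau_{\alpha'}$. The core of the argument is then a squeeze obtained by applying Lemma~\ref{L:maxincreases} along both halves of the factorization $x=x'x''$. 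Acting by the positive braid $x'$ on $\alpha$ gives $\max_t\tau_{\alpha'}(t)\geqslant\max_t\tau_\alpha(t)=1$ and $\min_t\tau_{\alpha'}(t)\geqslant\min_t\tau_\alpha(t)=0$. Acting by the positive braid $x''$ on $\alpha'$, whose reduced image is $(\alpha^{x'})^{x''}=\alpha^{x}=\alpha$, gives $1=\max_t\tau_\alpha(t)\geqslant\max_t\tau_{\alpha'}(t)$ and $0=\min_t\tau_\alpha(t)\geqslant\min_t\tau_{\alpha'}(t)$. Hence $\max_t\tau_{\alpha'}(t)=1$ and $\min_t\tau_{\alpha'}(t)=0$, and since $\wtil{t}_{\alpha'}$ is continuous, $\tau_{\alpha'}$ takes only the values $0$ and $1$. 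By the characterization of complexity-two curves this already shows that $[\mathcal C]^{x'}$ has complexity at most $2$; and since $\tau_{\alpha'}$ in fact attains $1$, the arc $\alpha'$ is not almost horizontal, so $[\mathcal C]^{x'}$ is neither round nor almost round, i.e.\ its complexity is at least $2$. Therefore $[\mathcal C]^{x'}$ has complexity exactly two.

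The step I expect to be the main obstacle is the careful handling of reductions in the two applications of Lemma~\ref{L:maxincreases}: that lemma compares $\tau$ of the \emph{reduced} image of an arc under a positive braid, so one must check that the reduced image of $\alpha'$ under $x''$ is genuinely the reduced arc $\alpha$ and not merely some arc diffeotopic to it --- which is exactly where $\alpha^{x}=\alpha$ and Lemma~\ref{L:invariant_arc} enter --- and that the resulting squeeze $\max_t\tau_\alpha(t)\geqslant\max_t\tau_{\alpha'}(t)\geqslant\max_t\tau_\alpha(t)$, and its analogue for the minimum, is therefore legitimate. A secondary point to pin down is the uniqueness, up to the relevant diffeotopies, of the reduced spine arc of a simple closed curve; this is what lets one identify $\alpha'$ with the reduced spine arc of $[\mathcal C]^{x'}$ and conclude, for the lower bound, that complexity at most $1$ would force $\tau_{\alpha'}\equiv 0$, contrary to what has just been shown.
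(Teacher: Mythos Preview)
Your proposal is correct and follows essentially the same argument as the paper: take the descending invariant arc~$\alpha$, use $\alpha^x=\alpha$ from Lemma~\ref{L:invariant_arc}, and squeeze $\max_t\tau_{\alpha^{x'}}$ and $\min_t\tau_{\alpha^{x'}}$ between the corresponding values for~$\alpha$ and~$\alpha^x$ via two applications of Lemma~\ref{L:maxincreases} along the factorization $x=x'x''$. The paper's proof is terser---it simply writes the double inequalities and concludes---whereas you have been more explicit about the factorization and have flagged the issues of reduction and of uniqueness of the spine arc for the lower bound; the paper leaves these implicit.
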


\begin{proof}
Let $\alpha$ be a descending invariant arc associated to $\mathcal C$. By Lemma~\ref{L:invariant_arc} we know that $\alpha^x =\alpha$. Now, the image of~$\tau_\alpha$ is equal to~$\{0,1\}$, so the same holds for the image of $\tau_{\alpha^x}$. Thus Lemma~\ref{L:maxincreases} implies that for any prefix~$x'$ of~$x$ one has
$$
  1= \max_{t\in I}\,\tau_{\alpha^x}(t) \geqslant \max_{t\in I}\,\tau_{\alpha^{x'}}(t) \geqslant \max_{t\in I}\,\tau_\alpha(t) =1
$$
and
$$
0=\min_{t\in I}\,\tau_{\alpha^x}(t) \geqslant \min_{t\in I}\,\tau_{\alpha^{x'}}(t) \geqslant \min_{t\in I}\,\tau_\alpha(t)=0.
$$
Hence the image of $\tau_{\alpha^{x'}}$ is also equal to~$\{0,1\}$. Therefore $[\mathcal C]^{x'}$ has complexity two.
\end{proof}

Let us introduce some more notation. We shall suppose that~$\alpha$ is
a descending invariant arc of some positive braid~$x$.
We suppose also that~$\alpha'\subset \alpha$ is a sub-arc whose two
extremities lie in two interior punctures.
We say an exterior puncture is \emph{left-blocked} by~$\alpha'$
if there is no smooth path starting at this puncture point, terminating
on the boundary of the disk, disjoint from the arc~$\alpha'$, and whose
tangent direction has always a negative real coordinate.
A \emph{right-blocked} puncture is defined symmetrically.
We define interior punctures to be both left and right blocked.
We shall call the two interior punctures at the two ends of the
arc~$\alpha'$ the \emph{extremal} (interior) punctures of~$\alpha'$.

The proof of Proposition~\ref{P:trivial interior -> almost round} will be
completed by proving that there are no blocked exterior punctures at all,
meaning that the curve~${\mathcal C}$ is of complexity~1. First we obtain
two partial results:

\begin{lemma}\label{L:NoFarOutBlocked} Let $\mathcal C$ be an essential reduction curve for a positive braid $x$. Suppose that~$\alpha'$ is a sub-arc of a descending invariant
arc of $x$. Then there cannot be any exterior punctures which are left-blocked by~$\alpha'$
and to the left of both extremal punctures of~$\alpha'$. Similarly,
there cannot be a right-blocked exterior puncture to the right of
both extremal punctures.
\end{lemma}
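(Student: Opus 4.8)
The plan is to argue by contradiction. Suppose there is an exterior puncture $p$ which is left-blocked by $\alpha'$ and which lies to the left of both extremal punctures of $\alpha'$; choose $p$ to be the leftmost such puncture. Since $\alpha$ is a \emph{descending} invariant arc (so $\tau_\alpha$ takes only the values $0$ and $1$) and $\alpha'\subset\alpha$ has both endpoints at interior punctures, being left-blocked means that every rightward-pointing escape path from $p$ to $\partial D^2$ must cross $\alpha'$. Geometrically, $\alpha'$ therefore forms a ``barrier'' that wraps around $p$ from the left; because $p$ is to the left of both extremal punctures, this barrier must dip below (or above) $p$ and come back, which forces $\alpha'$ — and hence $\alpha$ — to have a sub-arc with a full ``turn,'' i.e.\ a portion on which $\tau_\alpha$ would be pushed to a value $\leqslant -1$ or $\geqslant 2$ once we track the lifted tangent direction around this obstruction. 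I would make this precise by exhibiting, from the blocking configuration, a specific vertical half-line or escape arc from $p$ whose intersection pattern with $\alpha'$ witnesses a winding of $\alpha$ incompatible with $\mathrm{im}(\tau_\alpha)=\{0,1\}$.

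The key steps, in order, are: (1) set up the leftmost offending puncture $p$ and fix the escape-path language precisely (smooth path from $p$ to $\partial D^2$ with tangent direction of negative real part); (2) observe that because $p$ is to the left of both extremal punctures, the two endpoints of $\alpha'$ are ``out of the way,'' so the only thing blocking $p$ is the \emph{middle} of $\alpha'$, forcing $\alpha'$ to separate $p$ from the right edge of the disc in a way that requires $\alpha'$ to pass both above and below the horizontal level of $p$; (3) translate this separation statement into a statement about the reduced arc: a reduced arc disjoint from $\mathcal C$ that traverses all interior punctures and that achieves this blocking must contain a vertical tangency of ``the wrong sign'' or an extra intersection with the horizontal line, contradicting either reducedness or $\mathrm{im}(\tau_\alpha)\subseteq\{0,1\}$; (4) conclude, and note the right-blocked case is symmetric (reflect the disc). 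Throughout I would lean on the earlier characterization that a complexity-two curve is exactly the boundary of a regular neighbourhood of a descending invariant arc, and on Lemma~\ref{L:invariant_arc} to guarantee that $\alpha$ really is $x$-invariant so that the configuration is stable.

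The main obstacle I anticipate is step (3): turning the topological ``blocking'' hypothesis into a clean combinatorial contradiction with $\mathrm{im}(\tau_\alpha)=\{0,1\}$. The subtlety is that ``left-blocked and to the left of both extremal punctures'' is a global statement about the complement of $\alpha'$ in the disc, whereas $\tau_\alpha$ is a local derivative-type invariant along $\alpha$; bridging the two requires carefully choosing the escape path for $p$ and then doing a winding-number / intersection-parity bookkeeping along $\alpha'$ between the two extremal punctures. I expect one has to rule out the ``degenerate'' ways the arc could bend back without raising $\tau_\alpha$ — essentially showing that a reduced arc cannot fake a barrier around $p$ cheaply — and that this is where the regular-neighbourhood description of $\mathcal C$ and the reducedness hypothesis (minimal number of horizontal intersections and vertical tangencies) do the real work. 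The symmetric right-blocked case should then follow formally by the reflection $z\mapsto (n+1)-\bar z$ of $D^2$, which reverses the role of left and right while preserving the horizontal line and the notion of descending invariant arc.
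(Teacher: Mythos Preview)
Your approach has a genuine gap: you are trying to derive a contradiction purely from the shape constraint $\mathrm{im}(\tau_\alpha)\subseteq\{0,1\}$, and this cannot succeed. A descending arc \emph{can} left-block an exterior puncture lying to the left of both of its extremal punctures while keeping $\tau_\alpha\in\{0,1\}$ everywhere. Concretely, a monotone-in-height arc starting at an interior puncture, swinging leftward past some exterior puncture~$p$, and then swinging back rightward to end at another interior puncture (both endpoints to the right of~$p$) creates exactly the pocket that left-blocks~$p$; this is the configuration drawn in the paper's Figure~\ref{F:DescArcEx}(a). So your step~(3) --- turning the blocking hypothesis into a winding/tangency contradiction along~$\alpha$ --- is not just technically delicate, it is false as stated. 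No amount of ``reducedness'' bookkeeping will forbid this picture.

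What is missing from your plan are the two hypotheses you mention but never actually use: the positivity of~$x$ (together with the $x$-invariance of~$\alpha$) and the \emph{essentiality} of~$\mathcal C$. The paper's argument runs as follows. First, positivity forces the blocked configuration to be stable under the puncture dance of~$x$: the blocked exterior puncture cannot pass under the left extremal interior puncture (that would be a negative crossing) and cannot pass over it (that would push~$\tau$ up to~$2$, contradicting invariance of~$\alpha$ and Lemma~\ref{L:maxincreases}). Hence the set of such blocked punctures is $x$-invariant. Second, from this stable pocket one builds an $x$-invariant simple closed curve~$\mathcal C'$ (boundary of a regular neighbourhood of the pocket together with an initial segment of~$\alpha'$). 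Finally, $\mathcal C'$ necessarily intersects~$\mathcal C$ essentially, contradicting that~$\mathcal C$ is an \emph{essential} reduction curve. Your proposal never constructs a competing reduction curve and never invokes essentiality, so it cannot reach the contradiction.
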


\begin{proof} We shall prove the first sentence, the proof of the
second one is very similar. Moreover, we shall suppose that the starting
point of the arc~$\alpha'$ (which in the picture is ``higher''
than the end point) is to the \emph{left} of the terminal point, see
Figure~\ref{F:DescArcEx}(a).
The proof of the other case (where the starting point
of the arc~$\alpha'$ is to the \emph{right} of the end point,
Figure~\ref{F:DescArcEx}(b)) is
similar, one simply has to consider the positive braid
$\mbox{rev}(x)$, which is the image of $x$ under the anti-isomorphism $\mbox{rev}:\: B_n\rightarrow B_n$ which sends $\sigma_i$ to itself for every $i=1,\ldots,n-1$ (that is, $\mbox{rev}(x)$ is equal to $x$ written backwards).

We shall argue by contradiction: let us suppose that there is some
left-blocked puncture which is to the left of the left extremal
interior puncture (see Figure~\ref{F:DescArcEx}(a)). We observe that
the corresponding strands cannot cross in the braid~$x$ -- indeed,
if we think of the braid~$x$ as a dance of the punctures, then
during this dance the left-blocked puncture cannot move \emph{under}
the left extremal interior puncture, for this would require a negative
crossing, and it cannot move \emph{over} it, for this would turn the
curve~$\alpha'$ into a curve~$\alpha''$ which possesses some points where
the function~$t_{\alpha''}$ takes the value~$2$.
Thus the set of punctures which are left-blocked by~$\alpha'$ and which
lie to the left of both endpoints of~$\alpha'$ is stable during the whole
dance.

\begin{figure}[htb]
\centerline{\input{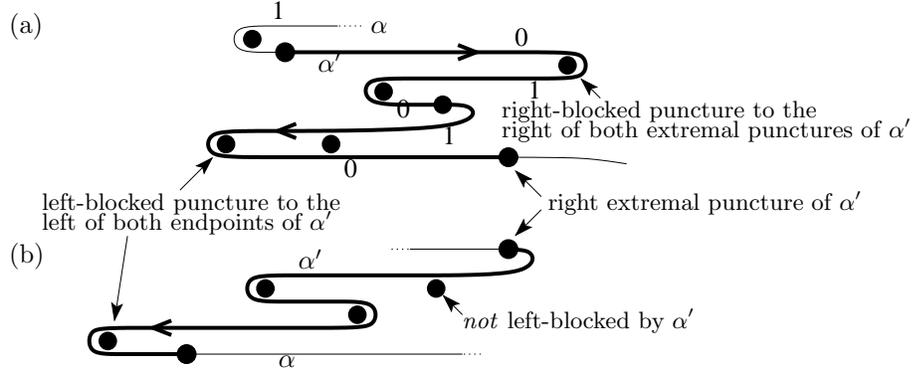}}
\vspace{-2mm}
\caption{(a) The starting point of~$\alpha'$ (the bold line segment)
is to the left of the end point. \ (b) Vice versa.}
\label{F:DescArcEx}
\end{figure}

Now the vertical line through the left extremal interior
puncture, together with the arc~$\alpha'$, cuts the disk into a number
of connected components, at least one of which contains some left-blocked
punctures to the left of the left extremal puncture. Let~$\Xi$ be the
union of all the components containing left-blocked punctures.
Let~$\Psi$ be the union of~$\Xi$ with an initial segment of~$\alpha'$
long enough to touch all the connected components of~$\Xi$, but not
all of~$\alpha'$. (So~$\Psi$ looks in general like some pearls on a
thread, see Figure~\ref{F:ConstrInv}(a).)
Let $N(\Psi)$ be a regular neighbourhood of~$\Psi$. We observe
that $N(\Psi)$ is preserved by the action of~$x$, and so is its
boundary, which we shall call~$\mathcal C'$. Moreover, $\mathcal C'$ intersects the
canonical reduction curve~$\mathcal C$ (which, we recall, was the boundary of a
regular neighbourhood of~$\alpha$) twice. This contradicts the definition
of a canonical reduction curve.
\end{proof}

\begin{figure}[htb]
\centerline{\input{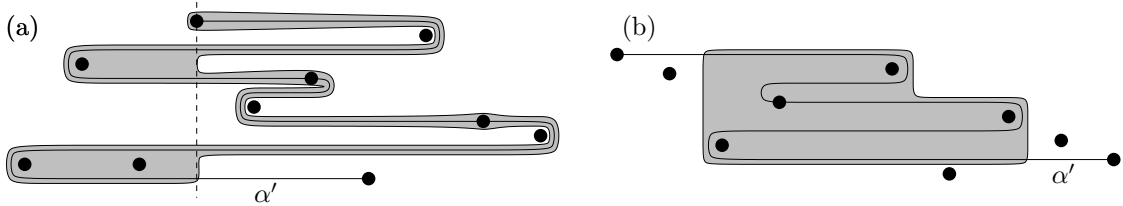}}
\vspace{-2mm}
\caption{Constructing invariant curves which intersect the curve~$c$:
(a) In the case where there is a left blocked puncture to the left of
both extremal punctures, and (b) in the other case.}
\label{F:ConstrInv}
\end{figure}

\begin{lemma}\label{L:NothingBlocked}
Let $\mathcal C$ be an essential reduction curve for a positive braid $x$. Suppose that~$\alpha'$ is a sub-arc of a descending invariant
arc $\alpha$ of $x$. Also suppose that $\alpha'$ does not traverse any interior punctures (except its two endpoints). Then there cannot be any exterior punctures blocked
by~$\alpha'$.
\end{lemma}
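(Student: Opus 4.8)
The plan is to argue by contradiction, assuming that $\alpha'$ — a sub-arc of a descending invariant arc $\alpha$ whose endpoints are interior punctures and which traverses no interior puncture in between — does block some exterior puncture. By Lemma~\ref{L:NoFarOutBlocked}, any left-blocked exterior puncture must lie \emph{between} the two extremal interior punctures of $\alpha'$ (in the real coordinate), and similarly for right-blocked punctures; so the only blocked exterior punctures sit strictly between the two endpoints of $\alpha'$. First I would set up notation: let $p$ and $q$ be the two extremal interior punctures of $\alpha'$, say with $p$ the higher endpoint, and let $E$ be the set of exterior punctures that are blocked by $\alpha'$ and lie (in real coordinate) between $p$ and $q$. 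The arc $\alpha'$ together with a vertical segment joining its endpoints to the real line cuts $D^2$ into regions, and I would identify the region(s) $\Xi$ that contain punctures of $E$, exactly as in the proof of Lemma~\ref{L:NoFarOutBlocked}.

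The key geometric point to establish is that the set $E$ of blocked punctures is preserved by the action of $x$. This should follow from the same reasoning as before: a blocked puncture cannot move \emph{under} an extremal puncture (that needs a negative crossing, impossible in a positive braid) nor \emph{over} it without forcing $t_{\alpha''}$ to reach the value $2$, contradicting that the image of $\tau_\alpha$ is $\{0,1\}$ (here I use that $\alpha^x=\alpha$ from Lemma~\ref{L:invariant_arc}, so $\alpha$ really is invariant under the whole puncture dance). Since $\alpha'$ traverses no interior punctures other than $p,q$, there is no further internal structure to complicate this argument. Then I would form $\Psi$ by taking $\Xi$ together with a sufficiently long initial sub-segment of $\alpha'$ to connect all the components of $\Xi$ but not all of $\alpha'$ (''pearls on a thread'' as in Figure~\ref{F:ConstrInv}), let $N(\Psi)$ be a regular neighbourhood, and set $\mathcal C'=\partial N(\Psi)$. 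Invariance of $E$ and of $\alpha$ under $x$ gives that $N(\Psi)$, hence $\mathcal C'$, is $x$-invariant.

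The contradiction then comes from intersection numbers: $\mathcal C'$ meets $\mathcal C$ (the boundary of a regular neighbourhood of $\alpha$) in exactly two points which cannot be removed by isotopy, because $\mathcal C'$ encloses some but not all of the punctures inside $\mathcal C$ while also enclosing some exterior punctures, so $[\mathcal C']$ and $[\mathcal C]$ fill a common subsurface nontrivially. An $x$-invariant curve essentially intersecting the essential reduction curve $\mathcal C$ contradicts the defining property of $CRS(x)$ (every reduction curve can be isotoped off an essential one). I expect the main obstacle to be the careful verification that $\mathcal C'$ and $\mathcal C$ genuinely have \emph{essential} geometric intersection number $2$ — i.e.\ that one cannot isotope them apart — which requires checking that $\Xi$ is neither entirely inside nor entirely outside $\mathcal C$ and that the initial segment of $\alpha'$ used in $\Psi$ really does cross out of the neighbourhood of $\alpha$; the positivity of $x$ and the restriction that $\alpha'$ has no internal interior punctures are exactly what make this bookkeeping manageable, and the case where the starting point of $\alpha'$ lies to the right of its endpoint is handled, as before, by passing to $\mathrm{rev}(x)$.
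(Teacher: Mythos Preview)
Your overall strategy is right, but you have transplanted the wrong construction and your invariance argument has a gap. The ``pearls on a thread'' device --- a vertical line through an extremal puncture, pockets $\Xi$ to one side, threaded by an initial segment of $\alpha'$ --- is Figure~\ref{F:ConstrInv}(a), designed for Lemma~\ref{L:NoFarOutBlocked}, where the blocked punctures lie to \emph{one side} of both extremal punctures. Here, by that very lemma, any blocked exterior punctures lie \emph{between} $p$ and $q$; there is no side for the pockets to open onto, the vertical segments through $p$ and $q$ are not determined by $\alpha'$ and hence not invariant, and an ``initial sub-segment of $\alpha'$'' has no reason to connect the relevant regions or to produce a boundary meeting $\mathcal C$ in only two points. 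The paper's construction for this lemma is instead Figure~\ref{F:ConstrInv}(b): the set of all points of $D^2$ having points of $\alpha'$ both above and below them. This region is determined by $\alpha'$ alone, and a regular neighbourhood of its closure is a disk (or union of disks) each of whose boundary components meets $\alpha'$, and hence $\mathcal C$, in two points.

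Your invariance argument for $E$ also recycles the wrong reasoning. The ``cannot move under (negative crossing) / cannot move over (forces $t=2$)'' argument is from Lemma~\ref{L:NoFarOutBlocked}, for a far-out puncture crossing an extremal one; here you need that a blocked puncture between $p$ and $q$ cannot \emph{leave} that interval, and the clean way to get this is simply to invoke Lemma~\ref{L:NoFarOutBlocked} itself (if it left, it would become far-out blocked, a contradiction). More importantly, you omit a step the paper relies on: the observation that if any blocked puncture exists there must in fact be a \emph{pair} --- a right-blocked puncture above a left-blocked one --- and that these two cannot cross each other during the dance, since such a crossing would push $\alpha'$ to complexity three. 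Without this you have not established that the blocked punctures remain in their respective pockets, so the invariance of your region is not proved.
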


\begin{proof} Again, we shall assume that the starting point of~$\alpha'$
is to the \emph{left} of the end point, with the other case being similar.
Lemma~\ref{L:NoFarOutBlocked} together with the hypothesis that $\alpha'$
does not traverse any interior puncture imply that any blocked punctures
would have to lie between the left and the right extremity of~$\alpha'$.
Supposing, for a contradiction, that such
blocked punctures exist, then there must be a pair of them, with a
right-blocked puncture above a left-blocked one (see
Figure~\ref{F:ConstrInv}(b)). Let us now look at the braid~$x$,
considered as a dance of the punctures.

We claim that the two punctures can never cross, and that they stay
between the two extremal punctures at all times. Indeed, a crossing between
the two punctures (while they both lie between the extremal punctures)
would transform the arc~$\alpha'$ into an arc of complexity~$3$.
Moreover, as soon as one of the two punctures quits the region
between the two extremal punctures, it becomes a left-blocked puncture
to the left of both extremal punctures, or a right-blocked puncture
to the right of both extremal punctures, which is impossible by
Lemma~\ref{L:NoFarOutBlocked}. This proves the claim.

Thus any punctures which, at any moment during the puncture dance, are
blocked by~$\alpha'$ and lie between the left and right extremal puncture
of~$\alpha'$, keep these properties throughout the puncture dance. This
helps us to construct an invariant curve in the following manner: we
take the set of all points of~$D^2$ which have points of~$\alpha'$ both
above and below them (see Figure~\ref{F:ConstrInv}(b)).
The punctures contained in this region are precisely those which are left or right blocked by $\alpha'$.  A regular neighbourhood of the closure of this set is a disk, or possibly a
disjoint union of some disks. The boundary of each disk intersects~$\alpha'$
in two points, and hence intersects~$\mathcal C$ in at least two points.
However, the boundary of the disk is $x$-invariant, so we have a
contradiction with the requirement that~$\mathcal C$ belongs to the
canonical reduction system.
\end{proof}

We are now ready to prove the main result in this section.

{\it Proof of Proposition~\ref{P:trivial interior -> almost round}.} (see
Figure~\ref{F:NoCxty2}). We recall that after an isotopy which moves punctures only
vertically, there is an $x$-invariant arc~$\alpha$ which contains all the interior
punctures (i.e.\ punctures inside the canonical reduction
curve~$\mathcal C$), which is monotonically decreasing in height.

\begin{figure}[htb]
\centerline{\input{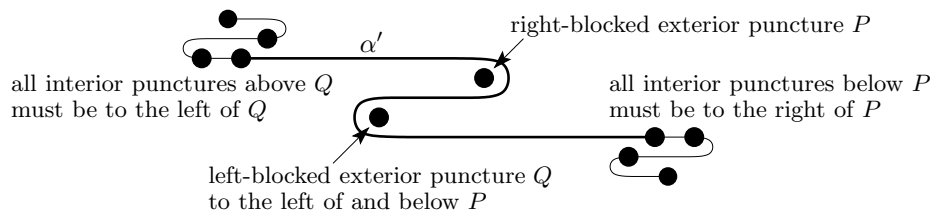}}
\vspace{-2mm}
\caption{The arc $\alpha$ and (bold) its subarc $\alpha'$.}
\label{F:NoCxty2}
\end{figure}

Let us suppose, for a contradiction that there is a blocked exterior
puncture~$P$ -- without loss of generality we suppose it is a
right-blocked one. From Lemma~\ref{L:NoFarOutBlocked} we know that either
all the interior punctures which lie \emph{above}~$P$ must lie to the right
of~$P$, or all the interior punctures \emph{below}~$P$ must lie to the
right of~$P$.
Again without loss of generality we suppose that the latter is the case.

On the other hand, $P$~is right-blocked, so there must be a left-blocked
puncture~$Q$ somewhere below and to the left of~$P$. By the previous
paragraph, this puncture must be exterior. Again by
Lemma~\ref{L:NoFarOutBlocked}, all the interior punctures above~$Q$
must lie to the left of~$Q$.

But now we have an arc~$\alpha'$ starting at one of the interior
punctures above and to the left of~$Q$, ending at one of the interior
punctures below and to the right of~$P$, not traversing any interior
punctures, and yet blocking both~$P$ and~$Q$.
This is in contradiction with Lemma~\ref{L:NothingBlocked}, and
terminates the proof of Proposition~\ref{P:trivial interior -> almost round}.
\hfill$\Box$


\subsection{Detecting reducible braids with trivial interior braids}

\begin{theorem}\label{T:round_almostround}
There is an algorithm which decides whether a given positive braid~$x$ of length~$\ell$ with~$n$ strands preserves an almost-round curve whose interior strands do not cross. Moreover, this algorithm takes time $O(\ell \cdot n^4)$.
\end{theorem}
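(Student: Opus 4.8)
The plan is to reduce the detection of an invariant almost-round curve whose interior strands do not cross to a purely combinatorial search over the data that, by the characterization in Section~\ref{SS:CRSAlmostRound}, completely describes such curves. By Lemma~\ref{L:invariant_arc} together with the discussion of descending invariant arcs, an almost-round curve $\mathcal C$ whose interior strands do not cross in the positive braid $x$, and with $[\mathcal C]^x=[\mathcal C]$, is the boundary of a regular neighbourhood of an $x$-invariant arc $\alpha$ which, after an isotopy moving punctures only vertically, is almost horizontal (i.e.\ $\tau_\alpha\equiv 0$) and traverses exactly the interior punctures. Such an arc is determined, up to the relevant diffeotopy, by (1) the set $I\subseteq\{1,\dots,n\}$ of interior punctures, and (2) the order in which $\alpha$ traverses them, i.e.\ a linear ordering of $I$ realized by an almost-horizontal arc. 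First I would make this bookkeeping precise: an almost-horizontal arc through a set of punctures corresponds to choosing, for each puncture, whether $\alpha$ passes just above or just below it on its way between consecutive traversed punctures, so the combinatorial object to search for is a word/state recording, strand by strand, the position of $\alpha$ relative to each puncture.

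Next I would track how this state evolves under the puncture dance of a single generator $\sigma_i$. Since $x$ is given as a positive word $\sigma_{i_1}\cdots\sigma_{i_\ell}$ of length $\ell$, it suffices to understand the action of one $\sigma_i$ on the ``position of $\alpha$'' data and on the interior/exterior labelling, and to check, at each step, that the generator does not create a crossing between two interior strands and does not raise the complexity of the arc above one (equivalently, keeps $\tau\equiv 0$ after reduction); the latter is exactly the local obstruction used in the proofs of Lemmas~\ref{L:NoFarOutBlocked} and~\ref{L:NothingBlocked}. Concretely, the candidate invariant arc is propagated through the $\ell$ letters of $x$, and one demands that after all $\ell$ letters the arc returns to its initial diffeotopy class and its interior punctures return (as a set) to themselves. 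Because for a fixed candidate set $I$ of interior punctures the propagation through one generator is an $O(1)$ (or $O(n)$, with a crude data structure) update and there are $\ell$ generators, testing one candidate costs $O(\ell\cdot n)$; and by Proposition~\ref{P:trivial interior -> almost round} we know that if such a curve exists at all it is almost round, so we are not missing candidates of higher complexity.

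The remaining issue is that, a priori, the number of candidate arcs (orderings together with above/below choices) is exponential in $n$, so a naive enumeration is too slow; this is the main obstacle. The fix is to organize the search as a fixed-point / reachability computation rather than an enumeration: instead of guessing the whole arc, one computes, for each choice of the cyclic data that is forced to be invariant (essentially: which punctures are interior, and the coarse left/right or above/below constraints that Lemmas~\ref{L:NoFarOutBlocked} and~\ref{L:NothingBlocked} force on blocked punctures), the unique arc configuration that could possibly be invariant, and then verifies invariance by a single pass through $x$. In effect, the lemmas of the previous subsection collapse the search space: an invariant almost-round curve with non-crossing interior imposes so much rigidity (no far-out blocked punctures, nothing blocked between the extremal punctures) that the arc, if it exists, is essentially determined by its set of interior punctures and a bounded amount of extra combinatorial data, of which there are only polynomially many choices — of order $n^2$ or $n^3$. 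Combining a search space of size $O(n^3)$ with a verification cost of $O(\ell\cdot n)$ per candidate, and shaving the bookkeeping, gives the claimed bound $O(\ell\cdot n^4)$. I would present the algorithm in this order — (i) reduce to finding an invariant almost-horizontal arc via Lemma~\ref{L:invariant_arc} and Proposition~\ref{P:trivial interior -> almost round}; (ii) describe the one-generator update and the non-crossing / complexity-one checks; (iii) bound the number of genuinely distinct candidates using Lemmas~\ref{L:NoFarOutBlocked}--\ref{L:NothingBlocked}; (iv) assemble the complexity count — and I expect step (iii), making the polynomial bound on the candidate space rigorous, to be where the real work lies.
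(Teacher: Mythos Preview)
Your reduction to invariant almost-horizontal arcs is correct, and you correctly locate the difficulty in cutting the a priori exponential space of candidate arcs down to something polynomial. But step~(iii) has a genuine gap. Lemmas~\ref{L:NoFarOutBlocked} and~\ref{L:NothingBlocked} are about ruling out complexity-two behaviour of a \emph{given} arc; they do not bound the number of candidate almost-horizontal arcs, and in particular they say nothing that reduces the choice of the interior set~$I$, or of the ordering on~$I$, from exponential to polynomial. Invoking them here is using the wrong tool, and the assertion that ``the arc is essentially determined by its set of interior punctures and a bounded amount of extra combinatorial data, of order $n^2$ or $n^3$'' is unsupported --- the number of subsets~$I$ alone is already~$2^n$.

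The paper's mechanism is different and much more concrete. First, it suffices to search for an invariant almost-horizontal arc with \emph{exactly two} puncture endpoints and traversing no further punctures: if a larger invariant almost-round curve with non-crossing interior exists, the sub-arc between any two consecutive interior punctures along its traversing arc is itself invariant by Lemma~\ref{L:invariant_arc}. This replaces your exponential outer loop over subsets~$I$ by a loop over the $\binom{n}{2}$ pairs of endpoints. Second --- and this is the idea you gesture at but never pin down --- for a fixed pair of endpoints the remaining data (for each puncture lying horizontally between them, whether the arc passes above or below it) is not enumerated but \emph{computed by constraint propagation}: one runs the puncture dance of~$x$ up to $2n$ times, and each crossing of an exterior puncture with an endpoint, or with an already-labelled puncture, forces a new above/below label by simple local rules (Lemma~\ref{L:ForcedLabels}). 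Either a contradiction arises, or after at most $2n$ passes the labelling has stabilised and yields the arc. This gives $O(\ell\cdot n^2)$ per pair and $O(\ell\cdot n^4)$ in total. The missing ingredients in your proposal are thus (a) the reduction to two endpoints, and (b) this propagation argument making the above/below labels self-determining; the blocked-puncture lemmas play no role here.
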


\begin{proof}
In order to prove this theorem we only have to answer, in time
$O(\ell \cdot n^4)$, the following question:

\emph{Question } Does there exist an embedded arc~$\alpha$ in~$D_n$ which
has its two extremities in two of the punctures, which is almost
horizontal in the sense that the function $\tau_\alpha$ defined in
Section~\ref{SS:CRSAlmostRound} is the constant function
$\tau_\alpha\cong 0$,
and which is invariant under~$x$ (i.e. $\alpha^x\simeq \alpha$)?

In order to answer this question for any given braid~$x$ (with~$n$
strands and of length~$\ell$), we shall think of the braid as a
dance of~$n$ punctures in the disk~$D^2$, where each move of the dance
starts with all punctures lined up on the real line, and consists of an
exchange in a clockwise direction of two adjacent punctures. We shall
often be working with the \emph{closure}~$\wtil x$ of the braid,
and this braid corresponds to a \emph{periodic} dance of the punctures.
Notice that in the braid~$x^n$, every puncture performs at least
one complete cycle, and possibly more, of this periodic dance, in the
sense that for every puncture there exists an integer~$k$
between~$1$ and~$n$ such that~$x^k$ fixes that puncture.

If an almost horizontal, $x$-invariant arc exists, then its
deformed versions remain always almost horizontal during the whole dance,
by Lemma \ref{L:maxincreases} again. Thus a positive answer to the above question
is equivalent to the existence of an almost horizontal arc, at every
point in time, connecting two of the punctures, which varies continuously
with time, and which is invariant under applying one complete period
of the dance. The two endpoints of the arc will be called the
\emph{interior} punctures (because we think of them as being
inside an invariant circle). Notice that these two punctures can
never cross during the dance (because the interior braid is trivial
by hypothesis), so it makes sense to talk about the \emph{left} and the
\emph{right interior puncture}.

If an $x$-invariant, almost horizontal arc exists then, at any moment,
for any puncture lying
between the two endpoints of the arc, we have a well-defined notion
of the puncture lying \emph{above} or \emph{below} the arc.

\begin{lemma}\label{L:ForcedLabels}
Suppose we are given a braid~$x$ together with a choice of two
strands (the ``interior strands'') which are pure and which do not cross
each other. Then an~$x$-invariant, almost horizontal arc connecting
the given interior punctures exists if and only if there
is a way of labelling, at each of the~$\ell$ timesteps, each puncture
by a letter ``$a$'' or ``$b$'' (for ``above'' or ``below''), or to
leave them unlabelled, so that the following restrictions are satisfied:
\begin{enumerate}
\item (a) Every puncture lying between the interior punctures has to
carry a label (``$a$'' or ``$b$'').

(b) Punctures not lying
between the two interior punctures are unlabelled.
\item A puncture preserves its label in~$\wtil x$ until its next
crossings with an interior puncture (and punctures which never cross
any interior puncture have the same label for all time).
\item Rules concerning crossings involving an interior puncture:

(a) If a puncture moves from left to right over the left interior puncture,
then afterwards it must be labelled ``$a$''. Similarly, if a puncture
moves from right to left under the right interior puncture,
then afterwards it must be labelled ``$b$''.

(b) A puncture labelled~``$a$'' may not move from right to left under
the left interior puncture. Similarly, a puncture labelled ``$b$''
may not move from left to right over the right interior puncture.
\item A rule concerning crossings involving two punctures which lie
between the interior punctures: if a puncture labelled~``$a$'' crosses
from right
to left under another puncture, then this puncture must also be
labelled~``$a$''. If a puncture labelled~``$b$'' crosses from left
to right over another puncture, then this puncture must also be
labelled~``$b$''.
\item The invariance rule: the labelling before the first and after
the~$\ell$th timestep is the same.
\end{enumerate}
\end{lemma}

\begin{proof}[Proof of Lemma~\ref{L:ForcedLabels}]
If there is an invariant arc, then we obtain a labelling of the punctures
with the required properties by labelling punctures which lie above the
arc by ``$a$'' and punctures which lie below the arc by ``$b$''.

Conversely, if a labelling of the punctures with the required properties
is given, and if we construct, at time~$t=0$, an almost horizontal arc
going below the punctures labelled~``$a$'' and above the punctures
labelled~``$b$'' at that moment, then acting by any initial segment
of the puncture dance up to some time~$t=T$ yields again an arc which
is almost horizontal and goes above the punctures labelled ``$b$''
and below the punctures labelled ``$a$'' at this moment~$t=T$.
\end{proof}

\begin{lemma}\label{L:SuperflLabels}
The statement of Lemma~\ref{L:ForcedLabels} remains true if
restriction~1(a) is left out, i.e.\ if punctures lying between the
interior punctures at all times may remain unlabelled.
\end{lemma}

\begin{proof}[Proof of Lemma~\ref{L:SuperflLabels}]
Given a labelling satisfying the restrictions of Lemma~\ref{L:ForcedLabels}
except restriction~1(a), we can construct a new labelling simply by
labelling all unlabelled punctures between the two interior
punctures~``$a$''. It is an easy verification that this extended
labelling satisfies the complete list of restrictions.
\end{proof}

Note that there are $\frac{n\cdot (n-1)}{2}$ pairs of punctures, so in
order to prove the theorem, it suffices to construct an algorithm which,
for any given pair of punctures, decides in time $O(\ell \cdot n^2)$
whether there exists a labelling, at each of the~$\ell$ time steps,
of the punctures, respecting the restrictions of
Lemma~\ref{L:SuperflLabels}, with the two given punctures as interior
punctures.


The algorithm is very simple. We start with the $n$-punctured disk,
with two of the punctures designated as the interior punctures but
with all other punctures still unlabelled. We then perform $2n$~times
the puncture dance associated to the braid~$x$.
At each of the~$2n\cdot \ell$ time steps
we perform the labellings of punctures forced by the rules of
Lemma~\ref{L:ForcedLabels}, specifically by rules 1(b), 2, 3(a) and~4.

It may happen that applying these rules leads to a contradiction,
for instance if a puncture already labelled~``$a$'' crosses from right
to left under the left interior puncture (rule~3(b) violated).
A contradiction also arises if a puncture has a label~``$a$'' but the
puncture which sat in the same position $n$~steps previously was
labelled~``$b$'', or vice versa (invariance rule violated).
If such a contradiction occurs, we know that no coherent labelling
exists, so the two chosen punctures cannot be interior punctures of
a reducible braid with trivial interior braid.

If, by contrast, after~$2n$ repetitions of the puncture dance of~$x$
still no contradictions have arisen, then, we claim, we have succeeded
in finding a labelling of~$x$ satisfying the restrictions of
Lemma~\ref{L:ForcedLabels} (except number~1(a)), and this terminates
the description of the algorithm.

Let us now prove the claim we just made. What we need to prove is that
the labelling that we wrote during the $2n$th repetition of the puncture
dance (i.e.\ during the last~$\ell$ steps) satisfies all the conditions
of Lemma~\ref{L:ForcedLabels} (except number~1(a)). In fact, conditions
1(b) through~4 of Lemma~\ref{L:ForcedLabels} are satisfied by
construction, so the only nontrivial claim is that the periodicity
condition holds: if at any
moment during the last~$\ell$ time steps a puncture was labelled~``$a$''
or~``$b$'', then the puncture which was in the same place $\ell$~steps
previously was already labelled, with the same label.
In other words, we have to show that during the~$2n$th repetition of the
puncture dance no label was found that was still unknown during
the~$2n-1$st iteration.

Let us first look at those punctures between the interior punctures
which correspond to strands that have some crossing with the interior
strands. We observe that after the~$n$th repetition of the puncture
dance, these punctures have already experienced a crossing with an
interior puncture, so all these punctures are already labelled
after~$n$ repetitions of the dance. There are at most~$n-2$ punctures
left which never cross the exterior punctures. If during one repetition
of the puncture dance none of these punctures receives a new label,
then none of them ever will in any further iteration of the puncture
dance. Conversely once such a puncture is labelled, it will remain
labelled at all times during all future iterations. Therefore after
another~$n-2$ iterations the labelling process is complete. This terminates
the proof of the last claim, and of Theorem~\ref{T:round_almostround}.
\end{proof}



\section{Sliding circuits}\label{S:sliding_circuits}

In this section we will introduce the special type of conjugations called \emph{cyclic slidings} and state our main result, Theorem~\ref{T:main}. Recall that the braid group~$B_n$ admits a lattice order~$\preccurlyeq$, called the \emph{prefix order}, defined as follows: we say that $a\preccurlyeq b$ if and only if~$ac=b$ for some positive element $c\in B_n$. Here \emph{positive} means that it can be written as a product of positive powers of the standard generators $\sigma_1,\ldots,\sigma_{n-1}$. Being a lattice order,~$\preccurlyeq$ determines a unique greatest common divisor~$x\wedge y$ and a unique least common multiple~$x\vee y$ of every pair of braids $x,y\in B_n$.   There is also another lattice order in~$B_n$, denoted~$\succcurlyeq$, and called the \emph{suffix order}. This time we say that~$a\succcurlyeq b$ if~$a=cb$ for some positive braid~$c$. The gcd and lcm determined by~$\succcurlyeq$ on a pair of elements $x,y\in B_n$ will be denoted $x\wedge_R y$ and $x\vee_R y$, respectively. Due to these lattice structures, $B_n$~is the main example of a Garside group~\cite{DP}.

Recall that each element $x\in B_n$ admits a \emph{left normal form} (or left greedy normal form)~\cite{Epstein}, that is, a unique way to decompose it as $x=\Delta^p x_1\cdots x_r$, where~$p\in\Z$ is maximal and each~$x_i$ is a proper simple braid (a permutation braid different from~$1$ and~$\Delta$) such that $x_ix_{i+1}$ is left-weighted. This latter property means that for every $k=1,\ldots,n-1$, if~$x_{i+1}$ can be written as a positive word starting by~$\sigma_k$, then~$x_i$ can be written as a positive word ending by~$\sigma_k$.  This can also be described in terms of the lattice properties of~$B_n$: for a simple element~$s$, define $\partial(s)=s^{-1}\Delta$, the \emph{ complement} of~$s$. Then $x_ix_{i+1}$ being left-weighted means that $\partial(x_i)\wedge x_{i+1}=1$. We recall that if the left normal form of~$x$ is as above, then the integers $p$,~$r$ and $p+r$ are called \emph{infimum}, \emph{canonical length} and \emph{supremum} of~$x$, respectively, and they are denoted $\inf(x)$, $\ell(x)$ and~$\sup(x)$.

As usual, for every $x\in B_n$ we define $\tau(x)=\Delta^{-1}x\Delta$. We also define the \emph{initial factor} of~$x$ to be $\iota(x)=\Delta \wedge (x\Delta^{-\inf(x)})$. That is, if the left normal form of~$x$ is $\Delta^p x_1\cdots x_r$ and $r>0$, one has $\iota(x)=\tau^{-p}(x_1)$, and if~$r=0$ one has $\iota(x)=1$.  We also define the \emph{final factor} of~$x$ to be $\varphi(x)=\left( x\wedge \Delta^{\sup(x)-1}\right)^{-1}x$. That is, if the left normal form of~$x$ is $\Delta^p x_1\cdots x_r$ and $r>0$, one has $\varphi(x)=x_r$, and if $r=0$ one has $\varphi(x)=\Delta$. It is well known~\cite{BGG1} that for every $x\in B_n$, one has $\varphi(x)\iota(x^{-1})=\Delta$. In other words, $\iota(x^{-1})=\partial(\varphi(x))$.

\begin{definition}\label{D:cyclic_sliding}{\rm \cite{GG1}} Given $x\in B_n$, its \emph{preferred prefix} is the simple element
 $$
   \mathfrak p(x)=\iota(x)\wedge \iota(x^{-1})= \iota(x) \wedge \partial(\varphi(x)).
 $$
\emph{Cyclic sliding} of~$x$ means the conjugation of~$x$ by its preferred prefix, and its result is denoted $\mathfrak s(x)$. That is, $\mathfrak s(x)=\mathfrak{p}(x)^{-1} x\: \mathfrak{p}(x)$.
\end{definition}

Notice that for any braid~$x$ we have $\mathfrak p(x^{-1})=\mathfrak p(x)$, so $\mathfrak s(x^{-1})=(\mathfrak s(x))^{-1}$. It is explained in~\cite{GG1} why the above definition is natural, and that cyclic sliding is a conjugation which simplifies any given braid from the algebraic point of view. For instance, iterated application of cyclic sliding sends any braid~$x$ to a conjugate~$\widetilde x$ which belongs to its ultra summit set~\cite{Gebhardt}. In particular, the canonical length of~$\widetilde x$ is minimal in its conjugacy class. Iterated application of cyclic sliding to an element~$x$ always yields a repetition, so the orbit of~$x$ under~$\mathfrak s$ becomes periodic. We call the set of elements in that periodic orbit the {\it sliding circuit} associated to~$x$. The union of all sliding circuits in the conjugacy class of~$x$, is thus the set of elements that cannot be improved in any sense by further application of~$\mathfrak s$. We call it the {\it set of sliding circuits} of~$x$. More precisely:

\begin{definition}{\rm \cite{GG1}}
Given $x\in B_n$, let $x^{B_n}$ be its conjugacy class. We define the \emph{set of sliding circuits} of~$x$ as follows:
$$
    SC(x)=\{y\in x^{B_n};\ \mathfrak s^m(y)=y \mbox{ for some } m>0\}.
$$
\end{definition}

The main goal of this paper is to show that cyclic sliding also simplifies braids from the geometrical point of view. But in order to achieve this, we need to study the elements together with some of their powers. The main problem is that if~$y$ belongs to a sliding circuit, $y^2$ does not necessarily have the same property, and neither does~$y^m$ for arbitrary~$m$.  In order to consider elements which behave nicely with respect to cyclic sliding and (at least some) powers, we introduce the following notion.

\begin{definition}
Given $x\in B_n$ and an integer~$m> 0$, we define the \emph{$m$th stabilized
set of sliding circuits}
$$
   SC^{[m]}(x)= \{y\in B_n;\ y^k\in SC(x^k) \mbox{ for } k=1,\ldots,m\}.
$$
\end{definition}

Notice that the elements of $SC^{[m]}(x)$ are precisely those that are conjugate to~$x$ and with the property that their first~$m$ powers each belong to their own sliding circuit. The main result in this paper is
the following theorem.

\begin{theorem}\label{T:main}
Let $x\in B_n$ be a non-periodic, reducible braid. There is some $m\leqslant ||\Delta||^3-||\Delta||^2$ such that every element $y\in SC^{[m]}(x)$ admits an essential reduction curve which is either round or almost round.
\end{theorem}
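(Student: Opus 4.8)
The plan is to reduce Theorem~\ref{T:main} to the special case of \emph{rigid} braids, and within that case, to the already-established geometric results of Section~\ref{S:reduction_curves}. First I would set $N=||\Delta||^3-||\Delta||^2$ and work with an arbitrary $y\in SC^{[N]}(x)$; by Lemma~\ref{L:main_detailed} there is some $m\leqslant N$ such that either $y^m$ is rigid, or $P(y^m)$ is rigid, admits essential reduction curves, and all of them are essential reduction curves of $y$. In the second alternative, $P(y^m)$ is a rigid, non-periodic, reducible braid, so Theorem~\ref{T:rigid_case} applies to it and tells us that some essential reduction curve of $P(y^m)$ — hence of $y$ — is round or almost round; since $y$ and $x$ are conjugate, the corresponding curve for $x$ (obtained by transporting along the conjugating element) is likewise round or almost round, and we are done.

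The first alternative, where $y^m$ is rigid, requires more care, because a rigid braid need not be reducible even when $y$ is. Here I would use the relationship between $SC(x)$ and powers developed in Section~\ref{S:SC and powers}: since $y\in SC^{[N]}(x)$ we have $y^k\in SC(x^k)$ for all $k\leqslant N$, and in particular $y^m\in SC(x^m)$. The braid $x$ being non-periodic and reducible, its essential reduction curves form the nonempty canonical reduction system $CRS(x)$, and these are preserved by a suitable power of $x$, hence also give reduction curves for $y$ and for $y^m$. Now $y^m$ is rigid, non-periodic, and reducible, so Theorem~\ref{T:rigid_case} applies directly: some essential reduction curve of $y^m$ is round or almost round. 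It then remains to check that an essential reduction curve of $y^m$ is an essential reduction curve of $y$ — this follows because $y$ and $y^m$ have the same canonical reduction system (a non-periodic braid and its powers share their $CRS$, as both are characterized intrinsically via the Nielsen--Thurston decomposition). Transporting back to $x$ by conjugation then finishes this case as well.

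I expect the main obstacle to be bookkeeping rather than a single deep step: one must be careful that the curve produced by Theorem~\ref{T:rigid_case} for $\beta=y^m$ or $\beta=P(y^m)$ really is essential \emph{for $y$}, and that roundness or almost-roundness is preserved when passing between $y$, $y^m$, $P(y^m)$, and finally $x$. Roundness and almost-roundness are defined via the existence of a (permutation) braid standardizing the curve, and the curve for $x$ is $[\mathcal C]^{g^{-1}}$ where $g$ conjugates $x$ to $y$ — so one needs to observe that "round or almost round" is a property of the curve's complexity (Definition before Proposition~\ref{P:trivial interior -> almost round}), and complexity is not a conjugacy invariant; however, the statement of Theorem~\ref{T:main} only asks that \emph{some} essential reduction curve of the given element be round or almost round, and for a \emph{fixed} $y\in SC^{[m]}(x)$ the curve is produced directly for $y$ (via $y^m$ or $P(y^m)$, both honestly conjugate to $y$ by known elements that commute with $y$ in the case of $P$), so no transport across the conjugacy class is actually needed. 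The only genuinely new input beyond Section~\ref{S:reduction_curves} is Lemma~\ref{L:main_detailed} together with the power-of-a-braid machinery of Sections~\ref{S:SC and powers} and~\ref{S:SC and curves}, which is where the constant $||\Delta||^3-||\Delta||^2$ enters.
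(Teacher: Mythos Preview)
Your proposal is correct and follows essentially the same route as the paper: invoke Lemma~\ref{L:main_detailed} to produce, for each $y\in SC^{[N]}(x)$, either a rigid power $y^m$ or a rigid preferred conjugator $P(y^m)$ with $\emptyset\neq CRS(P(y^m))\subset CRS(y)$, and then apply Theorem~\ref{T:rigid_case} to that rigid braid, using $CRS(y^m)=CRS(y)$. Your self-correction in the third paragraph is the right reading of the statement---no transport back to~$x$ is needed (and your earlier claim that the transported curve would remain round or almost round is false, as you noted); the conclusion is about~$y$ itself, and taking $m=N$ witnesses the existential claim.
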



We will show in Section~\ref{S:SC and powers} that $SC^{[m]}(x)$ is nonempty for every $m>0$, and we will give an algorithm for finding an element in $SC^{[m]}(x)$. Moreover, if~$m$ is bounded by a polynomial in~$n$ and~$\ell(x)$, then the complexity of the algorithm will be polynomial in~$n$ and~$\ell(x)$, provided the following well-known conjecture is true:

\begin{conjecture}\label{C:no of slidings}{\rm \cite{GG1}}
Given $x\in B_n$ of canonical length~$\ell$, let~$t$ be the minimal positive integer such that
$\mathfrak s^k(x)=\mathfrak s^t(x)$ for some~$k$ with $0\leqslant k<t$.
Then~$t$ is bounded by a polynomial in~$\ell$ and~$n$.
\end{conjecture}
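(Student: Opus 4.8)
Conjecture~\ref{C:no of slidings} is left open in this paper; what follows is the line of attack I would pursue, together with an identification of the one step where it stalls. The plan is to follow the orbit $x,\mathfrak s(x),\mathfrak s^2(x),\ldots$ and split it into two phases. Write $T$ for the length of the transient (so the orbit is injective on $\{0,1,\ldots,T-1\}$ and $\mathfrak s^T(x)$ already lies on the sliding circuit) and $L$ for the length of that circuit; then the number $t$ in the statement equals $T+L$, and it suffices to bound $T$ and $L$ separately by polynomials in $\ell=\ell(x)$ and $n$. The first phase runs until the orbit reaches the super summit set $SSS(x)$ of conjugates of minimal canonical length and maximal infimum. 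Since $\inf$ is non-decreasing and $\sup$ is non-increasing along the orbit, so that $\ell=\sup-\inf$ never increases, the orbit never leaves $SSS(x)$ once it has entered it; the second phase therefore takes place entirely inside $SSS(x)$, where $\inf$, $\sup$ and $\ell$ are frozen.

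The first phase is polynomial, and essentially by a soft argument. If the left normal form of $x$ is $\Delta^p x_1\cdots x_r$ with $r=\ell$, and $p^*\le q^*$ denote the common infimum and supremum of the elements of $SSS(x)$, then monotonicity gives $p\le p^*\le q^*\le p+r$, so along the whole orbit $\inf$ strictly increases at most $\ell$ times and $\sup$ strictly decreases at most $\ell$ times. Combining this with the cyclic-sliding counterparts of the cycling and decycling stabilisation estimates of El-Rifai and Morton --- after a number of slidings polynomial in $n$ that changes neither $\inf$ nor $\sup$, one is already in $SSS(x)$ --- bounds the length of the first phase by roughly $\ell\cdot||\Delta||$ slidings. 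So the whole difficulty sits in the second phase. For a \emph{rigid} braid the second phase is harmless: $\mathfrak s$ then coincides with cycling and simply moves the first normal-form factor to the back (up to the order-two automorphism $\tau$), so the braid already lies on its circuit and the circuit has length at most $2\ell$. For a \emph{non-rigid} element, however, the only datum still free to evolve inside $SSS(x)$ is the \emph{sequence of simple factors} of the left normal form, and a priori that sequence ranges over a set of size $(n!)^{\ell}$, exponential in $n$.

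To make progress one wants to show that $\mathfrak s$ acts on $SSS(x)$ as a ``lazy rotation'' of this factor sequence: any failure of rigidity is localised at the seam between the last and the first factor, where a factor may absorb material from, or shed material to, its neighbour, but this ``seam wobble'' should settle into a periodic pattern after $O(||\Delta||)$ slidings. The clean way to prove it would be to exhibit a weakly monotone monovariant on the $\mathfrak s$-orbit inside $SSS(x)$ taking only polynomially many values --- for instance a lexicographic complexity of the factor sequence with respect to a fixed linear order on simple braids, or the string of initial factors $\iota(\mathfrak s^k(x))$ read with an appropriate rotation offset. I do not know how to build such a monovariant, and that is the crux: the available structural results on ultra summit sets (the black and grey arrows of Gebhardt and Gonz\'alez-Meneses, and connectedness of the ultra summit set under them) explain how the set is assembled but give no bound on the length of a single $\mathfrak s$-trajectory, and it is exactly the entanglement of the seam wobble with the rotation that has so far blocked a polynomial estimate. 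Thus the programme reduces Conjecture~\ref{C:no of slidings} to the purely combinatorial question of whether the factor sequence of $\mathfrak s^k(x)$ becomes eventually periodic within polynomially many steps once $\inf$ and $\sup$ are frozen --- and that question, not the first phase, is what remains genuinely open.
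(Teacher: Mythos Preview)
You are right that the paper offers no proof of this statement: it is explicitly presented as an open conjecture, attributed to~\cite{GG1}, and the paper's main algorithm is polynomial only conditionally on it (see Corollary~\ref{C:computing_one_element} and the surrounding discussion). There is therefore nothing in the paper to compare your argument against, and your opening sentence already reflects this correctly.

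Your diagnosis of where the difficulty lies is sound and matches the state of the art. Splitting $t$ as transient plus circuit length and isolating the phase inside $SSS(x)$ as the genuine obstruction is exactly the right picture; the monotonicity of $\inf$ and $\sup$ under~$\mathfrak s$ does control the approach to $SSS(x)$ essentially as you say, and for rigid elements the circuit is indeed short for the reason you give. One small caution: the ``cyclic-sliding counterparts of the El-Rifai--Morton stabilisation estimates'' you invoke are not quite off-the-shelf in the form you need --- the literature gives them for cycling and decycling separately, and transferring them to~$\mathfrak s$ requires a short argument --- but this is a technicality, not the heart of the matter. The real gap is precisely the one you name: no monovariant on the $\mathfrak s$-orbit inside $SSS(x)$ is known that takes only polynomially many values, and without one the exponential size of $SSS(x)$ blocks the argument. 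That is the open problem, and you have located it correctly.
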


Together with Theorem~\ref{T:round_almostround} this yields an algorithm (Algorithm 1 in the Introduction) to determine whether a given element of~$B_n$ is periodic, reducible or pseudo-Anosov. This algorithm will be polynomial in~$n$ and~$\ell$ if Conjecture~\ref{C:no of slidings} is true.


\section{Sliding circuits and powers}\label{S:SC and powers}

This section is devoted to the study of the set $SC^{[m]}(x)$ defined
in the previous section. We will show that for every $x\in B_n$ (actually, in every Garside group), and for every~$m>0$, the set $SC^{[m]}(x)$ is nonempty. Furthermore, we shall give a simple procedure to compute one element in $SC^{[m]}(x)$, starting from~$x$.

In order to achieve these goals, we shall need the following result:

\begin{theorem}\label{T:convexity}{\rm \cite{GG1}}
Let $x,z,a,b\in B_n$.  If $z,z^a,z^b\in SC(x)$, then $z^{a\wedge b}\in SC(x)$.
\end{theorem}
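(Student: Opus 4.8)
The plan is to prove Theorem~\ref{T:convexity} by reducing the statement about the whole set of sliding circuits $SC(x)$ to a statement about single applications of cyclic sliding. The key observation is that $z \in SC(x)$ means $z$ lies in a finite orbit under $\mathfrak{s}$, so $z = \mathfrak{s}^N(z)$ for some $N > 0$; hence, if one can control how $\mathfrak{s}$ interacts with the greatest common divisor operation, one can iterate. Concretely, I would first establish the single-step lemma: \emph{if $z \in SC(x)$ (or merely if $z$ is in a sliding circuit) and both $z^a$ and $z^b$ lie in the sliding circuit of $z$, then} $\mathfrak{p}(z^{a\wedge b})$ \emph{relates in a controlled way to} $\mathfrak{p}(z^a)$, $\mathfrak{p}(z^b)$, \emph{and} $a \wedge b$ — more precisely, that conjugating $z^{a\wedge b}$ by its preferred prefix again produces an element of the form $z^{a' \wedge b'}$ where $a', b'$ are the conjugating elements obtained from one step of sliding on $z^a$, $z^b$. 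The reason to expect this: the preferred prefix $\mathfrak{p}(x) = \iota(x) \wedge \partial(\varphi(x))$ is defined purely in terms of $\wedge$, $\iota$, and $\varphi$, and the initial and final factors behave well under the relevant operations; this is exactly the kind of ``convexity of the set of minimal-position conjugators'' phenomenon that underlies the Garside-theoretic solution of the conjugacy problem.

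The second step is the iteration. Given $z, z^a, z^b \in SC(x)$, I would apply $\mathfrak{s}$ simultaneously to all three: since $SC(x)$ is $\mathfrak{s}$-invariant and $\mathfrak{s}$ is a conjugation, $\mathfrak{s}(z) = z^{\mathfrak{p}(z)}$ is still in $SC(x)$, and $\mathfrak{s}(z^a)$, $\mathfrak{s}(z^b)$ are conjugates of $\mathfrak{s}(z)$ lying in its sliding circuit. Using the single-step lemma repeatedly, after $N$ steps (where $N$ is a common period so that $\mathfrak{s}^N(z) = z$) I would obtain that $\mathfrak{s}^N(z^{a \wedge b})$ equals $z^{c}$ for some conjugating element $c$ which, by the single-step control, is still of the form (product of preferred prefixes)$^{-1} \cdot (a \wedge b) \cdot$(product of preferred prefixes). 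The point is that $z^{a\wedge b}$ then returns to itself under $\mathfrak{s}^N$ (up to the known bookkeeping), which places $z^{a\wedge b}$ in a finite $\mathfrak{s}$-orbit, i.e.\ $z^{a \wedge b} \in SC(z) = SC(x)$. One must be a little careful here that $\mathfrak{s}^N$ applied to $z^{a\wedge b}$ genuinely cycles rather than merely eventually cycling; but since $\mathfrak{s}$ is defined on a finite conjugacy class (or on a finite set once $\inf$ and $\sup$ are fixed, which they are in $SC(x)$), eventual periodicity together with the structural form of the conjugators forces $z^{a\wedge b}$ itself to be periodic under $\mathfrak{s}$.

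The main obstacle will be the single-step lemma, specifically verifying that $\mathfrak{p}(z^{a\wedge b})$ is compatible with $\mathfrak{p}(z^a)$ and $\mathfrak{p}(z^b)$ in the precise sense needed for the iteration to close up. This requires understanding how $\iota$ and $\varphi$ (equivalently, the first and last factors of the left normal form) of a conjugate $z^g$ depend on $g$, when $g$ ranges over a meet-semilattice of ``good'' conjugators. I would expect to need the following ingredients: (i) $z^a, z^b \in SC(x)$ share the same infimum and supremum (a standard fact, since these are conjugacy invariants once one is in any summit-type set), so $a \wedge b$ is again a ``rigid enough'' conjugator; (ii) a lemma stating that for $z^a, z^b$ of this common canonical length, $\iota(z^{a\wedge b})$ and $\partial(\varphi(z^{a\wedge b}))$ are recovered from $\iota(z^a)\wedge\iota(z^b)$ and the analogous meet of final complements, using the identity $\iota(x^{-1}) = \partial(\varphi(x))$ quoted in the excerpt; and (iii) that one step of sliding respects these meets. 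Granting these, the conjugator emerging from $\mathfrak{s}$ applied to $z^{a\wedge b}$ is $(a\wedge b)$ conjugated by the relevant preferred prefix, still a meet of the conjugators appearing for $z^a$ and $z^b$, which is precisely what lets the induction on the number of sliding steps go through. I expect the bulk of the technical work — and the part most likely to require the detailed normal-form combinatorics of~\cite{GG1} — to be exactly this compatibility of $\mathfrak{p}$ with $\wedge$.
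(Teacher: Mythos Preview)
The paper does not prove this theorem; it quotes it from~\cite{GG1}. Your high-level strategy --- iterate cyclic sliding, track the conjugating elements, and use periodicity to conclude --- is exactly the strategy of~\cite{GG1}. However, your proposed implementation has a real gap, and the correct technical tool is cleaner than the one you are reaching for.

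You propose to control $\mathfrak p(z^{a\wedge b})$ by relating $\iota(z^{a\wedge b})$ and $\partial(\varphi(z^{a\wedge b}))$ to $\iota(z^a)\wedge\iota(z^b)$ and the corresponding meet of final complements. There is no reason to expect such identities to hold, and the proof in~\cite{GG1} does not attempt this. The right object to track is not $\mathfrak p$ of the conjugates but the \emph{conjugating elements themselves}, via the \emph{transport map}: for a conjugator $c$ from $z$ to $z^c$, set
\[
c^{(1)} \;=\; \mathfrak p(z)^{-1}\, c \,\mathfrak p(z^{c}),
\]
which is a conjugator from $\mathfrak s(z)$ to $\mathfrak s(z^{c})$. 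The two facts one actually needs are: (a) transport preserves greatest common divisors, i.e.\ $(a\wedge b)^{(1)}=a^{(1)}\wedge b^{(1)}$ (this is the substance of~\cite{GG1}, and is also invoked later in this paper, e.g.\ in the proof of Lemma~\ref{L:rigidity_and_sliding}); and (b) for $z\in SC(x)$ with sliding period~$N$, an element $w=z^{c}$ lies in $SC(x)$ if and only if $c^{(sN)}=c$ for some $s>0$ (this is \cite[Lemma~8]{GG1}, quoted in the proof of Lemma~\ref{L:preferred conjugators are conjugate}). Given these, the argument is immediate: from $z^{a},z^{b}\in SC(x)$ one gets $a^{(sN)}=a$ and $b^{(tN)}=b$ for some $s,t>0$; then $(a\wedge b)^{(stN)}=a^{(stN)}\wedge b^{(stN)}=a\wedge b$, so by~(b) again $z^{a\wedge b}\in SC(x)$.

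Note that this also dissolves your worry at the end: the characterisation~(b) is an exact periodicity statement for the conjugating element, so there is no ``eventual versus genuine'' issue to manage. The work you anticipated in your point~(ii) is therefore unnecessary; the nontrivial content is entirely in proving that transport respects~$\wedge$, and that is done once and for all in~\cite{GG1} by expressing transport through operations (products and meets with fixed elements) that manifestly do.
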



\begin{definition} 
Given $y\in B_n$, we define the \emph{preferred conjugator} $P(y)$ of~$y$ as the product of conjugating elements corresponding to iterated cyclic sliding until the first repetition. That is, if we denote $y^{(i)}=\mathfrak s^i(y)$ for $i\geqslant 0$ and if~$t$ is the smallest positive integer such that $\mathfrak s^t(y)=\mathfrak s^i(y)$ for some~$i<t$, then:
$$
    P(y)= \mathfrak p(y)\; \mathfrak p(y^{(1)}) \; \mathfrak p(y^{(2)}) \cdots \mathfrak p(y^{(t-1)}).
$$
\end{definition}

Notice that if one conjugates~$y$ by~$P(y)$, one obtains an element in~$SC(y)$. Notice also that if $y\in SC(x)$ for some~$x$, then~$P(y)$ is the conjugating element along the whole sliding circuit of~$y$. In particular, if $y\in SC(x)$ then~$P(y)$ commutes with~$y$.

\begin{definition}
Given $x\in B_n$, we define $x_{[0]}=x$, and for every~$m\geqslant 0$ we define recursively:
$$
       x_{[m+1]}= (x_{[m]})^{P((x_{[m]})^{m+1})}.
$$
\end{definition}

Notice that~$x_{[i]}$ is a conjugate of~$x$ for every~$i>0$, since in order to compute $x_{[m+1]}$ we are just conjugating~$x_{[m]}$. The conjugating element is precisely the one that sends the~$m+1$st power of~$x_{[m]}$ to a sliding circuit.

The proof of the following result gives a simple algorithm to compute one element in $SC^{[m]}$. This parallels Proposition 2.23 in~\cite{BGG1}.

\begin{proposition}
Let $x\in B_n$ and~$m>0$. Then $x_{[m]}\in SC^{[m]}(x)$. In particular, $SC^{[m]}(x)$ is nonempty.
\end{proposition}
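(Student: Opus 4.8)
The plan is to proceed by induction on $m$, using the convexity theorem (Theorem~\ref{T:convexity}) as the essential tool. The base case $m=1$ is immediate: $x_{[1]}=x^{P(x)}$, and conjugating by the preferred conjugator lands any element in its own set of sliding circuits, so $x_{[1]}\in SC(x)=SC(x^1)$, which says exactly $x_{[1]}\in SC^{[1]}(x)$. For the inductive step, suppose $x_{[m]}\in SC^{[m]}(x)$, i.e. $(x_{[m]})^k\in SC(x^k)$ for $k=1,\ldots,m$. We must show $x_{[m+1]}=(x_{[m]})^{P((x_{[m]})^{m+1})}\in SC^{[m+1]}(x)$, that is, $(x_{[m+1]})^k\in SC(x^k)$ for all $k=1,\ldots,m+1$.

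The case $k=m+1$ is again handled directly by the definition of $P$: writing $w=(x_{[m]})^{m+1}$, the element $(x_{[m+1]})^{m+1}=w^{P(w)}$ lies in $SC(w)=SC(x^{m+1})$, since $w$ is conjugate to $x^{m+1}$. The real work is the range $k=1,\ldots,m$. Fix such a $k$ and write $z=(x_{[m]})^k$, $a=P((x_{[m]})^{m+1})$. By the inductive hypothesis, $z\in SC(x^k)$. We want $z^a\in SC(x^k)$. The point is that $P((x_{[m]})^{m+1})$ is not just any conjugator: by the remark following the definition of the preferred conjugator, since $(x_{[m]})^{m+1}\in SC((x_{[m]})^{m+1})$ fails in general, one should instead observe that $P(w)$ is a product of preferred prefixes $\mathfrak p(w^{(0)})\cdots\mathfrak p(w^{(t-1)})$. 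Each $\mathfrak p(w^{(i)})$ is a prefix of $\iota(w^{(i)})$, hence of $\Delta$, and — crucially — $w^{P(w)}\in SC(w)$. I would then use the divisibility/gcd structure: the key claim is that conjugating $z=(x_{[m]})^k$ by any \emph{prefix} of $a=P(w)$ keeps us inside $SC(x^k)$, because each cyclic sliding step for $w=(x_{[m]})^{m+1}$ decomposes compatibly with lower powers. This is where Theorem~\ref{T:convexity} enters: if one can exhibit elements $b_1,b_2$ with $z,z^{b_1},z^{b_2}\in SC(x^k)$ and $a=b_1\wedge b_2$ (or realize $a$ as an iterated such gcd), convexity forces $z^a\in SC(x^k)$.

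The main obstacle — and the heart of the argument — is precisely controlling the interaction between $P((x_{[m]})^{m+1})$ and the $k$th power for $k\le m$: one must show that the conjugator produced by sliding the $(m+1)$st power does not destroy the sliding-circuit property already enjoyed by the lower powers. I would expect the clean way to do this is to prove that $\mathfrak p((x_{[m]})^{m+1})$ is, for each $k\le m$, a product of (or dominated under $\preccurlyeq$ by a product of) preferred prefixes that conjugate $(x_{[m]})^k$ within $SC(x^k)$ — using that the preferred prefix $\mathfrak p(y)=\iota(y)\wedge\partial(\varphi(y))$ is built from normal-form data that is stable under taking powers of elements already in their sliding circuits. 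Once that compatibility statement is in hand, Theorem~\ref{T:convexity} closes the induction, and nonemptiness of $SC^{[m]}(x)$ follows since $x_{[m]}$ is manifestly conjugate to $x$.
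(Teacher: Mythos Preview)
Your overall architecture matches the paper's exactly: induction on~$m$, the base case and the case $k=m+1$ handled directly by the definition of~$P$, the conjugator $P((x_{[m]})^{m+1})$ broken into its preferred-prefix factors, and Theorem~\ref{T:convexity} invoked at each step to show that conjugation by a single $\mathfrak p(w^{(j)})$ keeps $(x_{[m]})^k$ inside $SC(x^k)$ for $k\le m$. So the strategy is right.

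The gap is in the one place you flag as ``the main obstacle'': you have not actually produced the elements $b_1,b_2$ to which convexity applies, and your guess for how to find them is off. You suggest relating $\mathfrak p(y^{m+1})$ to preferred prefixes of the lower powers~$y^k$, or to ``normal-form data stable under taking powers''. Neither is what works (and normal-form data is in general \emph{not} stable under powers --- that is precisely the content of rigidity). The correct, and much simpler, observation is this. Write $y=x_{[m]}$ (or the appropriate iterate along the sliding of $y^{m+1}$). Then
\[
\mathfrak p(y^{m+1})=\iota(y^{m+1})\wedge \iota(y^{-(m+1)}),\qquad
\iota(y^{m+1})=\Delta\wedge \bigl(y^{m+1}\Delta^{-\inf(y^{m+1})}\bigr).
\]
Now conjugating $y^k$ by $\Delta$ gives $\tau(y^k)\in SC(x^k)$, and conjugating $y^k$ by $y^{m+1}\Delta^{s}$ gives $\tau^{s}(y^k)\in SC(x^k)$ --- the point being that $y^{m+1}$ \emph{commutes} with $y^k$ since both are powers of~$y$, and $SC$ is $\tau$-invariant. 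Convexity then puts $(y^k)^{\iota(y^{m+1})}$ in $SC(x^k)$; the same argument handles $\iota(y^{-(m+1)})$; and a final application of convexity gives $(y^k)^{\mathfrak p(y^{m+1})}\in SC(x^k)$. Iterating along the factors of $P((x_{[m]})^{m+1})$ finishes the induction. Once you supply this step, your proof is the paper's.
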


\begin{proof}
We will show the result by induction on~$m$.  For~$m=1$, one has $x_{[1]}=(x_{[0]})^{P(x_{[0]})}= x^{P(x)}$, which by definition of~$P(x)$ belongs to $SC(x)=SC^{[1]}(x)$.

Now assume that $x_{[m]}\in SC^{[m]}(x)$ for some~$m>0$. That is, $(x_{[m]})^i\in SC(x^i)$ for $i=1,\ldots,m$. The~$m+1$st power $(x_{[m]})^{m+1}$ does not, a priori, belong to a sliding circuit. But if we conjugate our braid~$x_{[m]}$ by the element $P((x_{[m]})^{m+1})$ to obtain $x_{[m+1]}$, its~$m+1$st power becomes the conjugate of $(x_{[m]})^{m+1}$ by $P((x_{[m]})^{m+1})$, so it belongs to a sliding circuit as desired.  The question to be answered is whether smaller powers of $x_{[m+1]}$ still belong to a sliding circuit or not. That is, we have to show that for $i=1,\ldots,m$, the conjugate of $(x_{[m]})^i$ by $P((x_{[m]})^{m+1})$ belongs to $SC(x^i)$.

First we claim that if~$y$ is a braid such that $y^i\in SC(x^i)$ for $i=1,\ldots,m$, then the conjugate of~$y^i$ by $\mathfrak p(y^{m+1})$ also belongs to $SC(x^i)$, for $i=1,\ldots,m$. In order to prove this claim, we recall from Theorem~\ref{T:convexity} that if $z,a,b\in B_n$ are braids such that~$z$, $z^a$ and $z^b$ belong to a sliding circuit, then $z^{a\wedge b}$ also belongs to a sliding circuit. In the situation of the claim, we have $y^i\in SC(x^i)$, and $\mathfrak p(y^{m+1})= \iota(y^{m+1})\wedge \iota(y^{-m-1})$. Recall that $\iota(y^{m+1})=\left(y^{m+1}\Delta^{s}\right) \wedge \Delta$ for some integer~$s$. Since the conjugate of~$y^i$ by $y^{m+1}\Delta^{s}$ is $\tau^s(y^i)\in SC(x^i)$, and the conjugate of~$y^i$ by~$\Delta$ is $\tau(y^i)\in SC(x^i)$, Theorem~\ref{T:convexity} shows that the conjugate of~$y^i$ by $\iota(y^{m+1})$ belongs to a sliding circuit. The same argument shows that its conjugate by  $\iota(y^{-m-1})$ also belongs to a sliding circuit. Hence, applying Theorem~\ref{T:convexity} again, the claim is shown.

As an aside, we remark that conjugation by $\mathfrak p(y^{m+1})$ performs a cyclic sliding on~$y^{m+1}$, but it does not perform a cyclic sliding on~$y^i$ for $i<m+1$. Nevertheless, we just showed that the conjugate of~$y^i$ by $\mathfrak p(y^{m+1})$, even if it is not necessarily the cyclic sliding of~$y^i$, belongs to a sliding circuit (which is not necessarily the same sliding circuit~$y^i$ belonged to).

Now, in our situation, $P((x_{[m]})^{m+1})$ is the product of several preferred prefixes, those of iterated cyclic slidings of $(x_{[m]})^{m+1}$, so conjugation by $P((x_{[m]})^{m+1})$ is the composition of several conjugations, by $\alpha_1, \ldots, \alpha_t$, say. We can then apply the above claim several times, taking $y=x_{[m]}^{\alpha_1\cdots \alpha_{k-1}}$ for $k=1,\ldots,t$.  At the first step $y=x_{[m]}$, so $y^i\in SC(x^i)$ for $i=1,\ldots,m$, and $\alpha_1=\mathfrak p((x_{[m]})^{m+1})=\mathfrak p(y^{m+1})$ by definition. By the claim, $(y^{\alpha_1})^i$ belongs to its own sliding circuit for $i=1,\ldots, m$. By induction, if $y=x_{[m]}^{\alpha_1\cdots \alpha_{k-1}}$ for some~$k\geqslant 1$, and~$y^i$ belongs to its own sliding circuit for $i=1,\ldots,m$, then the conjugate of~$y^i$ by~$\alpha_k$ also belong to its own sliding circuit, as $\alpha_k=\mathfrak p(y^{m+1})$. For~$k=t$, as $x_{[m]}^{\alpha_1\cdots \alpha_k}= x_{[m+1]}$, this means that the first~$m$ powers of~$x_{[m+1]}$ belong to their own sliding circuit. Since the~$m+1$st power also belongs to its sliding circuit by construction, the result follows.
\end{proof}

Recall Conjecture~\ref{C:no of slidings} above. Let $T=T_{n,\ell}$ be an upper bound for the number of cyclic slidings necessary to obtain a repetition, starting from a braid in~$B_n$ of canonical length~$\ell$. This bound~$T_{n,\ell}$ is thus conjectured to be a polynomial in~$n$ and~$\ell$. We also recall from~\cite{Epstein} that if~$x$ and~$y$ are two braids, given in left normal form, of canonical length~$\ell_1$ and~$\ell_2$, respectively, the left normal form of~$xy$ can be computed in time $O(\ell_1\ell_2n\log n)$. This comes from the fact that the gcd of two simple elements can be computed in time $O(n\log n)$~\cite{Epstein}, and from the way in which left normal forms are computed.

\begin{corollary}\label{C:computing_one_element}
Given $x\in B_n$ written as a product of~$\ell$ simple elements and its inverses, and given~$m>0$, there is an algorithm that computes an element in $SC^{[m]}(x)$ in time $O(S\ell n\log n)$, where $\displaystyle S=\sum_{i=1}^m{i\:T_{n,i\ell}}$.
\end{corollary}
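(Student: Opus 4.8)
The plan is to analyze the running time of the algorithm implicit in the recursive definition of $x_{[m]}$, whose correctness is the content of the preceding Proposition. To compute $x_{[m]}$ we start from $x_{[0]}=x$ and successively perform, for $k=1,\ldots,m$, the single step $x_{[k]}=(x_{[k-1]})^{P((x_{[k-1]})^k)}$. Each such step decomposes into three tasks: (i) raising the current braid $x_{[k-1]}$ to the $k$th power and putting it in left normal form; (ii) computing the preferred conjugator $P\bigl((x_{[k-1]})^k\bigr)$ by iterating cyclic sliding until the first repetition; and (iii) conjugating $x_{[k-1]}$ by this element and renormalizing. The key point I would make first is a control on canonical lengths: since $x_{[k-1]}$ is a conjugate of $x$ and cyclic sliding does not increase canonical length (it sends a braid into its ultra summit set eventually, and never increases $\ell$ along the way), the canonical length of $x_{[k-1]}$ is at most $\ell$, hence $\ell\bigl((x_{[k-1]})^k\bigr)\leqslant k\ell$. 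Together with the cited fact that multiplying two left normal forms of canonical lengths $\ell_1,\ell_2$ costs $O(\ell_1\ell_2 n\log n)$, task (i) costs $O(k\ell^2 n\log n)$ via repeated multiplication (or better by repeated squaring, but the crude bound suffices).

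Next I would bound task (ii). A single cyclic sliding of a braid of canonical length $\lambda$ requires computing the preferred prefix $\mathfrak p = \iota(\cdot)\wedge\iota((\cdot)^{-1})$, which involves a bounded number of gcd's of simple elements (each $O(n\log n)$) together with one conjugation by a simple element and one renormalization, costing $O(\lambda n\log n)$. By the definition of $T_{n,\lambda}$, the number of cyclic slidings needed before the orbit of a braid of canonical length $\lambda$ repeats is at most $T_{n,\lambda}$, and along the way the canonical length stays $\leqslant\lambda$; so computing the entire preferred conjugator $P$, and then conjugating by it, costs $O(T_{n,\lambda}\,\lambda\, n\log n)$. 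Applying this with $\lambda = k\ell$ bounds task (ii) for step $k$ by $O(T_{n,k\ell}\,k\ell\, n\log n)$. Task (iii) is dominated by this, since $P$ has length at most the total sliding conjugator length, which is again absorbed by the same estimate. Summing over $k=1,\ldots,m$ gives total cost
$$
  O\!\left(\sum_{k=1}^m T_{n,k\ell}\,k\ell\, n\log n\right) = O\!\left(\ell n\log n\sum_{k=1}^m k\,T_{n,k\ell}\right) = O(S\ell n\log n),
$$
with $S=\sum_{i=1}^m i\,T_{n,i\ell}$ as in the statement.

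The main obstacle, and the point requiring care, is the justification that canonical lengths do not blow up along the recursion: one must check both that cyclic sliding never increases $\ell$ (so that $x_{[k-1]}$ stays within canonical length $\ell$ of $x$), which is standard from~\cite{GG1,Gebhardt}, and that the intermediate normal-form computations in task (i) genuinely stay bounded by $k\ell$ rather than growing with each multiplication — here one uses that the left normal form of a product of $k$ copies of a braid of length $\ell$ has length at most $k\ell$, so each partial product is controlled. The remaining estimates are routine bookkeeping with the $O(n\log n)$ cost of simple-element gcd's and the $O(\ell_1\ell_2 n\log n)$ cost of normal-form multiplication quoted from~\cite{Epstein}. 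I would also remark that the factor $k$ inside $S$ (as opposed to just $T_{n,k\ell}$) comes precisely from step (i), the cost of forming the $k$th power, and cannot obviously be removed by the naive method.
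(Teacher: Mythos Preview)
Your proposal follows the same approach as the paper's proof, with two imprecisions worth flagging. First, your justification that $\ell(x_{[k-1]})\leqslant\ell$ is not quite right: $x_{[k-1]}$ is \emph{not} obtained from~$x$ by cyclic sliding (it is obtained by conjugating by preferred conjugators of \emph{powers}), so the non-increase of~$\ell$ under sliding does not apply directly. The correct reason is the preceding Proposition: $x_{[k-1]}\in SC^{[k-1]}(x)\subset SSS(x)$, whence its canonical length is minimal in the conjugacy class. The same issue recurs in task~(iii): to bound the cost of conjugating $x_{[k-1]}$ by~$P$ one simple factor at a time, you need the intermediate conjugates to stay short, and this is exactly the ``claim'' inside the Proposition's proof (each partial conjugate remains in $SC(x)$). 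Second, the paper organises the computation differently: it maintains both $\alpha$ and $\alpha^{i}$ in normal form and conjugates them simultaneously by each $\mathfrak p(\alpha^{i})$, and it carries $x_{[i-1]}^{\,i-1}$ forward from the previous stage, so that forming $x_{[i-1]}^{\,i}$ requires a \emph{single} multiplication of cost $O((i-1)\ell^2 n\log n)$. Your estimate $O(k\ell^2 n\log n)$ for task~(i) via repeated multiplication from scratch is actually off by a factor of~$k$ (the sum $\sum_{j<k} j\ell^2$ gives $O(k^2\ell^2 n\log n)$); this is harmless for the final bound, since it is still absorbed by the sliding term, but it is not the source of the factor~$k$ inside~$S$ as you suggest in your closing remark.
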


\begin{proof}
The algorithm computes $P(x_{[i-1]}^{i})$ and conjugates~$x_{[i-1]}$ by this element (obtaining $x_{[i]}$), for $i=1,\ldots,m$.

We start with~$x$ written as a product of~$\ell$ simple elements and its inverses, and compute its left normal form, which takes time $O(\ell^2n\log n)$~\cite{Epstein}. Now we apply iterated cyclic sliding to~$x$ until the first repetition, which is~$x_{[1]}$. At each step, we have to compute the preferred prefix of an element~$\alpha$, and conjugate~$\alpha$ by it.  Notice that a preferred prefix is the greatest common divisor of two permutation braids: if $\alpha=\Delta^p \alpha_1\cdots \alpha_r$ is in left normal form and $r>0$, then $\mathfrak p(\alpha)=\tau^{-p}(\alpha_1)\wedge \partial(\alpha_r)$. If the left normal form of~$\alpha$ is known, the computation of $\tau^{-p}(\alpha_1)$ and $\partial(\alpha_r)$ takes time~$O(n)$~\cite{Epstein}, and computing their gcd takes time $O(n\log n)$~\cite{Epstein}. Now~$\alpha$ is an iterated cyclic sliding of~$x$, where cyclic sliding never increases the canonical length of an element~\cite{GG1}. Hence the canonical length of~$\alpha$ is at most~$\ell$. The algorithm takes~$\alpha$ in left normal form, and computes the left normal form of its conjugate by $\mathfrak p(\alpha)$. As~$\mathfrak p(\alpha)$ is a simple element, and~$\alpha$ has canonical length at most~$\ell$, this last step takes time $O(\ell n\log n)$~\cite{Epstein}. 
Thus computing~$\mathfrak p(\alpha)$, conjugating~$\alpha$ by it, and calculating the left normal form of the result takes time $O(\ell n\log n)$. This is repeated $T_{n,\ell}$ times, so~$x_{[1]}$ is computed in time $O(T_{n,\ell}\,\ell n\log n)$.

In the following steps of the algorithm, one has $x_{[i-1]}$ and $x_{[i-1]}^{i-1}$ written in left normal form (the case of the previous paragraph is~$i=1$). Notice that the canonical length of $x_{[i-1]}^{i-1}$ is at most $(i-1)\ell$. The algorithm then computes the left normal form of $x_{[i-1]}^{i}$. This computation, obtained from the product of the left normal forms of $x_{[i-1]}$ and $x_{[i-1]}^{i-1}$, takes time $O((i-1)\ell^2 n\log n)$. Now the algorithm computes iterated cyclic slidings of $x_{[i-1]}^{i}$ until the first repetition. More precisely, the algorithm starts with $\alpha=x_{[i-1]}$, and at each step it computes the preferred prefix $\mathfrak p(\alpha^{i})$, and conjugates both~$\alpha$ and~$\alpha^{i}$ by this prefix. The conjugate of~$\alpha$ is set as the new value of~$\alpha$, and the loop is repeated. The loop ends at the first repetition of~$\alpha^{i}$. The complexity of this computation is the same as that of the previous paragraph, but applied to a braid of canonical length~$i\ell$, instead of~$\ell$. Hence, the computation of~$x_{[i]}$ and $x_{[i]}^{i}$ from $x_{[i-1]}$ and $x_{[i-1]}^{i-1}$ takes time $O(T_{n,i\ell}\:i\ell n\log n)$. Adding up the complexities of each loop, we obtain that the whole algorithm takes time $O(\ell^2 n\: \log n)+O(S\ell n\log n)$. As $\ell < T_{n,\ell}\leqslant S$, the result follows.
\end{proof}

We remark that if Conjecture~\ref{C:no of slidings} holds, that is, if $T_{n,\ell}$ is a polynomial in~$n$ and~$\ell$, then the complexity of the algorithm in Corollary~\ref{C:computing_one_element} is polynomial in $n$, $\ell$ and~$m$. As we shall only need to compute one element in $SC^{[m]}(x)$ for $m\leqslant ||\Delta||^3=n^3(n-1)^3/8$ (see Theorem~\ref{T:main}), the complexity in this case will be polynomial in~$n$ and~$\ell$, always provided Conjecture~\ref{C:no of slidings} holds.


\section{Sliding circuits and reduction curves}\label{S:SC and curves}


\subsection{Sliding circuits and round curves}

In this section we shall investigate the properties of the elements belonging
to $SC^{[m]}(x)$, with respect to their canonical reduction systems. The simplest case occurs when this reduction system is made of round curves. The following result assures the existence of these examples

\begin{theorem}\label{T:BGN}{\rm \cite{BGN} (see also~\cite{LL})}
Let $x\in B_n$ be a positive braid whose left normal form is $x_1\cdots x_r$. If~$[\mathcal C]$ is a round curve such that $[\mathcal C]^x$ is also round, then $[\mathcal C]^{x_1\cdots x_i}$ is round for $i=1,\ldots,r$.
\end{theorem}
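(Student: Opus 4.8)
The plan is to reduce the statement to a single-generator step and then argue by induction on the canonical length~$r$. Since $[\mathcal C]^x$ is round and $x=x_1\cdots x_r$, it suffices to prove the following local claim: if $[\mathcal C]$ is a round curve, $s$ is a simple (permutation) braid, and the left normal form of $x$ begins with $s$ (i.e.\ $s=x_1$ and $x=s\cdot x'$ with $s\cdot x_2\cdots x_r$ left-weighted), and if $[\mathcal C]^x$ is round, then $[\mathcal C]^s$ is already round. Granting this claim, one applies it to $x$ to get $[\mathcal C]^{x_1}$ round; then, since $x_2\cdots x_r$ is the left normal form of a positive braid sending the round curve $[\mathcal C]^{x_1}$ to the round curve $[\mathcal C]^x$, one applies the claim again, and so on, obtaining that $[\mathcal C]^{x_1\cdots x_i}$ is round for all $i$.

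First I would set up the geometric picture: a round curve $[\mathcal C]$ is determined by the set of consecutive punctures it encloses, say $\{j, j+1,\ldots, j+k\}$ for the representative where $\mathcal C$ is a geometric circle; more generally roundness of $[\mathcal C]^{\beta}$ means that after applying $\beta$ the image encloses some block of \emph{consecutive} positions. The key point is that a positive braid acting on a round curve either keeps it round or strictly increases its "complexity" (in the sense of Section~\ref{SS:CRSAlmostRound}, the canonical length of the minimal standardizer of the image), and moreover this complexity is monotone under right-multiplication by positive elements when one is in left normal form. I would make this precise using the minimal standardizer: if $[\mathcal C]^{x_1\cdots x_i}$ were \emph{not} round for some $i<r$, let $z$ be its minimal standardizer, so $z\neq 1$ is a positive braid with $[\mathcal C]^{x_1\cdots x_i z}$ round and $z$ a prefix of every positive braid standardizing $[\mathcal C]^{x_1\cdots x_i}$. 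Since $x_{i+1}\cdots x_r$ standardizes $[\mathcal C]^{x_1\cdots x_i}$ (it sends it to the round curve $[\mathcal C]^x$), minimality of $z$ forces $z \preccurlyeq x_{i+1}\cdots x_r$, hence $z\wedge \Delta \preccurlyeq x_{i+1}$, i.e.\ the first factor of the left normal form of $x_{i+1}\cdots x_r$ has a nontrivial common prefix with $z$. The plan is to derive a contradiction with left-weightedness of $x_i x_{i+1}$: the obstruction to roundness of $[\mathcal C]^{x_1\cdots x_i}$ forces $x_i$ to "end" with a generator that $z$, and hence $x_{i+1}$, "starts" with, which is exactly what left-weightedness forbids — unless that generator was already unnecessary, contradicting minimality of the standardizer or of the normal form.

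The main obstacle I anticipate is making the combinatorial bridge between the \emph{geometric} statement (which consecutive blocks of punctures get dragged where, and why a non-round image forces a specific generator at the "seam") and the \emph{Garside-theoretic} statement (left-weightedness as $\partial(x_i)\wedge x_{i+1}=1$). Concretely, one has to show that the minimal standardizer $z$ of $[\mathcal C]^{x_1\cdots x_i}$, restricted to its first simple factor, is "caused" entirely by a prefix of $x_{i+1}$ that $x_i$ could have absorbed; in the four-strand case this is a finite check, but in general it requires a careful analysis of how a permutation braid can fail to send a round curve to a round curve, i.e.\ of the interaction between the block of consecutive punctures enclosed by $[\mathcal C]$ and the permutation underlying $x_1\cdots x_i$. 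I would handle this by passing to the permutation-braid description: a permutation braid sends a round curve enclosing positions $\{a,\ldots,b\}$ to a round curve if and only if the underlying permutation, restricted to $\{a,\ldots,b\}$, is "order-convex" in an appropriate sense, and the minimal standardizer measures the failure of convexity; then left-weightedness of $x_i x_{i+1}$ exactly says that any such failure appearing at stage $i+1$ was not "postponed" from stage $i$, giving the induction. This is essentially the content of~\cite{BGN}, and I would cite their geometric lemma on round curves and cycling/decycling for the hardest step rather than re-deriving it.
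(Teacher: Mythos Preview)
The paper does not prove Theorem~\ref{T:BGN}; it is quoted from~\cite{BGN} (and~\cite{LL}) and used as a black box. So there is no ``paper's proof'' to compare against, and I will assess your argument on its own and against the standard argument in~\cite{LL}.

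Your inductive setup is correct, and the minimal-standardizer approach is exactly the right tool. But there is a genuine gap at the crucial step, and also a mis-statement of what left-weightedness says.

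First, the mis-statement. You write that ``$x_i$ ends with a generator that $x_{i+1}$ starts with, which is exactly what left-weightedness forbids''. That is backwards: left-weightedness of $x_i x_{i+1}$ means precisely that every generator in the starting set of~$x_{i+1}$ \emph{does} lie in the finishing set of~$x_i$; equivalently, $\partial(x_i)\wedge x_{i+1}=1$. The contradiction you need is a nontrivial common prefix of $\partial(x_i)$ and~$x_{i+1}$.

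Second, and more importantly, you do obtain $z\wedge\Delta\preccurlyeq x_{i+1}$, but you never explain why the minimal standardizer~$z$ should be a prefix of~$\partial(x_i)$, and you instead propose an ad hoc ``combinatorial bridge'' via order-convexity of permutations. That detour is unnecessary. The missing observation is one line: since (by the inductive hypothesis, or for $i=1$ by assumption) the curve $[\mathcal C]^{x_1\cdots x_{i-1}}$ is round, and since~$\Delta$ sends round curves to round curves, the simple braid~$\partial(x_i)$ \emph{also} standardizes $[\mathcal C]^{x_1\cdots x_i}$, because
\[
[\mathcal C]^{x_1\cdots x_i}\cdot\partial(x_i)=[\mathcal C]^{x_1\cdots x_{i-1}\Delta}
\]
is round. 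By the prefix property of the minimal standardizer~\cite{LL} this gives $z\preccurlyeq\partial(x_i)$; in particular $z$ is simple. Combining with $z\preccurlyeq x_{i+1}\cdots x_r$ (hence $z\preccurlyeq x_{i+1}$ since the right-hand side is in left normal form) yields
\[
z\ \preccurlyeq\ \partial(x_i)\wedge x_{i+1}\ =\ 1,
\]
so $z=1$ and the intermediate curve was round after all. This is essentially the argument in~\cite{LL}; once you see that $\partial(x_i)$ standardizes, no geometric analysis of permutation braids is needed.
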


In other words, if the roundness of a curve is preserved by a braid~$x$, then it is preserved by each factor in the left normal form of~$x$.  Since $\Delta^{\pm 1}$ preserves the roundness of every curve, the above result can be applied to every braid, not necessarily positive.  This is used in~\cite{BGN} to show that, if a braid preserves a round curve, its cycling and its decycling also preserve round curves. This immediately implies that for every reducible braid~$x$, there is some element in its super summit set $SSS(x)$ which preserves a round curve~\cite{BGN}. Clearly, one can replace $SSS(x)$ by $USS(x)$ in the previous statement. Even better, one can replace it by $SC(x)$, as we will now see, but the proof of this fact is slightly different: we need to show the following result, concerning invariant families of round curves.

\begin{proposition}\label{P:sliding_preserves_roundness}
Let $x\in B_n$, and let~$\mathcal F$ be a family round curves such that $[\mathcal F]^x=[\mathcal F]$. Then $[\mathcal F]^{\mathfrak p(x)}$ is also a family of round curves.   Hence, if~$x$ preserves a family of round curves, then so does~$\mathfrak s(x)$.
\end{proposition}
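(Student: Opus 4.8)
The statement to prove is Proposition~\ref{P:sliding_preserves_roundness}: if $x$ preserves a family $\mathcal F$ of round curves, then $\mathfrak p(x)$ carries $\mathcal F$ to another family of round curves, and consequently $\mathfrak s(x)$ again preserves a family of round curves. The plan is to reduce the statement about $\mathfrak p(x)=\iota(x)\wedge\iota(x^{-1})$ to two applications of Theorem~\ref{T:BGN} (the Benardete--Guti\'errez--Nitecki result), one for $\iota(x)$ and one for $\iota(x^{-1})$, together with the fact that the gcd of two prefixes each preserving roundness still preserves roundness. The very last sentence is then immediate: conjugating $x$ by $\mathfrak p(x)$ turns $[\mathcal F]$ into $[\mathcal F]^{\mathfrak p(x)}$, which is round by the first part, and this new family is preserved by $\mathfrak s(x)=\mathfrak p(x)^{-1}x\,\mathfrak p(x)$ because $[\mathcal F]^{\mathfrak p(x) \mathfrak s(x)}=[\mathcal F]^{x\,\mathfrak p(x)}=[\mathcal F]^{\mathfrak p(x)}$.

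First I would handle $\iota(x)$. Write the left normal form $x=\Delta^p x_1\cdots x_r$; since $\Delta^{\pm1}$ preserves roundness of every curve (indeed $\Delta$ acts on round curves by a rotation-type symmetry), the braid $x'=\Delta^{-p}x=x_1\cdots x_r$ still sends $[\mathcal F]$ to a family of round curves, and in fact $[\mathcal F]^{x'}=[\mathcal F]^{\Delta^{-p}x}$ where $[\mathcal F]^{\Delta^{-p}}$ is round; more cleanly, note $[\mathcal F]^x=[\mathcal F]$ with $\mathcal F$ round forces $[\mathcal F]^{\Delta^{-p}}$ to be a round family preserved by $x_1\cdots x_r$. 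Apply Theorem~\ref{T:BGN} to the positive braid $x_1\cdots x_r$ and this round family: every prefix $x_1\cdots x_i$ preserves roundness, in particular $x_1$ does, hence $\iota(x)=\tau^{-p}(x_1)$ — which differs from $x_1$ only by conjugation by a power of $\Delta$ — carries the round family $[\mathcal F]^{\Delta^{-p}}$ to a round family; translating back through $\Delta^{p}$ shows $[\mathcal F]^{\iota(x)}$ is round. (When $r=0$, $\iota(x)=1$ and there is nothing to prove.) The same argument applied to $x^{-1}$, which also preserves $[\mathcal F]$, shows $[\mathcal F]^{\iota(x^{-1})}$ is round.

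Now I need the key combinatorial step: if $a$ and $b$ are simple elements such that $[\mathcal F]^a$ and $[\mathcal F]^b$ are both round families, then $[\mathcal F]^{a\wedge b}$ is a round family. Since $a\wedge b$ is a prefix of $a$, write $a=(a\wedge b)\,c$ with $c$ positive and simple-in-the-appropriate-sense; the point is that $a\wedge b$ is a prefix of both $a$ and $b$, and one wants to show roundness is "detected" by common prefixes. The cleanest route is to invoke Theorem~\ref{T:BGN} once more: consider the positive braid $a$ in its left normal form (a single simple factor, or its factorization), but that only gives prefixes of $a$, not of $a\wedge b$ specifically. Instead I would argue directly on the punctured disc: $[\mathcal F]^{a\wedge b}$ is round if and only if there is a standardizing argument — use the minimal standardizer language from the excerpt. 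Let $y$ be the minimal standardizer of $[\mathcal F]^{a\wedge b}$, a positive braid with $[\mathcal F]^{(a\wedge b)y}$ round and $y$ minimal, in fact a prefix of any positive braid $z$ with $[\mathcal F]^{(a\wedge b)z}$ round. Both $c_a$ (with $a=(a\wedge b)c_a$) and $c_b$ (with $b=(a\wedge b)c_b$) are such $z$'s, since $[\mathcal F]^a$ and $[\mathcal F]^b$ are round; hence $y\preccurlyeq c_a$ and $y\preccurlyeq c_b$, so $y\preccurlyeq c_a\wedge c_b$. But $(a\wedge b)c_a\wedge(a\wedge b)c_b=(a\wedge b)(c_a\wedge c_b)$ while also this equals $a\wedge b$ (since $\{c_a,c_b\}$ has gcd $1$ by definition of $a\wedge b$); thus $c_a\wedge c_b=1$, forcing $y=1$, i.e. $[\mathcal F]^{a\wedge b}$ is already round.

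\textbf{Main obstacle.} The delicate point is the last step — the interaction between the lattice operation $\wedge$ on braids and the geometric operation "carry this family to a round family". Theorem~\ref{T:BGN} is phrased for prefixes coming from the left normal form of a single braid, whereas $\mathfrak p(x)=\iota(x)\wedge\iota(x^{-1})$ is a gcd of two braids computed from $x$ and $x^{-1}$, and it is not a priori a prefix of a single braid whose roundness-preservation is known. The minimal-standardizer argument sketched above is what bridges this gap, and getting the uniqueness/prefix property of the minimal standardizer to interact correctly with $\wedge$ is where the real content lies; I expect the authors' proof to either run this standardizer argument or, more likely, to cite an analogous convexity statement for round curves parallel to Theorem~\ref{T:convexity} for sliding circuits. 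One should also be slightly careful that "family of round curves" means a family each of whose members is round and which is preserved setwise (not pointwise) — but since $\Delta^2$ preserves every curve and all the conjugators involved are between such families, the setwise statements go through verbatim.
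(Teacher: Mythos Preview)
Your argument is correct and the gcd step via the minimal standardizer is exactly the content of Lemma~\ref{L:gcd_round_curves} (attributed to~\cite{LL}), so the approach is sound. However, the paper's proof takes a different and more self-contained route that avoids the gcd lemma entirely. The paper observes that the left normal form of the product $\varphi(x)\iota(x)=x_r\,\tau^{-p}(x_1)$ is $y_1y_2$ with first factor $y_1=x_r\,\mathfrak p(x)$ --- this is immediate from the definition of left-weightedness, since $\mathfrak p(x)=\tau^{-p}(x_1)\wedge\partial(x_r)$ is precisely the largest prefix of $\tau^{-p}(x_1)$ that can be absorbed into $x_r$ while remaining simple. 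One first applies Theorem~\ref{T:BGN} to $x_1\cdots x_r$ to see that $[\mathcal F_1]:=[\mathcal F]^{\Delta^p x_1\cdots x_{r-1}}$ and $[\mathcal F_2]:=[\mathcal F]^{\tau^{-p}(x_1)}$ are round; then $x_r\,\tau^{-p}(x_1)$ sends $[\mathcal F_1]$ to $[\mathcal F_2]$, so a second application of Theorem~\ref{T:BGN} to this two-factor braid gives that $[\mathcal F_1]^{y_1}=[\mathcal F_1]^{x_r\mathfrak p(x)}=[\mathcal F]^{\mathfrak p(x)}$ is round. Thus the paper uses only BGN, and the trick is to locate $\mathfrak p(x)$ inside a normal form rather than to treat it as a gcd.

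Your approach is more modular --- it cleanly separates ``initial factors preserve roundness'' from ``roundness-preservation is closed under $\wedge$'' --- at the price of importing the~\cite{LL} prefix property of the minimal standardizer. The paper's argument is slicker and entirely internal to BGN, at the cost of a slightly opaque normal-form identity. One small wrinkle in your write-up of step~(i): the family $[\mathcal F]^{\Delta^{-p}}$ is not in general \emph{preserved} by $x_1\cdots x_r$ (rather, $x_1\cdots x_r$ sends $[\mathcal F]^{\Delta^{p}}$ to $[\mathcal F]$, and these differ when $p$ is odd); but since BGN only requires round-to-round and not invariance, your conclusion that $[\mathcal F]^{\iota(x)}$ is round survives once the signs are tidied up.
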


\begin{proof}
We can assume~$r>0$. Let $\Delta^p x_1\cdots x_r$ be the left normal form of~$x$. By Theorem~\ref{T:BGN} applied to each particular curve of~$\mathcal F$, one has that $[\mathcal F]^{\Delta^p x_1}$ is a family of round curves, and since $\Delta^p x_1=\tau^{-p}(x_1)\Delta^p$, it follows that the curves of $[\mathcal F]^{\tau^{-p}(x_1)}$ are round. Let~$\mathcal F_2$ be a family of curves such that $[\mathcal F_2]=[\mathcal F]^{\tau^{-p}(x_1)}$. In the same way, Theorem~\ref{T:BGN} tells us that the curves of $[\mathcal F]^{\Delta^p x_1\cdots x_{r-1}}$ are round. Let~$\mathcal F_1$ be such that $[\mathcal F_1]=[\mathcal F]^{\Delta^p x_1\cdots x_{r-1}}$. Notice that $[\mathcal F_1]^{x_r} =[\mathcal F]^{(\Delta^p x_1\cdots x_{r-1}) x_r} = [\mathcal F]^x = [\mathcal F]$.

We then have $[\mathcal F_1]^{x_r \tau^{-p}(x_1)}=[\mathcal F_2]$, where~$\mathcal F_1$ and~$\mathcal F_2$ are families of round curves. Now, by definition, the left normal form of $x_r \tau^{-p}(x_1)$ is equal to $y_1y_2$, where $y_1=x_r\mathfrak p(x)$.  By Theorem~\ref{T:BGN} again, we obtain that the curves of $[\mathcal F_1]^{y_1}$ are round. But $[\mathcal F_1]^{y_1} = [\mathcal F_1]^{x_r \mathfrak p(x)}= [\mathcal F]^{\mathfrak p(x)}$, hence $[\mathcal F]^{\mathfrak p(x)}$ is a family of round curves, as we wanted to show.
\end{proof}

\begin{corollary}\label{C:round curves in SC^m}
For every reducible braid $x\in B_n$ and every~$m>0$, there is some $y\in SC^{[m]}(x)$ such that $CRS(y)$ consists of round curves. Moreover, all elements in the sliding circuit of~$y$ satisfy the same property.
\end{corollary}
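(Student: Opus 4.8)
The plan is to combine Proposition~\ref{P:sliding_preserves_roundness} with the earlier result (Theorem~\ref{T:BGN} and the remarks following it) that every reducible braid has a conjugate preserving a family of round curves, and with the description of $SC^{[m]}(x)$ via iterated cyclic sliding of powers given in Section~\ref{S:SC and powers}. The key observation is that preserving a family of round curves is a property that is stable under cyclic sliding, and that it behaves well when we pass to powers and conjugate by preferred conjugators of powers.

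First I would start from an arbitrary reducible braid $x$, and recall that by the classical argument of Benardete--Guti\'errez--Nitecki (Theorem~\ref{T:BGN} plus cycling/decycling), the conjugacy class of $x$ contains an element $x'$ which preserves a family $\mathcal{F}$ of round curves. (Here one should be slightly careful: what is preserved is $CRS(x')$ up to isotopy; since $x'$ is reducible and non-periodic the canonical reduction system is nonempty, and one can arrange the essential reduction curves to be round by conjugating, so I will assume $CRS(x')$ itself consists of round curves.) Next, I would run the construction of $x_{[m]}$ from Section~\ref{S:SC and powers} starting at $x'$ instead of $x$: that is, set $y_{[0]}=x'$ and $y_{[j+1]}=(y_{[j]})^{P((y_{[j]})^{j+1})}$. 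By the Proposition preceding this corollary, $y_{[m]}\in SC^{[m]}(x')=SC^{[m]}(x)$. It remains to check that $CRS(y_{[m]})$ still consists of round curves.

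The main step is therefore to show that roundness of the canonical reduction system is preserved under each of the conjugations used in this construction. Each $P((y_{[j]})^{j+1})$ is a product of preferred prefixes $\mathfrak{p}(\alpha)$ of iterated cyclic slidings $\alpha$ of the power $(y_{[j]})^{j+1}$. If $y_{[j]}$ preserves a family $\mathcal{G}$ of round curves, then so does $(y_{[j]})^{j+1}$ (the same family, since a power acts by iterating the homeomorphism), and hence so does every iterated cyclic sliding of $(y_{[j]})^{j+1}$, by repeated application of Proposition~\ref{P:sliding_preserves_roundness}; moreover that Proposition tells us that after conjugating by the relevant $\mathfrak{p}(\alpha)$ the image family is again round. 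Chaining these conjugations, the family $[\mathcal{G}]$ is carried to a family of round curves by $P((y_{[j]})^{j+1})$, so $y_{[j+1]}$ preserves a family of round curves. Since the canonical reduction system of a braid is precisely the set of essential reduction curves, and since a conjugating element sends $CRS$ to $CRS$ and sends round families to round families along this chain, we conclude by induction on $j$ that $CRS(y_{[m]})$ consists of round curves. Finally, the last sentence of the corollary follows because cyclic sliding moves $y_{[m]}$ around its sliding circuit by conjugations of the form $\mathfrak{p}(\cdot)$, and Proposition~\ref{P:sliding_preserves_roundness} guarantees roundness is preserved at each such step, so every element of the sliding circuit of $y_{[m]}$ has round canonical reduction system.

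The step I expect to be the main obstacle is the bookkeeping around ``$CRS$ consists of round curves'' versus ``preserves a family of round curves'': one needs that when a braid $z$ preserves some family of round curves, its \emph{canonical} reduction curves $CRS(z)$ can themselves be taken round, and that this property transfers correctly along the conjugations. This requires knowing that the outermost curves of any invariant round family, together with the round structure, are compatible with essentiality — essentially that a family of round curves invariant under $z$ contains, or can be refined to contain, the canonical reduction system in round position. This is standard but should be stated carefully, possibly citing~\cite{BGN} or~\cite{LL}; it is the only point where the argument is not a direct formal consequence of Proposition~\ref{P:sliding_preserves_roundness}.
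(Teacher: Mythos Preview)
Your argument is essentially the paper's own proof: start from a conjugate $x'$ whose $CRS$ is round, run the $y_{[j]}$ construction from Section~\ref{S:SC and powers}, and invoke Proposition~\ref{P:sliding_preserves_roundness} at each cyclic sliding step. The concern you flag at the end is not a genuine obstacle provided you track $CRS(y_{[j]})$ itself as the invariant round family throughout, using the standard facts $CRS(z^k)=CRS(z)$ for $k\neq 0$ and $CRS(z^\alpha)=[CRS(z)]^\alpha$; this is exactly how the paper proceeds, so no separate ``refinement'' argument is needed (and the initial conjugate $x'$ can be obtained directly by choosing any braid sending $CRS(x)$ to a round family, rather than via cycling/decycling).
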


\begin{proof}
The canonical reduction system $CRS(x)$ is a family of disjoint simple curves on the punctured disc. Hence some orientable automorphism of the punctured disc relative to the boundary, will send it to a collection of (possibly nested) round curves.  This automorphism corresponds to a braid $\gamma\in B_n$. In other words, there is some $\gamma\in B_n$ such that $[CRS(x)]^\gamma$ consists of round curves. It is well known that $[CRS(x)]^\gamma = [CRS(x^\gamma)]$, hence $z=x^\gamma$ is a conjugate of~$x$ whose canonical reduction system consists of round curves.

Now recall that $z_{[m]}$, which is the conjugate of~$z$ by $P(z)P((z_{[1]})^2)P((z_{[2]})^3)\cdots P((z_{[m-1]})^m)$, belongs to $SC^{[m]}(z)=SC^{[m]}(x)$. We will show that all the curves in $CRS(z_{[m]})$ are round circles by induction on~$m$. We know that this is true for~$m=0$ since $z_{[0]}=z$, so we assume $CRS(z_{[m-1]})$ consists of round curves for some~$m> 0$.

In order to compute $z_{[m]}$, we conjugate $z_{[m-1]}$ by $P((z_{[m-1]})^{m})$. Recall that the canonical reduction system of an element coincides with the canonical reduction system of each nonzero power, hence $CRS((z_{[m-1]})^{m})$ consists of round curves. Applying iterated cyclic sliding to $(z_{[m-1]})^{m}$ until the first repetition, that is, conjugating it by $P((z_{[m-1]})^{m})$, one obtains $(z_{[m]})^{m}$. By Proposition~\ref{P:sliding_preserves_roundness}, all curves in $CRS((z_{[m-1]})^{m})$ keep their roundness after each application of~$\mathfrak s$. Hence all curves in $CRS((z_{[m]})^{m})=CRS(z_{[m]})$ are round, as we wanted to show.

We have then shown that there is some $y\in SC^{[m]}(x)$ all of whose reduction curves are round. By Proposition~\ref{P:sliding_preserves_roundness} again, the same happens for every element obtained by applying iterated cyclic sliding to~$y$, that is, for every element in the sliding circuit of~$y$.
 \end{proof}

Notice that the above proof does not provide an algorithm to find~$y$, since we do not know a priori which is the braid~$\gamma$ that conjugates~$x$ to~$z$. Nevertheless, since $SC^{[m]}(x)$ is a finite set, one can compute the whole $SC^{[m]}(x)$ and check for each element whether it preserves some family of round curves. In this way one can find a reduction curve for~$y$, and then for~$x$.

The computation of the whole set $SC^{[m]}(x)$, starting from a single element, parallels the usual constructions given in~\cite{EM,BGG1,GG1}, so we will skip it here. For our purposes, it suffices to know that there is one element~$y$ in $SC^{[m]}(x)$ all of whose essential curves are round. Such elements have a particularly nice behavior with respect to normal forms, as it is shown in~\cite{LL} and~\cite{G2009}.

\begin{lemma}\label{L:lnf_round_curves}{\rm (see for instance~\cite{LL})}
Let~$y\in B_n$, and let~$\mathcal F$ be a family of round curves such that $\mathcal F^y$ is also round. Suppose that~$y$ is a positive braid, and let $y_1\cdots y_r$ be its left normal form, where some of the initial factors may be equal to~$\Delta$. Let $\mathcal C\in \mathcal F\cup \partial(D)$. For $i=1,\ldots,r$, denote~$[\mathcal C_i]=[\mathcal C]^{y_1\cdots y_{i-1}}$ and $[\mathcal F_i]=[\mathcal F]^{y_1\cdots y_{i-1}}$. Then the left normal form of $y_{[\mathcal C\in \mathcal F]}$ is precisely ${y_1}_{[\mathcal C_1\in \mathcal F_1]}\: {y_2}_{[\mathcal C_2\in \mathcal F_2]} \cdots {y_r}_{[\mathcal C_r\in \mathcal F_r]}$.   In this normal form, some of the initial factors could be half twists, and some of the final factors could be trivial.
\end{lemma}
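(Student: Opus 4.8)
The plan is to proceed by induction on the canonical length $r$ of $y$, reducing everything to the single-factor case, and in that case to exploit the compatibility of the subbraid operation with products together with Theorem~\ref{T:BGN}. The base case $r=1$ is the heart of the matter: given a single simple factor $s=y_1$ (possibly equal to $\Delta$) with $[\mathcal F]^s$ a family of round curves, I must show that $s_{[\mathcal C\in\mathcal F]}$ is itself a simple element. This follows from the geometric description of $s$ as a single round of the puncture dance associated to a permutation braid (as recalled in Section~\ref{SS:CRSAlmostRound}): since $s$ permutes the round curves of $\mathcal F$ to round curves, the punctures inside a given curve $\mathcal C$ of $\mathcal F$ move, under $s$, to an interval of consecutive positions, and the induced motion on the ``chosen representatives'' interior to $\mathcal C$ (recorded by the index set $I$ in the definition of $s_{[\mathcal C\in\mathcal F]}$) is again a single clockwise pass in which no two of these strands cross more than once. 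Hence $s_{[\mathcal C\in\mathcal F]}$ is simple. The only subtlety is the bookkeeping around the statement's parenthetical remarks: when $s=\Delta$, the factor $s_{[\mathcal C\in\mathcal F]}$ is again a half twist (on the appropriate number of strands), and a nontrivial simple $s$ may still induce a trivial factor on a particular $\mathcal C$ — both of these are exactly the ``initial factors could be half twists, final factors could be trivial'' caveats, so they cost nothing beyond careful wording.

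Next I would establish that the subbraid/component operation is multiplicative along the normal form, i.e. that $(y y')_{[\mathcal C\in\mathcal F]} = y_{[\mathcal C\in\mathcal F]}\cdot (y')_{[\mathcal C^y\in \mathcal F^y]}$ whenever $\mathcal F$, $\mathcal F^y$ and $\mathcal F^{yy'}$ are all round. This is essentially the definition of the subbraid $(\widehat x)_I$ as ``the strands starting at $I$, crossing as they do in $\widehat x$'': concatenating the puncture dances of $y$ and $y'$ concatenates the corresponding restricted dances, and the choice of $I$ propagates correctly precisely because each $\mathcal F_i$ is round so that the ``enclosed punctures'' and the ``chosen representatives'' are intervals at every stage. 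Granting this, an induction on $r$ gives
$$
  y_{[\mathcal C\in\mathcal F]} = {y_1}_{[\mathcal C_1\in\mathcal F_1]}\,{y_2}_{[\mathcal C_2\in\mathcal F_2]}\cdots{y_r}_{[\mathcal C_r\in\mathcal F_r]},
$$
and by the base case each factor on the right is simple.

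It then remains to check that this product is in fact the \emph{left normal form}, i.e. that consecutive pairs ${y_i}_{[\mathcal C_i\in\mathcal F_i]}\,{y_{i+1}}_{[\mathcal C_{i+1}\in\mathcal F_{i+1}]}$ are left-weighted. Here I would use the characterization of left-weightedness via finishing/starting sets of Artin generators: $y_iy_{i+1}$ being left-weighted means no generator $\sigma_k$ can be slid from the start of $y_{i+1}$ to the end of $y_i$. Since $\mathcal F_i$ is round, the strands involved in $\mathcal C$ occupy a genuine sub-interval of $\{1,\dots,n\}$ at the relevant time, so a generator that \emph{could} be slid across inside the sub-disc bounded by $\mathcal C_i$ would correspond to an honest generator slidable across in $y_i y_{i+1}$ itself, contradicting left-weightedness of $y$. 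I would phrase this using the $\partial(\cdot)\wedge(\cdot)=1$ criterion and the fact that, for round $\mathcal F$, the relevant meet on the sub-disc is the restriction of the meet on the whole disc. The main obstacle — and the step deserving the most care — is this last point: making precise that "taking the component associated to a round curve" commutes with the lattice operations $\wedge$, $\partial$ on the appropriate strand subsets, so that weightedness is inherited rather than merely plausible. Once that is nailed down (and it is, as cited, done in \cite{LL} and \cite{G2009}), the lemma follows.
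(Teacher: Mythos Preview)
The paper does not actually prove this lemma: it is stated with the parenthetical ``(see for instance~\cite{LL})'' and no proof is given, the result being quoted from \cite{LL} (and the surrounding machinery from \cite{G2009}). So there is no in-paper argument to compare your proposal against.

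That said, your outline is essentially the standard proof as it appears in those references: show that the component map takes simple elements to simple elements when the curves are round, show that it is multiplicative along a product whose intermediate curve families stay round (which is guaranteed by Theorem~\ref{T:BGN}), and then check that left-weightedness of $y_iy_{i+1}$ descends to left-weightedness of the restricted factors. You have correctly isolated the only genuinely nontrivial step, namely that taking the component associated to a round curve commutes with $\wedge$ and $\partial$, so that $\partial(y_i)\wedge y_{i+1}=1$ forces $\partial\big((y_i)_{[\mathcal C_i\in\mathcal F_i]}\big)\wedge (y_{i+1})_{[\mathcal C_{i+1}\in\mathcal F_{i+1}]}=1$. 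This is exactly Lemma~\ref{L:gcd_round_curves} (for $\wedge$) and the companion statement for complements in \cite{G2009}; once you invoke those, your sketch is complete and correct.
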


\begin{lemma}\label{L:gcd_round_curves}
Let $x,y\in B_n$ be braids, let~$\mathcal F$ be a family of round curves, and let  $[\mathcal C]\in [\mathcal F]\cup \partial(D)$. Suppose that~$\mathcal F^x$ and $\mathcal F^y$ are round. Then $\mathcal F^{x\wedge y}$ is also round, and $(x\wedge y)_{[\mathcal C \in \mathcal F]}= x_{[\mathcal C\in \mathcal F]}\wedge y_{[\mathcal C\in \mathcal F]}$.
\end{lemma}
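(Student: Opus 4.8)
\textbf{Proof proposal for Lemma~\ref{L:gcd_round_curves}.}

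The plan is to reduce everything to the two lattice orders on $B_n$ and the ``functoriality'' of the component construction that is encoded in Lemma~\ref{L:lnf_round_curves}. First I would prove that $\mathcal F^{x\wedge y}$ is round. Since $x\wedge y$ is a prefix of $x$, write $x = (x\wedge y)\, a$ with $a$ positive; similarly $y = (x\wedge y)\, b$ with $b$ positive and $a\wedge b = 1$. The curve $[\mathcal F]^{x\wedge y}$ is obtained from the round family $[\mathcal F]$ by the positive braid $x\wedge y$, and applying the positive braids $a$ and $b$ respectively sends it to the round families $\mathcal F^x$ and $\mathcal F^y$. So it suffices to show: if a positive braid $x\wedge y = w$ has the property that both $wa$ and $wb$ (with $a\wedge b=1$) send a fixed round family to round families, then $w$ itself sends $\mathcal F$ to a round family. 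In fact I would prefer to phrase this via the minimal standardizer: let $z$ be the minimal standardizer of $[\mathcal F]^{w}$, i.e.\ the shortest positive braid taking $[\mathcal F]^{w}$ to round; then $z \preccurlyeq a$ and $z \preccurlyeq b$ by minimality (since $wa$ and $wb$ both standardize $[\mathcal F]^w$, any positive braid standardizing it is a common prefix of $a$ and $b$ — this is exactly the prefix property of minimal standardizers recalled in Section~\ref{S:reduction_curves}), hence $z \preccurlyeq a\wedge b = 1$, so $z=1$ and $[\mathcal F]^w$ is already round. This simultaneously proves roundness of $\mathcal F^{x\wedge y}$ and will be reused below.

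Next, for the component identity, I would use Lemma~\ref{L:lnf_round_curves} applied to the positive braids $a$ and $b$ based at the round family $\mathcal G := \mathcal F^{x\wedge y}$ (now known to be round). Put $w = x\wedge y$. One has $x = wa$, and since $[\mathcal G]^a = [\mathcal F]^x$ is round, Lemma~\ref{L:lnf_round_curves} gives that $x_{[\mathcal C\in\mathcal F]}$ factors as $w_{[\mathcal C\in\mathcal F]}\cdot a_{[\mathcal C'\in\mathcal G]}$ where $\mathcal C' = \mathcal C^{w}$, and moreover this factorization is compatible with left normal forms; similarly $y_{[\mathcal C\in\mathcal F]} = w_{[\mathcal C\in\mathcal F]}\cdot b_{[\mathcal C'\in\mathcal G]}$. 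Thus $x_{[\mathcal C\in\mathcal F]}$ and $y_{[\mathcal C\in\mathcal F]}$ have a common prefix $w_{[\mathcal C\in\mathcal F]}$, so their gcd is $w_{[\mathcal C\in\mathcal F]}\cdot\big(a_{[\mathcal C'\in\mathcal G]} \wedge b_{[\mathcal C'\in\mathcal G]}\big)$. It therefore remains to show $a_{[\mathcal C'\in\mathcal G]} \wedge b_{[\mathcal C'\in\mathcal G]} = 1$, i.e.\ that passing to the $\mathcal C'$-component is ``injective enough'' on the gcd: from $a\wedge b = 1$ we must deduce that the corresponding components have trivial gcd. This is the one genuinely non-formal point; I expect it follows because $a\wedge b=1$ means $a$ and $b$ start with different generators (more precisely their sets of starting generators are disjoint), and — using the explicit description of the component as a subbraid $(\widehat{\,\cdot\,})_I$ together with Lemma~\ref{L:lnf_round_curves} identifying the first normal-form factor of the component with $\sigma$-generators that lie ``inside'' the relevant disk — a generator that appears at the start of the component of $a$ but also at the start of the component of $b$ would have to appear at the start of both $a$ and $b$, contradicting $a\wedge b=1$. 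Equivalently, one can argue directly: if $s := a_{[\mathcal C'\in\mathcal G]}\wedge b_{[\mathcal C'\in\mathcal G]}$ were nontrivial, then $s$ would be a nontrivial simple prefix of both component braids, and lifting $s$ back to a positive braid acting on $\mathcal G$ (padding by identity outside $D_{\mathcal C'}$) would give a nontrivial common prefix of $a$ and $b$, again contradicting $a\wedge b=1$.

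Putting the pieces together: $\mathcal F^{x\wedge y}$ is round by the first paragraph, and
$(x\wedge y)_{[\mathcal C\in\mathcal F]} = w_{[\mathcal C\in\mathcal F]} = x_{[\mathcal C\in\mathcal F]}\wedge y_{[\mathcal C\in\mathcal F]}$
by the second and third paragraphs. The main obstacle, as indicated, is the compatibility of the component/subbraid construction with the lattice operation — i.e.\ that taking components does not merge the distinct ``starting directions'' of $a$ and $b$. I would handle this by working with the first factor of the left normal form via Lemma~\ref{L:lnf_round_curves} and the prefix characterization of $\wedge$ in terms of starting sets of generators, rather than by any direct topological argument; everything else is a formal manipulation of prefixes and minimal standardizers.
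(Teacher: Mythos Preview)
The paper does not actually prove this lemma: it simply cites Lee--Lee for the roundness claim and \cite{G2009} for the component identity. Your proposal, by contrast, sketches a genuine argument, so a direct comparison with the paper's ``proof'' is not very informative; what matters is whether your argument would go through.

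Your first paragraph (roundness of $\mathcal F^{x\wedge y}$ via the prefix property of the minimal standardizer) is correct and is exactly the Lee--Lee argument.

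For the component identity, your outline is also the right one: multiplicativity of the component map gives the factorisations $x_{[\mathcal C\in\mathcal F]}=w_{[\mathcal C\in\mathcal F]}\,a_{[\mathcal C'\in\mathcal G]}$ and $y_{[\mathcal C\in\mathcal F]}=w_{[\mathcal C\in\mathcal F]}\,b_{[\mathcal C'\in\mathcal G]}$, reducing the problem to showing $a_{[\mathcal C'\in\mathcal G]}\wedge b_{[\mathcal C'\in\mathcal G]}=1$ from $a\wedge b=1$. Two remarks. First, this multiplicativity is a direct consequence of the definition of the component as a subbraid $(\cdot)_I$ (choosing compatible representative strands), rather than of Lemma~\ref{L:lnf_round_curves}, which is specifically about \emph{left normal form} factorisations. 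Second, of your two justifications for the crux, route~(a) is the sound one and can be completed cleanly: by Lemma~\ref{L:lnf_round_curves} you may replace $a$ and $b$ by their initial simple factors, and for a \emph{simple} braid $a_1$ sending $\mathcal G$ to a round family, each generator $\sigma_j'$ of the component braid group satisfies $\sigma_j'\preccurlyeq (a_1)_{[\mathcal C'\in\mathcal G]}$ if and only if $\sigma_{f(j)}\preccurlyeq a_1$, where $f$ depends only on $\mathcal G$ (concretely, $f(j)$ is the index of the boundary between the $j$th and $(j{+}1)$st blocks of $D_{\mathcal C'}$). Disjointness of starting sets then transfers as claimed. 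Route~(b), however, is not quite right as you phrase it: ``padding $s$ by the identity outside $D_{\mathcal C'}$'' does \emph{not} in general produce a prefix of $a$ when $s$ has several letters, because exterior crossings of $a$ can be interleaved with the interior ones. What does work---and is all you need---is the single-generator version, which is exactly what route~(a) establishes.
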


\begin{proof}
The first sentence is shown be Lee and Lee~\cite{LL}, and the second one in~\cite{G2009}.
\end{proof}

\begin{lemma}\label{L:iota_round_curves}{\rm \cite{G2009}}
Let $y\in B_n$, and let~$\mathcal F$ be a family of round curves such that $\mathcal F^y$ is also round. Let $\mathcal C\in \mathcal F\cup\partial (D)$. Then~$\iota(y)$ preserves the roundness of~$[\mathcal F]$, and $\iota(y)_{[\mathcal C\in \mathcal F]}$ is either a half twist or equal to $\iota(y_{[\mathcal C\in \mathcal F]})$.
\end{lemma}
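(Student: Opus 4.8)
The plan is to reduce everything to the Garside-theoretic identity $\iota(y)=\Delta\wedge\bigl(y\Delta^{-\inf(y)}\bigr)$ and then invoke Lemma~\ref{L:gcd_round_curves}. Both $\Delta$ and $y\Delta^{-\inf(y)}$ send $[\mathcal F]$ to families of round curves: for $\Delta$ this is immediate, and for $y\Delta^{-\inf(y)}$ it holds because $\mathcal F^{y}$ is round by hypothesis while $\Delta^{\pm 1}$ preserves roundness. Applying Lemma~\ref{L:gcd_round_curves} to the pair $\Delta$, $y\Delta^{-\inf(y)}$ therefore gives at once that $\iota(y)=\Delta\wedge\bigl(y\Delta^{-\inf(y)}\bigr)$ also sends $[\mathcal F]$ to round curves (this is the first assertion of the lemma), together with the identity
\[
\iota(y)_{[\mathcal C\in\mathcal F]}=\Delta_{[\mathcal C\in\mathcal F]}\ \wedge\ \bigl(y\Delta^{-\inf(y)}\bigr)_{[\mathcal C\in\mathcal F]}.
\]

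The second step is to evaluate the two components on the right. Since $\mathcal F$ is already a family of round curves, its minimal standardizer is trivial, so each component associated to $\mathcal C$ in $\mathcal F$ is literally a subbraid in the sense of the definition above. From that description one reads off: that $\Delta_{[\mathcal C\in\mathcal F]}$ is the half twist $\delta$ of the sub-punctured-disk $D_{\mathcal C}$; that forming $(\cdot)_{[\mathcal C\in\mathcal F]}$ is compatible with right multiplication by powers of $\Delta$, the \emph{same} index set being used for $y$ and for $y\Delta^{-\inf(y)}$, and the component of $\Delta^{k}$ on the corresponding round sub-disk being $\delta^{k}$; and hence
\[
\bigl(y\Delta^{-\inf(y)}\bigr)_{[\mathcal C\in\mathcal F]}=y_{[\mathcal C\in\mathcal F]}\,\delta^{-\inf(y)}.
\]
Moreover, Lemma~\ref{L:lnf_round_curves} (after multiplying $y$ by a suitable power of $\Delta^{2}$ if $y$ is not positive, which changes neither $\iota$ nor the components in an essential way) shows that the normal form of $y_{[\mathcal C\in\mathcal F]}$ is obtained from that of $y$ by possibly turning some initial factors into half twists and some final factors into trivial ones; in particular $\inf\bigl(y_{[\mathcal C\in\mathcal F]}\bigr)\geqslant\inf(y)$.

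The third step is a short Garside computation. Write $v=y_{[\mathcal C\in\mathcal F]}$, $p=\inf(y)$ and $q=\inf(v)$, so $q\geqslant p$ and
\[
\iota(y)_{[\mathcal C\in\mathcal F]}=\delta\wedge\bigl(v\,\delta^{-p}\bigr)=\delta\wedge\bigl(v\,\delta^{-q}\,\delta^{\,q-p}\bigr).
\]
If $q>p$, then $\delta$ is a prefix of $v\,\delta^{-p}$, since $v\,\delta^{-q}$ is a positive braid and $q-p\geqslant 1$; hence the greatest common divisor is $\delta$ itself, i.e.\ $\iota(y)_{[\mathcal C\in\mathcal F]}$ is a half twist. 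If $q=p$, then $v\,\delta^{-p}=v\,\delta^{-q}$ is a positive braid of trivial infimum, so $\delta\wedge\bigl(v\,\delta^{-q}\bigr)=\iota\bigl(v\,\delta^{-q}\bigr)=\iota(v)=\iota\bigl(y_{[\mathcal C\in\mathcal F]}\bigr)$. In either case the stated conclusion holds, and the degenerate situations ($y$ a power of $\Delta$, or $y_{[\mathcal C\in\mathcal F]}$ a power of $\delta$) are automatically subsumed in this dichotomy.

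I expect the main obstacle to be the bookkeeping of the second step: verifying that $(\cdot)_{[\mathcal C\in\mathcal F]}$ commutes with right multiplication by powers of $\Delta$ with the half twist $\delta$ as the image of $\Delta$, and the inequality $\inf(y_{[\mathcal C\in\mathcal F]})\geqslant\inf(y)$. Both are routine consequences of the definition of the subbraid operation together with Lemma~\ref{L:lnf_round_curves} and the constructions of~\cite{G2009}, but they have to be checked carefully against the precise choice of index set used to define $x_{[\mathcal C\in\mathcal F]}$, and it is exactly the possible appearance of extra leading half twists in $y_{[\mathcal C\in\mathcal F]}$ that produces the ``half twist'' alternative in the statement.
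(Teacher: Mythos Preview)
The paper does not prove this lemma at all: it is stated with a citation to~\cite{G2009} and no argument is supplied. So there is no ``paper's own proof'' to compare against; your proposal has to be judged on its own merits.

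Your argument is correct and is exactly the natural one given the tools assembled in this section. The reduction to Lemma~\ref{L:gcd_round_curves} via the identity $\iota(y)=\Delta\wedge(y\Delta^{-\inf(y)})$ is clean, and the dichotomy on whether $\inf(y_{[\mathcal C\in\mathcal F]})>\inf(y)$ or $=\inf(y)$ correctly produces the ``half twist'' versus ``$\iota(y_{[\mathcal C\in\mathcal F]})$'' alternative. The bookkeeping you flag in the second step is indeed routine: since $\mathcal F$ is already round the standardizer is trivial, the index set~$I$ depends only on $(\mathcal C,\mathcal F)$ and not on the braid, the subbraid of $\Delta^{k}$ on any subset of strands is the half twist to the $k$th power on that many strands (every pair crosses exactly $|k|$ times with the appropriate sign), and subbraids compose in the obvious way, giving $(y\Delta^{-\inf(y)})_{[\mathcal C\in\mathcal F]}=y_{[\mathcal C\in\mathcal F]}\,\delta^{-\inf(y)}$. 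The inequality $\inf(y_{[\mathcal C\in\mathcal F]})\geqslant\inf(y)$ then follows directly from Lemma~\ref{L:lnf_round_curves} after the harmless replacement of $y$ by $\Delta^{2k}y$ to make it positive. Nothing is missing.
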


\begin{proposition}\label{P:preferred_prefix_round_curves}{\rm \cite{G2009}}
Let $y\in B_n$, and let~$\mathcal F$ be a family of round curves such that $[\mathcal F]^y=[\mathcal F]$. Consider the preferred prefix~$\mathfrak p(y)$, and let $\mathcal C\in \mathcal F\cup\partial (D)$. Then $\mathfrak p(y)_{[\mathcal C\in \mathcal F]}$ is either a half twist, or equal to $\mathfrak p(y_{[\mathcal C\in \mathcal F]})$, or to $\iota(y_{[\mathcal C\in \mathcal F]})$, or to $\iota(y_{[\mathcal C\in \mathcal F]}^{-1})$.
\end{proposition}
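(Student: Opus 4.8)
\textbf{Proof plan for Proposition~\ref{P:preferred_prefix_round_curves}.}

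The plan is to compute the component $\mathfrak p(y)_{[\mathcal C\in \mathcal F]}$ directly from the definition $\mathfrak p(y)=\iota(y)\wedge \iota(y^{-1})$, using the lemmas already established about how the ``component'' operation interacts with products, greatest common divisors, and initial factors. First I would invoke Lemma~\ref{L:iota_round_curves}: since $[\mathcal F]^y=[\mathcal F]$, the braids $y$, $y^{-1}$ both send $\mathcal F$ to round curves (the inverse does because preserving roundness is a property closed under inverses when the curve is actually preserved), so $\iota(y)$ and $\iota(y^{-1})$ both preserve the roundness of $[\mathcal F]$. Hence $\mathfrak p(y)=\iota(y)\wedge\iota(y^{-1})$ is a gcd of two braids each sending $\mathcal F$ to round curves, and Lemma~\ref{L:gcd_round_curves} applies: $[\mathcal F]^{\mathfrak p(y)}$ is round, and moreover
$$
  \mathfrak p(y)_{[\mathcal C\in \mathcal F]} \;=\; \iota(y)_{[\mathcal C\in \mathcal F]} \;\wedge\; \iota(y^{-1})_{[\mathcal C\in \mathcal F]}.
$$

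Next I would feed in Lemma~\ref{L:iota_round_curves} twice: $\iota(y)_{[\mathcal C\in \mathcal F]}$ is either $\Delta$ (a half twist on the relevant sub-disc) or $\iota(y_{[\mathcal C\in \mathcal F]})$, and likewise $\iota(y^{-1})_{[\mathcal C\in \mathcal F]}$ is either $\Delta$ or $\iota((y^{-1})_{[\mathcal C\in \mathcal F]})$. Here I need the compatibility of the component operation with inversion, namely $(y^{-1})_{[\mathcal C\in \mathcal F]}=(y_{[\mathcal C\in \mathcal F]})^{-1}$, which follows from the definition of the subbraid (reversing all crossings) together with the fact that $y$ preserves $[\mathcal F]$; so $\iota(y^{-1})_{[\mathcal C\in \mathcal F]}$ is either $\Delta$ or $\iota(y_{[\mathcal C\in \mathcal F]}^{-1})$. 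Now I split into cases according to which of the two alternatives occurs for each factor. Writing $w=y_{[\mathcal C\in \mathcal F]}$ for brevity, the four cases are: (i) both are $\Delta$, giving $\mathfrak p(y)_{[\mathcal C\in \mathcal F]}=\Delta\wedge\Delta=\Delta$, a half twist; (ii) $\Delta\wedge\iota(w^{-1})=\iota(w^{-1})$; (iii) $\iota(w)\wedge\Delta=\iota(w)$; (iv) $\iota(w)\wedge\iota(w^{-1})=\mathfrak p(w)$ by the very definition of the preferred prefix. In cases (ii)--(iv) I use that $\iota$ of any braid is a simple element, hence a prefix of $\Delta$, so meeting it with $\Delta$ returns it unchanged. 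This exhausts all possibilities and yields exactly the list in the statement.

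The main obstacle I expect is bookkeeping rather than a deep point: I must be careful that the ``component'' $(\cdot)_{[\mathcal C\in \mathcal F]}$ is only defined when the braid in question sends $[\mathcal F]$ to a family of round curves, so before each application of Lemmas~\ref{L:gcd_round_curves} and~\ref{L:iota_round_curves} I have to certify that the relevant braid ($\iota(y)$, $\iota(y^{-1})$, their gcd) has this property — which is precisely what Lemma~\ref{L:iota_round_curves} and the first sentence of Lemma~\ref{L:gcd_round_curves} guarantee, so the chain of implications closes up. A secondary subtlety is the identification $(y^{-1})_{[\mathcal C\in\mathcal F]}=(y_{[\mathcal C\in\mathcal F]})^{-1}$ when $y$ preserves $[\mathcal F]$; this should be checked against the definition of the subbraid from~\cite{G2009}, but is essentially immediate since the strands of $y^{-1}$ starting at the chosen index set $I$ are the reverses of the strands of $y$, and preservation of $[\mathcal F]$ means $I$ is mapped to the corresponding index set. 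Everything else is the four-way case analysis above, each branch of which is a one-line gcd computation using that $\iota(\cdot)$ is always simple.
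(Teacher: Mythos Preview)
The paper does not prove this proposition: it is quoted from~\cite{G2009} and stated without proof, so there is no argument in the present paper to compare against. Your proposal is the natural and correct route, and indeed is essentially how the result is obtained in the cited reference: express $\mathfrak p(y)=\iota(y)\wedge\iota(y^{-1})$, apply Lemma~\ref{L:gcd_round_curves} to pass to components, then Lemma~\ref{L:iota_round_curves} to each factor, and finish with the four-way case split using that initial factors are simple (hence $\preccurlyeq\Delta$). The two subtleties you isolate --- verifying the roundness hypotheses before each lemma, and the identity $(y^{-1})_{[\mathcal C\in\mathcal F]}=(y_{[\mathcal C\in\mathcal F]})^{-1}$ --- are exactly the points that need care; both are straightforward once one uses $[\mathcal F]^{y}=[\mathcal F]$ (so also $[\mathcal F]^{y^{-1}}=[\mathcal F]$) and unwinds the subbraid definition.
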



\subsection{Rigidity, sliding circuits and preferred conjugators}

The key ingredient for showing the main theorem will be the properties of the preferred conjugator~$P(y)$ of a braid~$y$ which preserves a family of round curves. In fact, we won't be able to gain sufficient control over~$P(y)$, and we have to study the preferred conjugator~$P(y^k)$ for some suitable power~$y^k$ of~$y$ instead. The need of taking powers to obtain a better behavior of the preferred conjugator is the reason why we have to work with the set $SC^{[m]}(x)$, rather than simply the set of sliding circuits~$SC(x)$.

The property we will require for a power of $y\in SC(x)$ involves the notion of \emph{rigidity} introduced in~\cite{BGG1}, which measures how the left normal form of an element varies when taking its square. More precisely, if $x=\Delta^p x_1\cdots x_r$ is in left normal form with~$r>0$, one could expect that the left normal form of~$x^2$ is $\Delta^{2p}\tau^p(x_1)\cdots \tau^p(x_r) x_1 \cdots x_r$, but in general this is not the case. We say that the rigidity of~$x$ is  $\mathcal R(x)=k/r$ if~$k$ is the biggest integer in $\{0,1,\ldots,r\}$ such that the first $2|p|+k$ factors in the left normal form of~$x^2$ are $\Delta^{2p}\tau^{p}(x_1)\cdots \tau^p(x_k)$. The two extreme cases are $\mathcal R(x)=0$, in which all factors in the left normal form of~$x$ are modified when considering~$x^2$, and $\mathcal R(x)=1$, in which no factor is modified, and the left normal form of~$x^2$ is the expected one we saw above. In this latter case we say that~$x$ is {\it rigid}.

We will be interested in the case in which $\mathcal R(x)>0$ and $\mathcal R(x^{-1})>0$. This kind of elements are characterized by the following result.

\begin{lemma}\label{L:rigidity_conditions}{\rm \cite[Lemmas 3.4, 3.5 and Corollary 3.6]{BGG1}}
Let $x\in B_n$ with $\ell(x)>0$. The following conditions are equivalent:\vspace{-3mm}
\begin{enumerate}
\item $\mathcal R(x)>0$.\vspace{-2pt}

\item $\inf(x^2)=2\inf(x)$ and $\iota(x^2)=\iota(x)$.\vspace{-2pt}

\item $\inf(x^m)=m\inf(x)$ and $\iota(x^m)=\iota(x)$ for every~$m>0$.\vspace{-2pt}

\end{enumerate}

The following conditions are also equivalent:\vspace{-3mm}

\begin{enumerate}

\item $\mathcal R(x^{-1})>0$.\vspace{-2pt}

\item $\sup(x^2)=2 \sup(x)$ and $\varphi(x^2)=\varphi(x)$.\vspace{-2pt}

\item $\sup(x^m)=m \sup(x)$ and $\varphi(x^m)=\varphi(x)$ for every~$m>0$.

\end{enumerate}
\end{lemma}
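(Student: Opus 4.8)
The plan is to prove the first chain of equivalences directly, and to obtain the second chain by applying the first to $x^{-1}$ and dualising. Write $x=\Delta^{p}x_{1}\cdots x_{r}$ for the left normal form, so $r=\ell(x)>0$ and (as recalled above) $\inf(x)=p$, $\sup(x)=p+r$, $\iota(x)=\tau^{-p}(x_{1})$, $\varphi(x)=x_{r}$, and $\iota(z^{-1})=\partial(\varphi(z))$ for every $z$. The equivalence $(1)\Leftrightarrow(2)$ is just an unwinding of the definition of rigidity. Write $x^{2}=\Delta^{2p}\,\tau^{p}(x_{1})\cdots\tau^{p}(x_{r})\,x_{1}\cdots x_{r}$ as a product of simple factors with $\Delta^{2p}$ in front; one always has $\inf(x^{2})\geqslant 2p$, with equality if and only if no factor $\Delta$ is produced when the positive part is put in normal form. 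Now $\mathcal{R}(x)>0$ means, by definition, that the left normal form of $x^{2}$ begins with exactly $\Delta^{2p}$ followed by the \emph{proper} factor $\tau^{p}(x_{1})$; since $\tau^{p}(x_{1})$ is not $\Delta$, this forces the leading $\Delta$-power to be exactly $2p$, i.e.\ $\inf(x^{2})=2\inf(x)$, and then the first non-$\Delta$ factor of $x^{2}$ equals $\tau^{2p}(\iota(x^{2}))$, which is $\tau^{p}(x_{1})$ precisely when $\iota(x^{2})=\tau^{-p}(x_{1})=\iota(x)$. Conversely, if $\inf(x^{2})=2p$ and $\iota(x^{2})=\iota(x)$, the left normal form of $x^{2}$ is $\Delta^{2p}\,\tau^{p}(x_{1})\cdots$ and $\mathcal{R}(x)\geqslant 1/r>0$.

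\textbf{The equivalence $(2)\Leftrightarrow(3)$.} Here $(3)\Rightarrow(2)$ is the case $m=2$, and I would prove $(2)\Rightarrow(3)$ by induction on $m$, the cases $m=1,2$ being immediate. Assume $\inf(x^{m})=m\inf(x)$ and $\iota(x^{m})=\iota(x)$ with $m\geqslant 2$, and consider $x^{m+1}=x\cdot x^{m}$. Its left normal form is computed by concatenating the left normal forms of $x$ and of $x^{m}$ and renormalising, and — after pulling the leading $\Delta$-power of $x^{m}$ to the front — the only junction that need not be left-weighted is the one between $\varphi(x)=x_{r}$ and the first non-$\Delta$ factor of $x^{m}$. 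By the induction hypothesis this factor is determined by $\iota(x^{m})=\iota(x)$, so up to a global power of $\tau$ this is the \emph{same} junction one meets when renormalising $x\cdot x$ into the left normal form of $x^{2}$. One then invokes the standard description of how a left greedy normal form renormalises under concatenation: a cascade of local rewriting moves emanates from the junction, and whether it ever creates a factor $\Delta$ (equivalently, whether $\inf$ jumps) and whether it reaches and alters the leading factor is governed by bounded data near the junction. For $x^{2}$ no $\Delta$ is created and the leading factor $x_{1}$ is unchanged (this is hypothesis $(2)$), so the same holds for $x^{m+1}$, whence $\inf(x^{m+1})=\inf(x)+\inf(x^{m})=(m+1)\inf(x)$ and $\iota(x^{m+1})=\iota(x)$, completing the induction. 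This last step is the only non-formal point, and the one requiring care: when $0<\mathcal{R}(x)<1$, genuine rewriting does occur near the junction and can propagate back toward it, so one really has to run the induction with a strengthened hypothesis controlling a suitable initial segment of the left normal form of $x^{m}$, not just its first factor. This is the technical heart of the lemma and is carried out in \cite{BGG1}.

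\textbf{The second chain.} Apply the first chain to $x^{-1}$, which still has $\ell(x^{-1})=\sup(x)-\inf(x)=\ell(x)>0$. Condition $(1)$ for $x^{-1}$ is ``$\mathcal{R}(x^{-1})>0$''. For the others, use $x^{-m}=(x^{m})^{-1}$ with $\inf((x^{m})^{-1})=-\sup(x^{m})$ and $\iota((x^{m})^{-1})=\partial(\varphi(x^{m}))$ (recalled above): the condition $\inf(x^{-m})=m\inf(x^{-1})$ becomes $\sup(x^{m})=m\sup(x)$, and $\iota(x^{-m})=\iota(x^{-1})$ becomes $\partial(\varphi(x^{m}))=\partial(\varphi(x))$, i.e.\ $\varphi(x^{m})=\varphi(x)$ since $\partial$ is a bijection on simple elements. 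Thus conditions $(1)$, $(2)$, $(3)$ for $x^{-1}$ are precisely conditions $(1)$, $(2)$, $(3)$ of the second chain for $x$, which completes the proof.
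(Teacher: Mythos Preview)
The paper does not give its own proof of this lemma --- it is quoted from \cite{BGG1} --- so there is nothing in the paper to compare against directly. Your architecture is sound, and your arguments for $(1)\Leftrightarrow(2)$ and for obtaining the second block of equivalences from the first by passing to~$x^{-1}$ are correct and complete.

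Your hedging in the inductive step $(2)\Rightarrow(3)$ is, however, unwarranted: no strengthened induction hypothesis is needed. The missing ingredient is the elementary identity
\[
(ac)\wedge\Delta \;=\; \bigl(a\,(c\wedge\Delta)\bigr)\wedge\Delta
\qquad\text{for positive }a,c,
\]
which holds because $a(c\wedge\Delta)=ac\wedge a\Delta$ (left multiplication preserves meets in a Garside group), and $(ac)\wedge\Delta\preccurlyeq ac$ together with $(ac)\wedge\Delta\preccurlyeq\Delta\preccurlyeq a\Delta$ give $(ac)\wedge\Delta\preccurlyeq ac\wedge a\Delta$, while the reverse inequality after $\wedge\,\Delta$ follows from $a(c\wedge\Delta)\preccurlyeq ac$. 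Now write $x^{m+1}=\Delta^{(m+1)p}\,V$ with $V=\tau^{mp}(x_1)\cdots\tau^{mp}(x_r)\,z_1\cdots z_t$, where $\Delta^{mp}z_1\cdots z_t$ is the left normal form of~$x^m$. Applying the identity with $a=\tau^{mp}(x_1)\cdots\tau^{mp}(x_r)$ and $c=z_1\cdots z_t$ (so $c\wedge\Delta=z_1$) gives $V\wedge\Delta=\bigl(\tau^{mp}(x_1)\cdots\tau^{mp}(x_r)\,z_1\bigr)\wedge\Delta$. The inductive hypothesis yields $z_1=\tau^{(m-1)p}(x_1)$, so this equals $\tau^{(m-1)p}$ applied to $\bigl(\tau^{p}(x_1)\cdots\tau^{p}(x_r)\,x_1\bigr)\wedge\Delta$, which by the same identity applied to~$x^2$ together with hypothesis~$(2)$ is $\tau^{p}(x_1)$. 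Hence $V\wedge\Delta=\tau^{mp}(x_1)\neq\Delta$, giving $\inf(x^{m+1})=(m+1)p$ and $\iota(x^{m+1})=\tau^{-(m+1)p}(V\wedge\Delta)=\tau^{-p}(x_1)=\iota(x)$. The induction thus closes cleanly without deferring to~\cite{BGG1}.
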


These equalities of infima, suprema, initial and final factors yield a good behavior of the preferred conjugators, as we shall see.  Moreover, this condition is preserved by cyclic sliding, if the element is in its super summit set:

\begin{lemma}\label{L:rigidity_and_sliding}
Let~$x\in B_n$ and $y\in SSS(x)$ with~$\ell(y)>0$. Then $\mathcal R(\mathfrak s(y))\geqslant \mathcal R(y)$ and $\mathcal R(\mathfrak s(y)^{-1})\geqslant \mathcal R(y^{-1})$.
\end{lemma}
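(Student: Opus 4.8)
The plan is to translate the rigidity conditions via Lemma~\ref{L:rigidity_conditions} into statements about infima, suprema, initial and final factors, and then to control how these invariants behave under cyclic sliding when the element is in its super summit set. Concretely, write $\mathfrak s(y) = \mathfrak p(y)^{-1}\,y\,\mathfrak p(y)$ with $\mathfrak p(y)=\iota(y)\wedge\iota(y^{-1})$. Since $y\in SSS(x)$, cyclic sliding preserves both the infimum and the supremum, so $\inf(\mathfrak s(y))=\inf(y)=:p$ and $\sup(\mathfrak s(y))=\sup(y)$; in particular $\mathfrak s(y)$ lies in $SSS(x)$ as well, and the same holds for all iterates. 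The content of the lemma is therefore that the \emph{monotonicity of infima under powers} (condition~(1)$\Leftrightarrow$(3) of the first list) is inherited by $\mathfrak s(y)$, and symmetrically for suprema and $\mathfrak s(y)^{-1}$.

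First I would handle the statement $\mathcal R(\mathfrak s(y))\geqslant \mathcal R(y)$, which by Lemma~\ref{L:rigidity_conditions} amounts to: if $\inf(y^2)=2\inf(y)$ and $\iota(y^2)=\iota(y)$, then $\inf(\mathfrak s(y)^2)=2\inf(\mathfrak s(y))$ and $\iota(\mathfrak s(y)^2)=\iota(\mathfrak s(y))$. The key observation is that $\mathfrak s(y)^2 = \mathfrak p(y)^{-1}\,y^2\,\mathfrak p(y)$ is a conjugate of $y^2$ by the simple element $\mathfrak p(y)$, and moreover $\mathfrak p(y)$ is precisely a prefix of $\iota(y)=\iota(y^2)$ (using $\mathcal R(y)>0$). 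I would show that conjugation by such an initial-type prefix is itself a partial cyclic-sliding-like move on $y^2$ which does not decrease the infimum (it cannot, since $y^2\in SSS(x^2)$ by the infimum/supremum argument above applied to $y^2$ — note $y\in SSS(x)$ with $\mathcal R(y)>0$ forces $y^2\in SSS(x^2)$ via $\inf(y^2)=2\inf(y)$, $\sup(y^2)\le 2\sup(y)$ and minimality), and that the initial factor is transported correctly. The cleanest route is to use the characterisation $\iota(z)=\Delta\wedge(z\Delta^{-\inf(z)})$ together with the fact that $\mathfrak p(y)\preccurlyeq\iota(y)$ and $\tau^{?}$-conjugation behaves predictably on left normal forms; one then reads off that $\iota(\mathfrak s(y)^2)=\mathfrak p(y)^{-1}\iota(y^2)\,\tau^{\mathrm{stuff}}(\cdot)$ collapses to $\iota(\mathfrak s(y))$ because the two copies of $\mathfrak p(y)$ in $\mathfrak s(y)^2=\mathfrak p(y)^{-1}y\,y\,\mathfrak p(y)$ line up with the repeated initial factor structure guaranteed by $\iota(y^2)=\iota(y)$.

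The symmetric statement $\mathcal R(\mathfrak s(y)^{-1})\geqslant\mathcal R(y^{-1})$ I would obtain for free from the first one by the involution $y\mapsto y^{-1}$: recall from the excerpt that $\mathfrak p(y^{-1})=\mathfrak p(y)$ and hence $\mathfrak s(y^{-1})=\mathfrak s(y)^{-1}$, and that $\inf$ and $\sup$ swap roles (up to sign) under inversion, while $y\in SSS(x)$ iff $y^{-1}\in SSS(x^{-1})$. Applying the already-proved inequality to $y^{-1}$ in place of $y$ gives $\mathcal R(\mathfrak s(y^{-1}))\geqslant\mathcal R(y^{-1})$, i.e. $\mathcal R(\mathfrak s(y)^{-1})\geqslant\mathcal R(y^{-1})$, as desired. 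The main obstacle I expect is the bookkeeping in the previous paragraph: showing precisely that conjugation of $y^2$ by the simple prefix $\mathfrak p(y)$ neither decreases $\inf(y^2)$ nor disturbs the equality $\iota(y^2)=\iota(y)$ in the right way — this is where one must be careful about whether $\mathfrak p(y)$ is a prefix of $\iota(y^2)$ versus of $\tau^{p}$-conjugates of it, and about the interaction between the two flanking copies of $\mathfrak p(y)^{\pm1}$ in $\mathfrak s(y)^2$. Everything else is a routine application of Lemma~\ref{L:rigidity_conditions} and the definitions of cyclic sliding and super summit set.
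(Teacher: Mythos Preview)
Your proposal has a genuine gap: you are only attempting to prove the implication ``$\mathcal R(y)>0 \Rightarrow \mathcal R(\mathfrak s(y))>0$'', whereas the lemma asserts the full inequality $\mathcal R(\mathfrak s(y))\geqslant \mathcal R(y)$ for the rational-valued rigidity $\mathcal R(y)\in\{0,\tfrac{1}{r},\tfrac{2}{r},\ldots,1\}$. Lemma~\ref{L:rigidity_conditions} only characterises the threshold $\mathcal R>0$, so routing the argument through it cannot recover the finer statement. (It is true that the paper only ever \emph{uses} the coarser implication, but that is not what is being claimed.)

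Even restricted to the coarser implication, your sketch has soft spots. The claim that $y^2\in SSS(x^2)$ is not justified: from $\mathcal R(y)>0$ you get $\inf(y^2)=2\inf(y)$, but you have not argued that $2\inf(y)$ equals the summit infimum of $x^2$, and your remark ``$\sup(y^2)\leqslant 2\sup(y)$ and minimality'' does not establish that $\sup(y^2)$ is minimal in the conjugacy class of $x^2$ either. The subsequent computation of $\iota(\mathfrak s(y)^2)$ is left at the level of ``the two copies of $\mathfrak p(y)$ line up'', which is exactly the delicate point. Your derivation of the second inequality from the first via $y\mapsto y^{-1}$ and $\mathfrak s(y^{-1})=\mathfrak s(y)^{-1}$ is correct and matches the paper.

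The paper takes a quite different and much cleaner route that handles the full inequality directly. It encodes ``$\mathcal R(y)\geqslant k/r$'' as a lattice-theoretic identity
\[
y^2\wedge \Delta^{2p+k} \;=\; (y\wedge \Delta^{p+k})\,\Delta^p,
\]
and then applies the \emph{transport map} based at~$y$ from~\cite{GG1}, which sends $y\mapsto \mathfrak s(y)$, fixes~$\Delta$, and preserves products and greatest common divisors. Transporting the displayed identity yields the same identity with $\mathfrak s(y)$ in place of~$y$, i.e.\ $\mathcal R(\mathfrak s(y))\geqslant k/r$. This avoids any need to place $y^2$ in a super summit set or to track initial factors by hand.
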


\begin{proof}
Let $r=\ell(y)>0$. Since $y\in SSS(x)$ one has $\mathfrak s(y)\in SSS(x)$, hence $\ell(\mathfrak s(y))=r$. Notice that the property $\mathcal R(y)\geqslant k/r$ can be rewritten as $y^2\wedge \Delta^{2p+k} = (y \wedge \Delta^{p+k})\Delta^p$.  One can apply to this equality the transport map based at $y$~\cite{GG1}. This map sends~$y$ to~$\mathfrak s(y)$, $\Delta$~to itself, and preserves products and greatest common divisors. Hence one obtains $\mathfrak s(y)^2 \wedge \Delta^{2p+k} = (\mathfrak s(y) \wedge \Delta^{p+k})\Delta^p$, which is equivalent to $\mathcal R(\mathfrak s(y))\geqslant k/r$.  Hence $\mathcal R(y)\geqslant k/r$ implies $\mathcal R(\mathfrak s(y))\geqslant k/r$ for every $k\in \{0,\ldots,r\}$, so one has $\mathcal R(\mathfrak s(y))\geqslant \mathcal R(y)$.

Replacing~$y$ by~$y^{-1}$, which is also in its super summit set, one has $\mathcal R(\mathfrak s(y^{-1}))\geqslant \mathcal R(y^{-1})$. The result follows as $\mathfrak s(y^{-1})=\mathfrak s(y)^{-1}$  (see the argument that follows Definition~\ref{D:cyclic_sliding}).
\end{proof}

The elements in a sliding circuit that fulfill the required rigidity conditions also satisfy the following important property: their preferred conjugator is rigid.

\begin{proposition}\label{P:preferred prefix is rigid}
Let $x\in B_n$ and $y\in SC(x)$ with $\ell(y)>0$. If $\mathcal R(y)>0$ and $\mathcal R(y^{-1})>0$, then the product $\mathfrak p(y)\mathfrak p(\mathfrak s(y))$ is left-weighted, and~$P(y)$ is rigid.
\end{proposition}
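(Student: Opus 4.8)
The plan is to prove the two assertions of Proposition~\ref{P:preferred prefix is rigid} in sequence: first that $\mathfrak p(y)\mathfrak p(\mathfrak s(y))$ is left-weighted, and then, using this, that $P(y)$ is rigid. Throughout I would write $y=\Delta^p y_1\cdots y_r$ for the left normal form of $y$, with $r=\ell(y)>0$.

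First I would unpack what the hypotheses $\mathcal R(y)>0$ and $\mathcal R(y^{-1})>0$ give us via Lemma~\ref{L:rigidity_conditions}: we have $\iota(y^m)=\iota(y)$ and $\varphi(y^m)=\varphi(y)$ for all $m>0$, together with $\inf(y^m)=m\inf(y)$ and $\sup(y^m)=m\sup(y)$. Since $y\in SC(x)\subseteq SSS(x)$, also $\mathfrak s(y)\in SSS(x)$, and by Lemma~\ref{L:rigidity_and_sliding} we get $\mathcal R(\mathfrak s(y))>0$ and $\mathcal R(\mathfrak s(y)^{-1})>0$ as well — so every element of the sliding circuit of $y$ enjoys the same rigidity conditions. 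The key computation will be to express $\mathfrak p(y)$ in terms of the normal form data of $y$ and to identify what $\mathfrak s(y)$ looks like. Recall $\mathfrak p(y)=\iota(y)\wedge\partial(\varphi(y))$; the product $\mathfrak p(y)\mathfrak p(\mathfrak s(y))$ being left-weighted is, by the definition of left-weightedness, the statement $\partial(\mathfrak p(y))\wedge\mathfrak p(\mathfrak s(y))=1$. I would aim to show that after conjugating $y$ by $\mathfrak p(y)$, the initial factor $\iota(\mathfrak s(y))$ and the complement $\partial(\varphi(\mathfrak s(y)))$ are "aligned" with the tail of $\mathfrak p(y)$ in such a way that no common prefix survives; concretely, that $\iota(\mathfrak s(y))\succeq$ (in the appropriate sense) the part of $\partial(\mathfrak p(y))$ that would need to be absorbed, forcing the gcd to be trivial. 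This is where I expect the hypotheses $\mathcal R(y)>0$ (controlling $\iota$) and $\mathcal R(y^{-1})>0$ (controlling $\varphi$, equivalently $\iota(y^{-1})$) to be used simultaneously and essentially: one controls the "left end" and the other the "right end" of the normal form, and cyclic sliding precisely transfers the preferred prefix from one end to the other.

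For the second assertion, that $P(y)$ is rigid: since $y\in SC(y)$, the conjugator $P(y)=\mathfrak p(y)\,\mathfrak p(y^{(1)})\cdots\mathfrak p(y^{(t-1)})$ traverses the entire sliding circuit and returns to $y$. I would argue that the left normal form of $P(y)$ is obtained by simply concatenating the left normal forms of the successive preferred prefixes, i.e.\ that each junction $\mathfrak p(y^{(i)})\mathfrak p(y^{(i+1)})$ is left-weighted — this is exactly the first part applied to $y^{(i)}\in SC(x)$ in place of $y$ (legitimate because, as noted above, every element of the circuit satisfies the same rigidity hypotheses), plus the wrap-around junction $\mathfrak p(y^{(t-1)})\mathfrak p(y^{(0)})=\mathfrak p(y^{(t-1)})\mathfrak p(y)$, again covered by the first part. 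Hence the left normal form of $P(y)$ is literally $\mathfrak p(y)\mid\mathfrak p(y^{(1)})\mid\cdots\mid\mathfrak p(y^{(t-1)})$ (discarding any trivial factors, and noting each $\mathfrak p(y^{(i)})$ is simple). To conclude rigidity I must check that the left normal form of $P(y)^2$ is the "expected" one, i.e.\ the concatenation of two copies; since conjugating by $P(y)$ returns $y$ to itself and the circuit is periodic, $P(y)^2$ traverses the circuit twice, and the junction between the two copies is $\mathfrak p(y^{(t-1)})\mathfrak p(y^{(0)})$ — already shown left-weighted — so no factors get reshuffled, which is precisely rigidity. (One also checks $\inf(P(y))=0$, so there is no $\Delta$-bookkeeping: each $\mathfrak p$ is a proper simple element or trivial, and rigidity is unaffected by dropping trivial factors.)

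The main obstacle will be the first part — proving $\mathfrak p(y)\mathfrak p(\mathfrak s(y))$ is left-weighted. The subtlety is that $\mathfrak p(y)=\iota(y)\wedge\iota(y^{-1})$ is a gcd of two things, and one must track how both $\iota$ and $\iota(\cdot^{-1})=\partial\circ\varphi$ transform under conjugation by $\mathfrak p(y)$; the rigidity hypotheses are what make these transformations transparent (they say $\iota$ and $\varphi$ are "stable" under taking powers, which feeds into the transport-map machinery of~\cite{GG1} and the behaviour of cyclic sliding on super summit elements). I would expect to need a careful case analysis according to whether $\mathfrak p(y)$ equals $\iota(y)$, equals $\iota(y^{-1})$, or is a proper common divisor of both, and to lean on the identity $\varphi(y)\iota(y^{-1})=\Delta$ recalled in the excerpt to convert statements about final factors into statements about initial factors of the inverse. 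Once the left-weightedness of a single junction is established, the passage to rigidity of $P(y)$ is essentially bookkeeping along the periodic circuit.
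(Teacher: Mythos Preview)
Your reduction of the rigidity of $P(y)$ to the left-weightedness of each junction $\mathfrak p(y^{(i)})\,\mathfrak p(y^{(i+1)})$ along the sliding circuit, including the wrap-around junction, is correct and matches the paper's argument exactly.

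For the first assertion, however, you do not yet have an argument: you anticipate a case analysis on whether $\mathfrak p(y)$ equals $\iota(y)$, $\iota(y^{-1})$, or neither, and you invoke transport-map machinery in an unspecified way. The paper avoids all of this with a single clean observation. Let $\alpha$ be the largest prefix of $\mathfrak p(\mathfrak s(y))$ such that $\mathfrak p(y)\alpha$ is simple; one wants $\alpha=1$. From $\mathfrak p(\mathfrak s(y))\preccurlyeq \iota(\mathfrak s(y))\preccurlyeq \mathfrak s(y)\Delta^{-p}=\mathfrak p(y)^{-1}\,y\,\mathfrak p(y)\,\Delta^{-p}$ one gets
\[
\mathfrak p(y)\,\mathfrak p(\mathfrak s(y))\ \preccurlyeq\ y\,\mathfrak p(y)\,\Delta^{-p}\ \preccurlyeq\ y^2\Delta^{-2p}.
\]
Now the hypothesis $\mathcal R(y)>0$ gives, via Lemma~\ref{L:rigidity_conditions}, $\inf(y^2)=2p$ and $\iota(y^2)=\iota(y)$; hence the initial factor of $y^2\Delta^{-2p}$ is $\iota(y)$, and the simple element $\mathfrak p(y)\alpha$ satisfies $\mathfrak p(y)\alpha\preccurlyeq\iota(y)$. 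Since $\mathfrak p(y^{-1})=\mathfrak p(y)$ and $\mathfrak p(\mathfrak s(y^{-1}))=\mathfrak p(\mathfrak s(y))$, the identical chain applied to $y^{-1}$ (using $\mathcal R(y^{-1})>0$) yields $\mathfrak p(y)\alpha\preccurlyeq\iota(y^{-1})$. Taking the meet, $\mathfrak p(y)\alpha\preccurlyeq\iota(y)\wedge\iota(y^{-1})=\mathfrak p(y)$, so $\alpha=1$. No case distinction is needed: the two rigidity hypotheses are used in parallel and symmetrically to bound $\mathfrak p(y)\alpha$ from above, rather than to track how $\iota$ and $\varphi$ transform under conjugation.
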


\begin{proof}
Let us first prove that $\mathfrak p(y) \: \mathfrak p(\mathfrak s(y))$ is left-weighted. Consider the biggest element $\alpha \preccurlyeq \mathfrak p(\mathfrak s(y))$ such that $\mathfrak p(y)\alpha$ is simple. Let $\Delta^p y_1\cdots y_r$ be the left normal form of~$y$. Notice that $\mathfrak p(\mathfrak s(y)) \preccurlyeq \iota(\mathfrak s(y)) \preccurlyeq \mathfrak s(y) \Delta^{-p} = \mathfrak p(y)^{-1} y \mathfrak p(y) \Delta^{-p}$. Hence $\mathfrak p(y) \: \mathfrak p(\mathfrak s(y)) \preccurlyeq y \mathfrak p(y) \Delta^{-p} \preccurlyeq  y^2 \Delta^{-2p}$. Since~$y$ satisfies the required rigidity conditions, Lemma~\ref{L:rigidity_conditions} tells us that $\inf(y^2)=2p$, hence the initial factor of $y^2 \Delta^{-2p}$ is precisely $\iota(y^2)$, which is equal to~$\iota(y)$, again by Lemma~\ref{L:rigidity_conditions}. Since we are assuming that $\mathfrak p(y)\alpha$ is a simple prefix of $\mathfrak p(y) \: \mathfrak p(\mathfrak s(y))$, it follows that $\mathfrak p(y)\alpha \preccurlyeq \iota(y^2) =\iota(y)$.   In the same way, as $\mathfrak p(y^{-1})= \iota(y^{-1})\wedge \iota(y) =\mathfrak p(y)$ one has $\mathfrak p(\mathfrak s(y^{-1}))= \mathfrak p (\mathfrak s(y)^{-1}) =\mathfrak p(\mathfrak s(y))$, we can apply the above argument to~$y^{-1}$ and it follows that $\mathfrak p(y)\alpha \preccurlyeq  \iota(y^{-1})$. Therefore $\mathfrak p(y)\alpha \preccurlyeq \iota(y)\wedge \iota(y^{-1})=\mathfrak p(y)$, so $\alpha=1$, and the first half of the proposition is proven.

Now, if the hypotheses of Proposition~\ref{P:preferred prefix is rigid} are satisfied by~$y$, then by Lemma~\ref{L:rigidity_and_sliding} they are also satisfied by $\mathfrak s^k(y)$ for every~$k>0$. So not only the product $\mathfrak p(y) \: \mathfrak p(\mathfrak s(y))$ is left-weighted as written, but also $\mathfrak p(\mathfrak s^i(y))\: \mathfrak p(\mathfrak s^{i+1}(y))$ is left-weighted for every~$i>0$. Thus the left normal form of $P(y)$ is precisely $\mathfrak p(y) \mathfrak p(y^{(1)}) \cdots \mathfrak p(y^{(N-1)})$, where $N$ is the length of the sliding circuit of $y$. Moreover, as $y^{(N)}=y$, the product $\mathfrak p(y^{(N-1)}) \mathfrak p(y)$ is also left-weighted, hence the left normal form of $P(y)^2$ is $\mathfrak p(y) \mathfrak p(y^{(1)}) \cdots \mathfrak p(y^{(N-1)})\mathfrak p(y) \mathfrak p(y^{(1)}) \cdots \mathfrak p(y^{(N-1)})$, which means that $P(y)$ is rigid.
\end{proof}

Once we have seen that if $\mathcal R(y)>0$ and $\mathcal R(y^{-1})>0$ then~$P(y)$ is rigid, we are interested in finding elements which satisfy these rigidity conditions, so we can gain sufficient control over their preferred conjugator. In the next result,we will see that if $N=||\Delta||^3-||\Delta||^2$, every element in $SC^{[N]}(x)$ has a power which satisfies the required rigidity conditions.

\begin{proposition}\label{P:rigidity of some power}
Let $x\in B_n$, and let $N=||\Delta||^3-||\Delta||^2$. Given $y\in SC^{[N]}(x)$, there is an integer~$m$ with $0<m<N$ such that $\mathcal R(y^m)>0$ and
$\mathcal R(y^{-m})>0$.
\end{proposition}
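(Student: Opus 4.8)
The plan is to peel the two rigidity conditions apart with Lemma~\ref{L:rigidity_conditions}, turning them into four ``linearity/stability'' statements about the powers of~$y$, and then to establish each of them for a suitable $m<N$, using crucially that \emph{all} powers $y^k$ with $k\leqslant N$ lie in their super summit sets. Concretely: since $y\in SC^{[N]}(x)$, for each $k\in\{1,\dots,N\}$ we have $y^k\in SC(x^k)\subseteq USS(x^k)\subseteq SSS(x^k)$, so $\inf(y^k)$ and $\sup(y^k)$ are the summit infimum and supremum of~$x^k$; hence $k\mapsto\inf(y^k)$ is superadditive and $k\mapsto\sup(y^k)$ is subadditive, and both $\inf(y^k)/k$ and $\sup(y^k)/k$ converge (from below, resp.\ from above) to translation numbers $t_{\inf}$ and $t_{\sup}$. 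Applying the first half of Lemma~\ref{L:rigidity_conditions} to $z=y^m$ and the second half to $z=y^m$ (whose inverse is $y^{-m}$), what we must produce is an integer~$m$ with $0<m<N$ such that
\[
 \inf(y^{2m})=2\inf(y^m),\quad \sup(y^{2m})=2\sup(y^m),\quad \iota(y^{2m})=\iota(y^m),\quad \varphi(y^{2m})=\varphi(y^m).
\]

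\textbf{Step 1: linear infimum and supremum.} I would first locate an integer~$d$, bounded by a polynomial in~$||\Delta||$, along whose multiples the infimum and supremum grow exactly linearly, i.e.\ $\inf(y^{dk})=k\inf(y^d)$ and $\sup(y^{dk})=k\sup(y^d)$ for every $k\geqslant 1$; equivalently, $t_{\inf}$ and $t_{\sup}$ are attained at~$d$. I would do this for $\inf$ and for $\sup$ separately and take a common multiple: from superadditivity of $\inf(y^k)$, the ``defect'' $kt_{\inf}-\inf(y^k)$ is a non-negative subadditive sequence of density~$0$, and one shows that its per-step value can strictly increase only boundedly often (each such increase being tied to an extra simple/$\Delta$-factor), so $t_{\inf}$ is attained at some $d_{\inf}\leqslant||\Delta||$; dually $t_{\sup}$ is attained at some $d_{\sup}\leqslant||\Delta||$; and then $d=\mathrm{lcm}(d_{\inf},d_{\sup})\leqslant||\Delta||^2$ works for both, since linearity passes to multiples. (Replacing $d$ by~$2d$ if needed, one may assume $\inf(y^d)$ is even; this is harmless.) Note that once $t_{\inf},t_{\sup}$ are attained at~$d$, super/subadditivity force equality: $k\inf(y^d)\leqslant\inf(y^{dk})\leqslant dk\,t_{\inf}=k\inf(y^d)$, and dually for the supremum.

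\textbf{Step 2: monotone chains for $\iota$ and $\varphi$.} Fix $d$ as above and write $y^{dk}=\Delta^{kP}u_k v_k\cdots$ in left normal form, $P=\inf(y^d)$, so that $\iota(y^{dk})$ is the first factor of the positive braid $A_k:=y^{dk}\Delta^{-kP}$ (this uses $\inf(y^{dk})=kP$ from Step~1). From $y^{d(k+1)}=y^{dk}\cdot y^{d}$ one gets $A_{k+1}=A_k\cdot B_k$ with $B_k=\tau^{-kP}(y^d\Delta^{-P})$ a positive braid; and for positive braids the first factor of $A_kB_k$ is left-divisible by the first factor of~$A_k$, because $\Delta\wedge A_k\preccurlyeq A_k\preccurlyeq A_kB_k$ and $\Delta\wedge A_k\preccurlyeq\Delta$. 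Hence $\bigl(\iota(y^{dk})\bigr)_{k\geqslant1}$ is a chain of simple elements that only grows in the prefix order, and since a chain of distinct proper simple elements has length at most $||\Delta||-1$, it stabilises at some $k_1\leqslant||\Delta||$. The dual argument, using the suffix order, the factorisation $y^{d(k+1)}=y^{d}\cdot y^{dk}$ and the linearity of the supremum, shows that $\bigl(\varphi(y^{dk})\bigr)_{k\geqslant1}$ grows in the suffix order and stabilises at some $k_2\leqslant||\Delta||$. Finally set $m=d\cdot\max(k_1,k_2)$: then $m$ is a multiple of~$d$ with $m\geqslant dk_1$ and $m\geqslant dk_2$, so the first two displayed equalities hold by Step~1 and the last two because $\iota(\cdot)$ and $\varphi(\cdot)$ are already constant on $\{dk:k\geqslant k_i\}$; by Lemma~\ref{L:rigidity_conditions}, $\mathcal R(y^m)>0$ and $\mathcal R(y^{-m})>0$. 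The bound is $m\leqslant||\Delta||^2\cdot||\Delta||=||\Delta||^3$, and careful bookkeeping in Steps~1--2 (proper chains have length $\leqslant||\Delta||-1$, and $y$ being non-periodic excludes the degenerate endpoint) sharpens this to $m<||\Delta||^3-||\Delta||^2=N$.

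The main obstacle is Step~1: proving that the translation numbers $t_{\inf},t_{\sup}$ are attained within $O(||\Delta||)$ steps (in particular that they are rational with controlled denominator). This is the only place that genuinely uses that all powers up to~$N$ sit in their super summit sets; everything else is bookkeeping in the prefix and suffix lattices together with Lemma~\ref{L:rigidity_conditions}.
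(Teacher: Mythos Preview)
Your two-step plan is exactly the paper's: first pass to a power where $\inf$ and $\sup$ behave linearly, then stabilise the monotone chains of initial and final factors along its multiples. Step~2 and the reduction via Lemma~\ref{L:rigidity_conditions} match the paper essentially word for word.

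The only real gap is Step~1, and it is the one you flag yourself. The paper does not argue this from scratch; it simply quotes Lee--Lee~\cite{LL_geodesic}: for every $x\in B_n$ there exists $k\leqslant||\Delta||^2$ such that every element of $SSS(x^k)$ is periodically geodesic, i.e.\ $\inf(z^t)=t\inf(z)$ and $\sup(z^t)=t\sup(z)$ for all $t>0$. Since $y\in SC^{[N]}(x)$ and $k<N$, one has $y^k\in SSS(x^k)$, so $y^k$ is periodically geodesic and Step~1 is done with $d=k$. Your sketch (``the per-step defect can strictly increase only boundedly often, each increase tied to an extra $\Delta$-factor'') is not a proof that $t_{\inf}$ is attained at some $d_{\inf}\leqslant||\Delta||$; producing such a bound is precisely the non-trivial content of~\cite{LL_geodesic}, and it does not drop out of subadditivity of the defect sequence alone. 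Incidentally, citing~\cite{LL_geodesic} also yields the clean bound $m=kt\leqslant||\Delta||^2(||\Delta||-1)=N$ directly, whereas your lcm route plus ``careful bookkeeping'' does not obviously reach the stated bound---and you cannot appeal to $y$ being non-periodic, since the proposition is stated for arbitrary~$x$.
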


\begin{proof}
In~\cite{LL_geodesic} it is shown that for every $x\in B_n$ there exists some $k\leqslant ||\Delta||^2$ such that every element in $SSS(x^k)$ is periodically geodesic. That is, for every $z\in SSS(x^k)$ one has $\inf(z^t)=t\cdot\inf(z)$ and $\sup(z^t)=t\cdot\sup(z)$ for all~$t>0$. In particular, since $y\in SC^{[N]}(x)$ and~$k<N$, one has $y^k\in SC(x^k)\subset SSS(x^k)$, so~$y^k$ is periodically geodesic. This means that $\inf(y^{kt})=t\cdot\inf(y^k)$ and $\sup(y^{kt})=t\cdot\sup(y^k)$ for all~$t>0$.

Once $y^k$ is known to be periodically geodesic, one has a chain $\iota(y^k)\preccurlyeq \iota(y^{2k}) \preccurlyeq \iota(y^{3k}) \preccurlyeq \cdots$ (the initial factor of~$y^{ik}$ is a prefix of the initial factor of $y^{(i+1)k}$).  Notice that this chain stabilizes at the first repetition, hence it must stabilize in less than~$||\Delta||$ steps. In the same way, since~$y^k$ is periodically geodesic one has a chain $\cdots \succcurlyeq \varphi(y^{3k}) \succcurlyeq \varphi(y^{2k}) \succcurlyeq \varphi(y^k)$ (the final factor of~$y^{ik}$ is a suffix of the final factor of $y^{(i+1)k}$), which must also stabilize in less than~$||\Delta||$ steps. Therefore, for some $t\leqslant ||\Delta||-1$ one has $\iota(y^{tk})=\iota(y^{2tk})$ and $\varphi(y^{tk})=\varphi(y^{2tk})$. We can take $m=kt\leqslant||\Delta||^3-||\Delta||^2$ and we will have, on the one hand, $\inf(y^{2m})=2\inf(y^m)$ and $\iota(y^{2m})=\iota(y^m)$ (thus $\mathcal R(y^m)>0$), and on the other hand $\sup(y^{2m})=2\sup(y^m)$ and $\varphi(y^{2m})=\varphi(y^m)$ (thus $\mathcal R(y^{-m}))>0$), so the result follows.
\end{proof}

Now we will place ourselves in the case in which a braid $y\in SC(x)$ satisfies the above rigidity conditions, that is, $\mathcal R(y)>0$ and $\mathcal R(y^{-1})>0$ (by Proposition~\ref{P:rigidity of some power} we know how to find a braid which fulfill these requirements). We saw in Proposition~\ref{P:preferred prefix is rigid} that in this case $P(y)$ is rigid. We will now see that, if for some reason we need to consider some power of $y$, this makes no harm, as every power of $y$ satisfies the same properties  (even the property of belonging to a sliding circuit).

\begin{proposition}\label{P:rigidity and preferred conjugators}
Let $x\in B_n$ and $y\in SC(x)$ with $\ell(y)>0$. If $\mathcal R(y)>0$ and $\mathcal R(y^{-1})>0$, then for every $m\geq 1$ one has  $y\in SC^{[m]}(x)$ (that is, $y^m\in SC(x^m)$), $\mathcal R(y^m)>0$, $\mathcal R(y^{-m})>0$, and $P(y)$ is a positive power of $P(y^m)$.
\end{proposition}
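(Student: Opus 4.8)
The plan is to establish the four assertions in a logical cascade, using the characterizations of the rigidity conditions from Lemma~\ref{L:rigidity_conditions} as the main tool. First I would observe that, since $\mathcal R(y)>0$ and $\mathcal R(y^{-1})>0$, Lemma~\ref{L:rigidity_conditions} gives $\inf(y^m)=m\inf(y)$, $\iota(y^m)=\iota(y)$, $\sup(y^m)=m\sup(y)$ and $\varphi(y^m)=\varphi(y)$ for every $m\geq 1$. From the first pair of equalities one immediately deduces $\mathcal R(y^m)>0$: indeed, $\inf((y^m)^2)=\inf(y^{2m})=2m\inf(y)=2\inf(y^m)$ and $\iota((y^m)^2)=\iota(y^{2m})=\iota(y)=\iota(y^m)$, which is exactly condition~(2) of Lemma~\ref{L:rigidity_conditions} for $y^m$. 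The same argument applied to $y^{-1}$ (whose relevant invariants also satisfy the analogous equalities) yields $\mathcal R(y^{-m})>0$. So two of the four claims come essentially for free from Lemma~\ref{L:rigidity_conditions}.

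Next I would address the claim $y^m\in SC(x^m)$. Since $y\in SC(x)\subseteq SSS(x)$, and since the rigidity conditions force $\inf(y^m)=m\inf(y)$ and $\sup(y^m)=m\sup(y)$, the element $y^m$ has $\ell(y^m)=\sup(y^m)-\inf(y^m)=m\,\ell(y)$, which is the minimal canonical length of a power — in particular $y^m$ should be shown to lie in $SSS(x^m)$ (any conjugate of $x^m$ has canonical length at least $\ell$ of the super summit set, and periodic geodesy of $y^m$, or a direct comparison, gives this). Then I would show $\mathfrak s(y^m)=(\mathfrak s(y))^m$: the preferred prefix $\mathfrak p(y^m)=\iota(y^m)\wedge\iota(y^{-m})=\iota(y)\wedge\iota(y^{-1})=\mathfrak p(y)$, so conjugating $y^m$ by $\mathfrak p(y^m)=\mathfrak p(y)$ gives $\mathfrak p(y)^{-1}y^m\mathfrak p(y)=(\mathfrak p(y)^{-1}y\mathfrak p(y))^m=\mathfrak s(y)^m$. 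Iterating, and using that the rigidity conditions are preserved along the sliding circuit of $y$ by Lemma~\ref{L:rigidity_and_sliding} (so that the computation $\mathfrak p((\mathfrak s^i(y))^m)=\mathfrak p(\mathfrak s^i(y))$ is valid at each step), one gets $\mathfrak s^j(y^m)=(\mathfrak s^j(y))^m$ for all $j$. Since the orbit of $y$ under $\mathfrak s$ is a circuit of some length $N$, the orbit of $y^m$ returns to $y^m$ after $N$ steps, so $y^m\in SC(y^m)=SC(x^m)$.

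Finally, for the statement that $P(y)$ is a positive power of $P(y^m)$: from the previous step the sliding circuit of $y^m$ is $\{(\mathfrak s^i(y))^m : 0\le i<N'\}$ where $N'$ is the circuit length of $y^m$ under $\mathfrak s$, and the conjugating elements are $\mathfrak p((\mathfrak s^i(y))^m)=\mathfrak p(\mathfrak s^i(y))$. Since $\mathfrak s^i(y^m)=(\mathfrak s^i(y))^m$, the orbit of $y^m$ has the same length $N$ as the orbit of $y$, and $P(y^m)=\prod_{i=0}^{N-1}\mathfrak p(\mathfrak s^i(y^m))=\prod_{i=0}^{N-1}\mathfrak p(\mathfrak s^i(y))=P(y)$. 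Thus in fact $P(y^m)=P(y)$, so trivially $P(y)$ is a positive power (the first power) of $P(y^m)$. I expect the main obstacle to be the careful bookkeeping in the step $\mathfrak s(y^m)=\mathfrak s(y)^m$ — specifically, verifying that $y^m$ stays in the super summit set so that $\mathfrak p(y^m)=\mathfrak p(y)$ at every stage, and confirming that the circuit length is genuinely unchanged rather than a proper divisor; Lemma~\ref{L:rigidity_and_sliding} and Lemma~\ref{L:rigidity_conditions} should handle this, but the interplay between "the orbit of $y$" and "the orbit of $y^m$" needs to be stated with care.
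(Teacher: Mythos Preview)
Your approach matches the paper's: deduce $\iota(y^m)=\iota(y)$ and $\varphi(y^m)=\varphi(y)$ from Lemma~\ref{L:rigidity_conditions}, conclude $\mathfrak p(y^m)=\mathfrak p(y)$ and hence $\mathfrak s(y^m)=\mathfrak s(y)^m$, propagate along the circuit via Lemma~\ref{L:rigidity_and_sliding}, and infer that iterated sliding of $y^m$ returns to $y^m$.

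There is, however, one genuine error in your final step. From $\mathfrak s^i(y^m)=(\mathfrak s^i(y))^m$ you only get that the circuit length $N'$ of $y^m$ \emph{divides} the circuit length $N$ of $y$; you cannot conclude $N'=N$. It may happen that two distinct elements $\mathfrak s^i(y)\neq\mathfrak s^j(y)$ in the circuit of $y$ have the same $m$th power, and neither Lemma~\ref{L:rigidity_and_sliding} nor Lemma~\ref{L:rigidity_conditions} rules this out, contrary to what you hope. The paper handles exactly this point: when it occurs, $P(y^m)$ is strictly shorter than $P(y)$, and traversing the circuit of $y$ once amounts to traversing the circuit of $y^m$ exactly $N/N'$ times, so that $P(y)=P(y^m)^{N/N'}$. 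This is why the proposition only asserts that $P(y)$ is a \emph{positive power} of $P(y^m)$, not that they are equal. Your argument proves the proposition as stated once you replace the unjustified claim $N'=N$ by the observation $N'\mid N$.

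A minor remark: your detour through $y^m\in SSS(x^m)$ is unnecessary. The identity $\mathfrak p(y^m)=\mathfrak p(y)$ follows directly from $\iota(y^m)=\iota(y)$ and $\varphi(y^m)=\varphi(y)$, independently of any summit-set membership; and Lemma~\ref{L:rigidity_and_sliding} is being applied to $y$ (which lies in $SSS(x)$ since $y\in SC(x)$), not to $y^m$.
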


\begin{proof}
We recall from~\cite[Proposition 3.9]{BGG1} that if $y\in USS(x)$ and $\ell(y)>0$, then $\mathcal R(y)\leqslant \mathcal R(y^m)$ for all~$m\geqslant 1$. Hence, if $y\in SC(x)$ is such that $\mathcal R(y)>0$ and $\mathcal R(y^{-1})>0$, the same happens for every power of~$y$.

By Lemma~\ref{L:rigidity_conditions}, $\iota(y^m)=\iota(y)$ and $\varphi(y^m)=\varphi(y)$. Hence $\mathfrak p(y^m)=\iota(y^m)\wedge \partial(\varphi(y^m))=\iota(y)\wedge \partial(\varphi(y))=\mathfrak p(y)$.  Therefore $\mathfrak s(y^m)= (\mathfrak s(y))^m$.  By Lemma~\ref{L:rigidity_and_sliding}, $\mathfrak s(y)$ also satisfies the required rigidity conditions, that is, $\mathcal R(\mathfrak s(y))>0$ and $\mathcal R(\mathfrak s(y)^{-1})>0$. Hence $\mathfrak p(\mathfrak s(y)^m) = \mathfrak p(\mathfrak s(y))$ and then $\mathfrak s^2(y^m)=\mathfrak s(\mathfrak s(y^m))= \mathfrak s(\mathfrak s(y)^m)= (\mathfrak s^2(y))^m$ for every~$m>0$.  Iterating this argument, one obtains $\mathfrak p((\mathfrak s^t(y))^m) = \mathfrak p(\mathfrak s^t(y))$ and $\mathfrak s^t(y^m)=(\mathfrak s^t(y))^m$ for every $t,m>0$.  In other words, applying iterated cyclic sliding to~$y^m$ is the same thing as applying iterated cyclic sliding to~$y$ and then taking the~$m$th power, since the conjugating elements coincide. As~$y$ is in a sliding circuit, applying iterated cyclic sliding leads back to~$y$, 
and the same happens to~$y^m$. That is, $y^m$ is also in a sliding circuit, as we wanted to show. Moreover, some positive power of $P(y^m)$ equals $P(y)$ as the preferred prefixes along the circuits of~$y$ and~$y^m$ coincide. Actually, we will have $P(y^m)=P(y)$, unless there is some $z$ in the sliding circuit of $y$ such that $z^m=y^m$, in which case $P(y^m)$ will be shorter than $P(y)$, but continuing along the sliding circuit of $y^m$ one will obtain several repetitions of $P(y^m)$ being equal to $P(y)$.
\end{proof}

We end this section with a result about preferred conjugators which we shall need soon. It says that the preferred conjugators of any two elements in the same set of sliding circuits are conjugate, up to raising those preferred conjugators to some suitable powers. This will allow us to obtain information concerning $P(y)$, for some $y\in SC(x)$, just by comparing $P(y)$ with $P(z)$, for some other $z\in SC(x)$. This time we do not require any rigidity condition.

\begin{lemma}\label{L:preferred conjugators are conjugate}
Let $x\in B_n$ and $y,z\in SC(x)$. Then $P(y)^s$ is conjugate to $P(z)^t$ for some $s,t>0$, and one can take as conjugating element any braid $\alpha$ conjugating $y$ to $z$.
\end{lemma}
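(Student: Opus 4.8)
The plan is to exploit the two defining features of the preferred conjugator: (a) conjugating $y$ by $P(y)$ returns $y$ (because $y$ lies on its own sliding circuit, so $P(y)$ is the product of preferred prefixes all the way around the circuit and back to the start), hence $P(y)$ commutes with $y$; and (b) the sliding circuit is a conjugacy invariant "object", so the preferred conjugator of a conjugate is the corresponding conjugate of the preferred conjugator, traversed possibly a different number of times around the (same, up to relabelling) circuit. Concretely, let $\alpha$ be any braid with $y^\alpha = z$. Since cyclic sliding is natural with respect to conjugation --- $\mathfrak{p}(w^\alpha)$ and $\mathfrak{s}(w^\alpha)$ are obtained from $\mathfrak{p}(w)$, $\mathfrak{s}(w)$ in the obvious way only when $\alpha$ itself is a preferred prefix, which is \emph{not} the case here --- I will instead argue at the level of the whole conjugacy class. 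The key observation is that $P(y)^s$, for $s$ large enough, is $y$-central and has a canonical description: iterating $\mathfrak{s}$ starting from $y$ runs periodically through the circuit $y = y^{(0)}, y^{(1)}, \dots, y^{(t-1)}, y^{(t)} = y$, and $P(y)$ is exactly the product of conjugators $\mathfrak{p}(y^{(0)})\cdots\mathfrak{p}(y^{(t-1)})$ realizing one full loop.

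First I would set up the bijection between the two circuits. Since $y$ and $z$ are both in $SC(x)$ and are conjugate, they lie in sliding circuits of the \emph{same} conjugacy class; a priori these could be distinct circuits, but that does not matter. Conjugation by $\alpha$ is a bijection from the conjugacy class to itself sending $y$ to $z$. The point is to show that $P(y)^s$ and $P(z)^t$ become conjugate by $\alpha$ once $s,t$ are chosen to be common multiples that "synchronize" the two loops. I would do this by considering the element $\gamma = P(y)^{s}$ for $s$ a multiple of the circuit length of $y$ chosen so that $\gamma$ equals the conjugating element one gets by going around the $y$-circuit enough times; then $y^\gamma = y$, so $\gamma$ centralizes $y$. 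Conjugating the identity $y^\gamma = y$ by $\alpha$ gives $z^{(\gamma^\alpha)} = z$, i.e. $\gamma^\alpha = \alpha^{-1}P(y)^s\alpha$ centralizes $z$. It remains to identify this centralizing element with a power of $P(z)$. For that I would use that the preferred conjugator, among all elements of the centralizer of $z$ that arise from iterated cyclic sliding, is the "primitive loop" generator: any element obtained by iterating $\mathfrak{s}$ on $z$ until a repetition and multiplying the conjugators is a power of $P(z)$ (and conversely). Since $\alpha^{-1}P(y)^s\alpha$ arises precisely this way --- it is the transported version, via $\alpha$, of the loop-conjugator of the $y$-circuit, and transport of iterated cyclic sliding on $y$ gives iterated cyclic sliding on $z$ up to the relabelling of the circuit --- it must be $P(z)^t$ for a suitable $t>0$ (an integer multiple of the number of times $s$ loops divided by the length of the $z$-circuit, cleared of denominators).

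The main obstacle I anticipate is the last identification step: verifying rigorously that conjugating the $y$-circuit by $\alpha$ carries the "loop conjugator" $P(y)$ to a power of $P(z)$, rather than to some unrelated centralizing element. This is where one must be careful, because a general conjugator $\alpha$ does \emph{not} intertwine $\mathfrak{s}$ (only preferred prefixes do). The correct way around this, which I would make precise, is: $P(y)$ is characterized intrinsically as a generator (up to positive powers) of the subgroup of the centralizer $Z(y)$ consisting of products of preferred prefixes along $\mathfrak{s}$-orbits --- more simply, $P(y)$ is determined by $y$ alone, and conjugation by $\alpha$ sends "the structure $(y, \mathfrak{s}, \mathfrak{p})$ at $y$" isomorphically to "the structure at $z$", because $\mathfrak{s}$ and $\mathfrak{p}$ are defined purely in terms of the left normal form and the Garside structure, which are conjugation-equivariant \emph{as long as one conjugates everything, including $\Delta$-related data, simultaneously}. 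Since $\alpha\Delta\alpha^{-1} \neq \Delta$ in general, one does not get $\mathfrak{p}(y^\alpha) = \mathfrak{p}(y)^\alpha$; but one does get that $\alpha^{-1}P(y)\alpha$ lies in $Z(z)$ and is "a loop conjugator for the $\mathfrak{s}$-dynamics on the $\alpha$-conjugated circuit", and the $\alpha$-conjugated circuit, being a sliding circuit containing $z$, has $P(z)$ (up to powers) as its loop conjugator too, whence $\alpha^{-1}P(y)^{s}\alpha = P(z)^{t}$ for appropriate $s,t>0$. I would phrase the final write-up in terms of these loop conjugators in $Z(z)$ to avoid the false equivariance of $\mathfrak{p}$, and I expect that to be the only delicate point.
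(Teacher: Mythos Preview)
Your argument has a genuine gap at the step you yourself flag as delicate. You correctly observe that a general conjugator $\alpha$ does not intertwine $\mathfrak s$ or $\mathfrak p$, but then you nevertheless assert that the $\alpha$-conjugate of the sliding circuit of $y$ ``is a sliding circuit containing $z$''. This is false in general: conjugating $\{\mathfrak s^i(y)\}$ by $\alpha$ gives a set of conjugates of $x$ one of which is $z$, but there is no reason these are the iterated cyclic slidings of $z$, precisely because $\mathfrak s$ depends on the Garside normal form and not just on the group structure. Consequently your claim that $\alpha^{-1}P(y)^s\alpha$ is ``a loop conjugator for the $\mathfrak s$-dynamics'' at $z$, and hence a power of $P(z)$, is unsupported. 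Knowing only that $\alpha^{-1}P(y)^s\alpha$ lies in the centralizer $Z(z)$ is far too weak: $Z(z)$ is typically much larger than the cyclic group generated by $P(z)$.

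The missing ingredient, used in the paper, is the \emph{transport map} of \cite{GG1}. Given $\alpha$ conjugating $y$ to $z$, transport produces a new element $\alpha^{(1)}$ conjugating $\mathfrak s(y)$ to $\mathfrak s(z)$, and iterating gives $\alpha^{(k)}$ conjugating $\mathfrak s^k(y)$ to $\mathfrak s^k(z)$, via the explicit formula
\[
\alpha^{(k)}=\bigl(\mathfrak p(y)\,\mathfrak p(\mathfrak s(y))\cdots \mathfrak p(\mathfrak s^{k-1}(y))\bigr)^{-1}\,\alpha\,\bigl(\mathfrak p(z)\,\mathfrak p(\mathfrak s(z))\cdots \mathfrak p(\mathfrak s^{k-1}(z))\bigr).
\]
The nontrivial input is \cite[Lemma~8]{GG1}: since $z\in SC(x)$, one has $\alpha^{(sN)}=\alpha$ for some $s>0$, where $N$ is the length of the circuit of $y$. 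This forces $sN$ to also be a multiple $tM$ of the length of the circuit of $z$, and substituting into the formula with $k=sN$ gives directly $\alpha^{-1}P(y)^s\alpha=P(z)^t$. The transport construction is exactly the rigorous replacement for your heuristic ``conjugation carries one circuit to the other''; without it the identification of $\alpha^{-1}P(y)^s\alpha$ with a power of $P(z)$ cannot be made.
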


\begin{proof}
Let~$N$ and~$M$ be the lengths of the sliding circuits of~$y$ and~$z$, respectively. That is, $\mathfrak s^N(y)=y$ and $\mathfrak s^M(z)=z$.  Let~$\alpha$ be such that $\alpha^{-1}y\alpha=z$. We can apply to~$\alpha$ the transport map defined in~\cite{GG1}. If one applies this transport map~$k$ times to~$\alpha$, we obtain an element denoted~$\alpha^{(k)}$, which is a conjugating element from $\mathfrak s^k(y)$ to $\mathfrak s^k(z)$. Namely,
\begin{equation}\label{eqn_transport}
    \alpha^{(k)}=\left(\mathfrak p(y) \mathfrak p(\mathfrak s(y))\cdots \mathfrak p(\mathfrak s^{k-1}(y)) \right)^{-1} \alpha \; \left(\mathfrak p(z) \mathfrak p(\mathfrak s(z))\cdots \mathfrak p(\mathfrak s^{k-1}(z)) \right).
\end{equation}

   In~\cite[Lemma 8]{GG1} it is shown that, in this situation,  $z\in SC(x)$ if and only if $\alpha^{(sN)}=\alpha$ for some~$s>0$. This means that~$\alpha$ conjugates $\mathfrak s^{sN}(y)=y$ to $\mathfrak s^{sN}(z)$, but since the conjugate of~$y$ by~$\alpha$ is precisely~$z$, it follows that $\mathfrak s^{sN}(z)=z$ hence $sN=tM$ for some~$t>0$. But then Equality~(\ref{eqn_transport}), replacing~$k$ by~$sN$, reads $\alpha = \left(P(y)^s\right)^{-1} \alpha \: P(z)^t$ or, in other words, $\alpha^{-1} P(y)^s \alpha =  P(z)^t$.
\end{proof}


\subsection{Sliding circuits and canonical reduction systems}

\begin{proposition}\label{P:CRS_preferred conjugator}
Let~$x\in B_n$ and $y\in SC(x)$. If $\mathcal R(y)>0$ and $\mathcal R(y^{-1})>0$, then $CRS(P(y)) \subset CRS(y)$.
\end{proposition}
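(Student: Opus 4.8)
The plan is to exploit the rigidity of $P(y)$ together with the fact that $P(y)$ commutes with $y$. First I would recall that, by Proposition~\ref{P:preferred prefix is rigid}, the hypotheses $\mathcal R(y)>0$ and $\mathcal R(y^{-1})>0$ guarantee that $P(y)$ is rigid; moreover, since $y\in SC(x)$, the element $P(y)$ is the conjugating element around the whole sliding circuit of $y$, so it commutes with $y$. The commutation relation immediately gives $CRS(y)=CRS(y)^{P(y)}$, so $P(y)$ permutes the (finite) family of curves $CRS(y)$, and hence some positive power $P(y)^k$ fixes each curve of $CRS(y)$ setwise; that is, every curve in $CRS(y)$ is a reduction curve for $P(y)^k$, and therefore for $P(y)$ itself. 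This shows $CRS(y)\subseteq$ (reduction curves of $P(y)$). The real content of the proposition is the \emph{reverse-flavoured} statement that no \emph{new} essential reduction curves are created: $CRS(P(y))\subseteq CRS(y)$, i.e.\ every essential reduction curve of $P(y)$ is already an (essential) reduction curve of $y$.

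To prove this, I would argue by contradiction: suppose $\mathcal D\in CRS(P(y))$ but $\mathcal D\notin CRS(y)$. Since $P(y)$ commutes with $y$, the braid $y$ sends $CRS(P(y))$ to itself, so $[\mathcal D]^{y^j}$ runs through a finite $y$-orbit inside $CRS(P(y))$; replacing $y$ by a suitable power (which is harmless, as $CRS$ is insensitive to taking nonzero powers and the rigidity hypotheses pass to powers by Proposition~\ref{P:rigidity and preferred conjugators}) we may assume $[\mathcal D]^y=[\mathcal D]$. Thus $\mathcal D$ is a reduction curve for $y$ that is not essential, so it must intersect some curve of $CRS(y)$. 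Now I would bring in rigidity of $P(y)$: a rigid braid has extremely constrained dynamics on curves — its action is, in an appropriate sense, ``already normalised'', and one should be able to show that a rigid braid cannot have an essential reduction curve that is transverse to a nonempty invariant family of curves coming from a commuting braid. Concretely, the approach is to pass to the external component / the decomposition of $y$ along $CRS(y)$, use that along a sliding circuit the canonical reduction curves of the pieces behave compatibly (Lemmas~\ref{L:lnf_round_curves}, \ref{L:gcd_round_curves}, \ref{L:iota_round_curves} and Proposition~\ref{P:preferred_prefix_round_curves} after conjugating so that $CRS(y)$ is round, which is legitimate by Corollary~\ref{C:round curves in SC^m}), and deduce that $P(y)$ decomposes along $CRS(y)$ too; a rigid braid that decomposes along a family of round curves cannot acquire extra essential reduction curves because each component inherits rigidity-type control, forcing $CRS(P(y))$ to be built from $CRS(y)$ and the canonical reduction systems of the components of $P(y)$, all of which are already reduction curves of $y$.

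The key steps, in order, are: (1) record that $P(y)$ is rigid and commutes with $y$, hence permutes $CRS(y)$ and $CRS(P(y))$; (2) reduce, via powers, to the case where the putative new curve is $y$-invariant and, via a global conjugation and Corollary~\ref{C:round curves in SC^m}, to the case where $CRS(y)$ is a family of round curves; (3) show $P(y)$ decomposes along $CRS(y)$ using the behaviour of preferred prefixes/conjugators with respect to round curves (Proposition~\ref{P:preferred_prefix_round_curves} and Lemmas~\ref{L:lnf_round_curves}--\ref{L:iota_round_curves}); (4) analyse the components of $P(y)$ and their canonical reduction systems, using rigidity to rule out curves transverse to $CRS(y)$; (5) conclude $CRS(P(y))\subseteq CRS(y)$.

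I expect step~(4) to be the main obstacle: controlling exactly which curves can be essential reduction curves of a rigid braid, and in particular showing that rigidity forbids an essential reduction curve of $P(y)$ from crossing the round family $CRS(y)$, is the delicate point. It likely requires a careful argument about how rigid braids act on arcs and curves — in the spirit of the arc/tangent-direction analysis of Section~\ref{S:reduction_curves}, but now on the algebraic (normal-form) side — together with the compatibility of the decomposition of $y$ and $P(y)$ along $CRS(y)$. The commutation $P(y)y=yP(y)$ will be used repeatedly to transfer invariance statements back and forth between the two braids.
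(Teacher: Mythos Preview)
Your broad strategy---reduce to the case where $CRS(y)$ consists of round curves, decompose $P(y)$ along $CRS(y)$, and analyse the components---matches the paper's. But two concrete steps contain genuine gaps.

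First, the contradiction setup has a false inference. You claim that if $\mathcal D$ is a $y$-invariant curve with $\mathcal D\notin CRS(y)$, then $\mathcal D$ must intersect some curve of $CRS(y)$. This is not true: a non-essential reduction curve of $y$ can be disjoint from $CRS(y)$---any curve inside a periodic component of $y$ gives an example. In fact the opposite holds here: since every curve of $CRS(y)$ is a reduction curve of $P(y)$ and $\mathcal D$ is \emph{essential} for $P(y)$, the curve $\mathcal D$ is automatically disjoint from $CRS(y)$. So the real task is not to rule out curves transverse to $CRS(y)$ (there are none), but to rule out essential reduction curves of $P(y)$ lying \emph{inside} a complementary component. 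Your step~(4) is thus aimed at the wrong target, and rigidity of $P(y)$ is not the tool that settles it. The paper argues component by component: if $y_{[\mathcal C\in\mathcal F]}$ is pseudo-Anosov, then $P(y)_{[\mathcal C\in\mathcal F]}$ commutes with it and is therefore pseudo-Anosov or periodic; if $y_{[\mathcal C\in\mathcal F]}$ is periodic (hence, after making $y$ pure, a power of the full twist), Proposition~\ref{P:preferred_prefix_round_curves} forces each $\mathfrak p(\mathfrak s^i(y))_{[\mathcal C\in\mathcal F]}$ to be trivial or a half twist, so $P(y)_{[\mathcal C\in\mathcal F]}$ is a power of~$\Delta$. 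Either way the component of $P(y)$ has no essential reduction curves.

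Second, the passage ``via a global conjugation \ldots\ to the case where $CRS(y)$ is round'' hides a real difficulty. Conjugating $y$ to some $z$ with round $CRS(z)$ does \emph{not} conjugate $P(y)$ to $P(z)$: preferred prefixes and sliding circuits are defined through normal forms and are not conjugation-equivariant. The paper first proves the statement for a well-chosen $z$ with round reduction curves, and then invokes Lemma~\ref{L:preferred conjugators are conjugate}, which shows that some powers $P(y)^t$ and $P(z)^s$ are conjugate by the same element that conjugates $z$ to $y$. This is precisely what transports the inclusion $CRS(P(z))\subset CRS(z)$ back to $CRS(P(y))\subset CRS(y)$. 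Without this lemma (or an equivalent), your reduction step does not go through.
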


\begin{proof}
Notice that the result holds if~$y$ is periodic, since the only periodic elements satisfying the rigidity hypothesis are powers of~$\Delta$, and then $P(y)=1$, so both canonical reduction systems are empty. If~$y$ is pseudo-Anosov the result also holds, since~$P(y)$ is in the centralizer of~$y$ so it must be either pseudo-Anosov or periodic~\cite{GW}, and in either case $CRS(P(y))=\emptyset=CRS(y)$. We can then assume that~$y$ is non-periodic and reducible, that is, $CRS(y)\neq \emptyset$. And of course we can assume that $CRS(P(y))\neq \emptyset$, otherwise the result is trivially true.

By Proposition~\ref{P:rigidity and preferred conjugators}, we can make the further assumption that~$y$ is pure, since~$y^m$ will satisfy the same hypothesis as~$y$, and the canonical reduction systems of $y$ and of its preferred conjugator are preserved by taking powers of $y$. Replacing~$P(y)$ by a power if necessary in the following discussion, we will also assume that~$P(y)$ is pure.

Let $\mathcal F=CRS(y)\cup \{\partial(D^2)\}$, and let us assume for a moment that all curves in~$\mathcal F$ are round. Since~$P(y)$ is pure and commutes with~$y$, $P(y)$~sends $\mathcal F$ to itself, curve-wise. This implies that an essential reduction curve of $P(y)$ either belongs to $\mathcal F$ (as we want to show) or can be isotoped to be disjoint from $\mathcal F$. In the latter case, it would correspond to an essential reduction curve of $P(y)_{[\mathcal C\in \mathcal F]}$ for some $[\mathcal C]\in \mathcal F$.  Thus we must show that $P(y)_{[\mathcal C\in \mathcal F]}$ does not admit an essential reduction curve, for every $[\mathcal C]\in \mathcal F$.

Let then $[\mathcal C]\in \mathcal F$. We know that $y_{[\mathcal C\in \mathcal F]}$ is either periodic or pseudo-Anosov, and that the braid $P(y)_{[\mathcal C\in \mathcal F]}$ commutes with $y_{[\mathcal C\in \mathcal F]}$. If $y_{[\mathcal C\in \mathcal F]}$ is pseudo-Anosov, then $P(y)_{[\mathcal C\in \mathcal F]}$ must be either pseudo-Anosov or periodic, hence it admits no essential curves. If $y_{[\mathcal C\in \mathcal F]}$ is periodic, it has to be a power of the full twist, since~$y$ is pure. But in this case Proposition~\ref{P:preferred_prefix_round_curves} tells us that $\mathfrak p(y)_{[\mathcal C\in \mathcal F]}$ is either trivial or a half twist (here we use that $\mathcal F$ consists of round curves). Hence, applying cyclic sliding to~$y$, we obtain a braid whose component associated to~${\mathcal C}$ is also a power of the half twist, and we can repeat the argument until one gets back to~$y$, to conclude that $P(y)_{[\mathcal C\in \mathcal F]}$ is a (possibly trivial) power of~$\Delta$. Hence $P(y)_{[\mathcal C\in \mathcal F]}$ does not admit an essential curve, also in this case.
Therefore, all essential reduction curves of $P(y)$ are essential curves of $y$, that is, $CRS(P(y))\subset CRS(y)$ if  $CRS(y)$ is a family of round curves.

Now we show the general case, in which the curves in $CRS(y)$ are not necessarily round. We cannot apply the above argument as we do not know, a priori, that the components of $P(y)$ corresponding to the periodic components of $y$ are powers of $\Delta$. Nevertheless, we will be able to show this by comparing preferred prefixes with the aid of Lemma~\ref{L:preferred conjugators are conjugate}. We just need to find a suitable braid whose reduction curves are round and which satisfies the hypothesis of Proposition~\ref{P:CRS_preferred conjugator}, that is, it belongs to a sliding circuit, and both the braid and its inverse have nonzero rigidity.

By Corollary~\ref{C:round curves in SC^m}, for every~$N>0$ there is some element $z\in SC^{[N]}(y)$ whose essential curves are all round. We can then take $N=||\Delta||^3-||\Delta||^2$ and use Proposition~\ref{P:rigidity of some power} to conclude that for some~$m$ with $0<m\leqslant N$ we have $\mathcal R(z^m)>0$ and $\mathcal R(z^{-m})>0$. As $m\leq N$, we also have $z^m\in SC(y^m)$. Notice that the canonical reduction systems of~$z$ and~$z^m$ coincide, so~$z^m$ is a braid whose canonical reduction system is made of round curves, which belongs to a sliding circuit, and such that $\mathcal R(z^m)>0$ and $\mathcal R(z^{-m})>0$, so $z^m$ is the braid we were looking for. To simplify notation, we recall from Proposition~\ref{P:rigidity and preferred conjugators} that the result will be shown for $y$ if it is shown for $y^m$, so we can replace $y$ by $y^m$, and this will replace $z$ by $z^m$. We can then assume that $z$ is a braid whose canonical reduction system is made of round curves, which belongs to a sliding circuit, and such that $\mathcal R(z)>0$ and $\mathcal R(z^{-1})>0$.


As the result is shown for elements whose canonical reduction system is made of round curves, $CRS(P(z))\subset CRS(z)$. But recall from Lemma~\ref{L:preferred conjugators are conjugate} that $P(z)^s$ is conjugate to $P(y)^t$ for some~$s,t>0$, and that a conjugating element~$\alpha$ is precisely a conjugating element from~$z$ to~$y$. Since the essential curves of $P(z)$ and $P(z)^s$ coincide, we have $CRS(P(z)^s)=CRS(P(z))\subset CRS(z)$. Conjugating both $P(z)^s$ and~$z$ by~$\alpha$, corresponds to applying~$\alpha$ to their essential curves, hence it follows that $CRS(P(y)^t)\subset CRS(y)$. As the essential curves of $P(y)^t$ and $P(y)$ coincide, this means $CRS(P(y))\subset CRS(y)$, as we wanted to show.
\end{proof}

We have now assembled most of the ingredients for showing that our main
result, Theorem~\ref{T:main}, follows from the rigid case. The key lemma
for this reduction to the rigid case is as follows.

\begin{lemma}\label{L:main_detailed}
Let $x\in B_n$ be a non-periodic, reducible braid. Let $N=||\Delta||^3-||\Delta||^2$. For every element $y\in SC^{[N]}(x)$ there is some $m\leqslant N$ such that either $y^m$ is rigid, or $P(y^m)$ is rigid, admits essential reduction curves, and all its essential reduction curves are essential reduction curves of~$y$.
\end{lemma}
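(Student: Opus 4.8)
\emph{Proof idea.} The plan is to read the lemma off from Propositions~\ref{P:rigidity of some power}, \ref{P:preferred prefix is rigid} and~\ref{P:CRS_preferred conjugator}, the only genuinely new input being a dichotomy governed by the preferred prefix of a suitable power of $y$. First I would record the trivial reductions: since $y\in SC^{[N]}(x)$ we have $y\in SC(x)$, so $y$ is conjugate to $x$, hence non-periodic and reducible; thus $CRS(y)\neq\emptyset$, and no power of $y$ is a power of $\Delta$, so $\ell(y^k)>0$ for every $k\geqslant 1$. Next I would apply Proposition~\ref{P:rigidity of some power} to produce an integer $m$ with $0<m<N$ and $\mathcal R(y^m)>0$, $\mathcal R(y^{-m})>0$. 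Since $m\leqslant N$, the definition of $SC^{[N]}(x)$ gives $y^m\in SC(x^m)$, so I may feed $y^m$ into Proposition~\ref{P:preferred prefix is rigid} (its hypotheses $\ell(y^m)>0$, $\mathcal R(y^m)>0$, $\mathcal R(y^{-m})>0$, $y^m\in SC(x^m)$ all hold): this gives that $P(y^m)$ is rigid and, by its proof, that the left normal form of $P(y^m)$ is precisely the concatenation $\mathfrak p(y^m)\,\mathfrak p(\mathfrak s(y^m))\cdots\mathfrak p(\mathfrak s^{L-1}(y^m))$ of the preferred prefixes taken once around the sliding circuit of $y^m$, where $L$ is the length of that circuit; in particular, when $L\geqslant 2$ every factor here is a nontrivial proper simple braid and $\ell(P(y^m))=L$. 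Likewise Proposition~\ref{P:CRS_preferred conjugator} applies to $y^m$ and gives $CRS(P(y^m))\subseteq CRS(y^m)=CRS(y)$, using that the canonical reduction system is unchanged under passing to nonzero powers.

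With these in hand I would argue by cases. If $CRS(P(y^m))\neq\emptyset$, then $P(y^m)$ is rigid, admits essential reduction curves, and all of them lie in $CRS(y)$; this is the second alternative, with $m\leqslant N$. If instead $CRS(P(y^m))=\emptyset$, then $P(y^m)$ is periodic or pseudo-Anosov. It cannot be pseudo-Anosov: $P(y^m)$ commutes with $y^m$, so were it pseudo-Anosov, then $y^m$, lying in its centralizer, would itself be pseudo-Anosov or periodic by~\cite{GW}, contradicting that $y^m$ is reducible and non-periodic. Hence $P(y^m)$ is periodic. If it had positive canonical length, then being rigid it would satisfy $\mathcal R(P(y^m))>0$ and $\mathcal R(P(y^m)^{-1})>0$ (rigidity of a braid of positive canonical length forces the left normal form of its square to be the standard one, which gives the latter via Lemma~\ref{L:rigidity_conditions}), and a periodic braid satisfying these conditions is a power of $\Delta$, as recalled in the proof of Proposition~\ref{P:CRS_preferred conjugator} --- absurd for a braid of positive canonical length. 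So $\ell(P(y^m))=0$, and comparing with the description of the left normal form of $P(y^m)$ above forces $L=1$ and $\mathfrak p(y^m)=1$ (a concatenation of two or more proper simple factors has positive canonical length, while a single preferred prefix, being $\preccurlyeq\iota(y^m)\neq\Delta$, can equal a power of $\Delta$ only if it is trivial). Finally, under $\mathcal R(y^m)>0$ and $\mathcal R(y^{-m})>0$ one has $\ell(y^{2m})=2\ell(y^m)$, so in the factorisation $(y^m)^2=\Delta^{2p}\tau^p(z_1)\cdots\tau^p(z_r)\,z_1\cdots z_r$, with $\Delta^p z_1\cdots z_r$ the left normal form of $y^m$, the only junction that can fail to be left-weighted is $\tau^p(z_r)z_1$, whose left-weightedness amounts exactly to $\partial(\varphi(y^m))\wedge\iota(y^m)=\mathfrak p(y^m)=1$; hence $\mathfrak p(y^m)=1$ says precisely that $y^m$ is rigid. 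This is the first alternative, and in both cases the lemma holds with $m\leqslant N$.

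The hard part will be this last case, $CRS(P(y^m))=\emptyset$: one must simultaneously exclude that $P(y^m)$ is pseudo-Anosov and that it is a ``genuine'' periodic braid, and then convert the residual information $\ell(P(y^m))=0$ into rigidity of $y^m$ itself. Both steps lean on the extra fact extracted from the proof of Proposition~\ref{P:preferred prefix is rigid}, namely that the preferred prefixes around the sliding circuit of $y^m$ are already in left-weighted position, so that the canonical length of $P(y^m)$ equals the length of that circuit; without it one could not rule out $P(y^m)=\Delta^k$ with $k\geqslant 1$. Everything else is bookkeeping with the cited propositions and the standard characterisation of rigidity in terms of left-weightedness.
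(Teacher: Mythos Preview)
Your proof is correct and follows essentially the same route as the paper's: invoke Proposition~\ref{P:rigidity of some power} to obtain~$m$, then Propositions~\ref{P:preferred prefix is rigid} and~\ref{P:CRS_preferred conjugator} to get that $P(y^m)$ is rigid with $CRS(P(y^m))\subset CRS(y)$, and finish by the dichotomy on whether $CRS(P(y^m))$ is empty, ruling out the pseudo-Anosov case via the centralizer and the nontrivial-periodic case via the explicit normal form of $P(y^m)$. The paper's version is slightly more direct in the endgame (it simply says ``the only rigid periodic braids are powers of~$\Delta$, and the normal form of $P(y^m)$ has no $\Delta$ factors, so $P(y^m)=1$''), whereas you route through $\ell(P(y^m))=0$ first and then unpack why $\mathfrak p(y^m)=1$ forces rigidity; but the content is the same.
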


\begin{proof}
Let $x\in B_n$ be a non-periodic, reducible braid, $N=||\Delta||^3-||\Delta||^2$ and $y\in SC^{[N]}(x)$. By Proposition~\ref{P:rigidity of some power} there is some power~$y^m$ with~$m\leqslant N$ such that $\mathcal R(y^m)>0$ and $\mathcal R(y^{-m})>0$. Notice also that $y^m\in SC(x^m)$. Hence~$y^m$ satisfies the hypothesis of Propositions~\ref{P:preferred prefix is rigid} and~\ref{P:CRS_preferred conjugator}, so $P(y^m)$ is rigid and $CRS(P(y^m))\subset CRS(y^m)$. If $CRS(P(y^m))\neq\emptyset$, the result follows.

Suppose on the contrary that $CRS(P(y^m))=\emptyset$. This means that $P(y^m)$ must be either periodic or pseudo-Anosov. It cannot be pseudo-Anosov, as it commutes with the non-periodic, reducible braid~$y^m$, while pseudo-Anosov elements can only commute with pseudo-Anosov or periodic ones. Hence $P(y^m)$ is periodic. Notice that $P(y^m)$ cannot be a nontrivial power of~$\Delta$, since by Proposition~\ref{P:preferred prefix is rigid} the left normal form of $P(y^m)$ is a product of preferred prefixes, each of them not equal to $\Delta$ by definition. As the only rigid, periodic braids are the powers of $\Delta$, it follows that $P(y^m)$ must be trivial. This is equivalent to saying that~$y^m$ is rigid.
\end{proof}

The following result tells us how to deal with the rigid case.
We will assume for the moment; it will be shown in the next section:

\begin{theorem}\label{T:rigid_case}
Let $\beta\in B_n$ be a non-periodic, reducible braid which is rigid. Then there is some positive integer $k\leqslant n$ such that one of the following conditions holds:
\begin{enumerate}
\item $\beta^k$ preserves a round essential curve, or

\item $\inf(\beta^k)$ and $\sup(\beta^k)$ are even, and either $\Delta^{-\inf(\beta^k)}\beta^k$ or $\beta^{-k}\Delta^{\sup(\beta^k)}$ is a positive braid which preserves an almost round essential reduction curve whose corresponding interior strands do not cross.
\end{enumerate}
In particular, some essential reduction curve for $\beta$ is either round or almost round.
\end{theorem}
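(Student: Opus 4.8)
The plan is to reduce the statement, via a carefully chosen power~$\beta^k$, to the situation already handled by Proposition~\ref{P:trivial interior -> almost round} (``trivial interior braids give round or almost-round curves'') and by Theorem~\ref{T:round_almostround} (efficient detection). First I would use the fact that $\beta$ is non-periodic and reducible to find a canonical reduction curve $\mathcal C\in CRS(\beta)$ which is \emph{outermost} in the sense that no other curve of $CRS(\beta)$ is enclosed by it; then the interior braid $\beta_{\mathcal C}$ is either periodic or pseudo-Anosov. Since $\beta$ permutes the finitely many curves of $CRS(\beta)$, there is some $k\leqslant n$ (in fact $k$ can be taken to be the size of the orbit of $[\mathcal C]$, which divides the order of the induced permutation, hence is at most $n/2$ since $\mathcal C$ is non-degenerate, leaving room to also arrange the parity conditions below) such that $[\mathcal C]^{\beta^k}=[\mathcal C]$. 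Replacing $k$ by a further bounded multiple if necessary, I can simultaneously guarantee that $\inf(\beta^k)$ and $\sup(\beta^k)$ are even: since $\beta$ is rigid, Lemma~\ref{L:rigidity_conditions} gives $\inf(\beta^k)=k\inf(\beta)$ and $\sup(\beta^k)=k\sup(\beta)$, so it suffices to absorb a factor of $2$ (or $4$) into $k$ while keeping $k\leqslant n$ — here one uses that $\mathcal C$ non-degenerate forces the orbit length to be at most $n/2$, so there is slack.

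Next I split into the two cases according to the Nielsen--Thurston type of the interior braid $\beta^k_{\mathcal C}$. \textbf{Case 1: $\beta^k_{\mathcal C}$ is pseudo-Anosov.} This is precisely the case that will be shown (in the next section, Section~\ref{S:reducible rigid braids}) to force $\mathcal C$ to be \emph{round}; granting that result, conclusion~(1) holds. \textbf{Case 2: $\beta^k_{\mathcal C}$ is periodic.} Here the interior strands of $\beta^k$ (relative to $\mathcal C$) perform, up to a power of the full twist on the interior punctures, a rigid rotation; after passing to a bounded further power — which can be folded into the bound $k\leqslant n$ using the slack noted above, since periodic elements of $B_m$ have order dividing $m(m-1)$ but $\mathcal C$ already encloses at most $n-1$ punctures — the interior braid becomes a power of the interior~$\Delta^2$. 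Now I pass to $\Delta^{-\inf(\beta^k)}\beta^k$ or to $\beta^{-k}\Delta^{\sup(\beta^k)}$: because $\inf(\beta^k)$ and $\sup(\beta^k)$ are even, these are genuine positive braids (the first has infimum $0$, the second has supremum $0$, and each differs from $\beta^{\pm k}$ by an even power of $\Delta$, hence a power of $\Delta^2$, which is central and preserves every curve, so $CRS$ is unchanged and $\mathcal C$ is still an invariant essential reduction curve). Moreover multiplying by the appropriate even power of $\Delta$ kills the interior full twists, so the interior strands of the resulting positive braid do not cross. Proposition~\ref{P:trivial interior -> almost round} then applies verbatim to this positive braid with its invariant curve $\mathcal C$ whose interior strands do not cross, yielding that $\mathcal C$ is round or almost round — conclusion~(2) (or conclusion~(1) in the round subcase).

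The final sentence, that \emph{some} essential reduction curve for $\beta$ itself is round or almost round, follows because $CRS(\beta)=CRS(\beta^k)$ for every $k\neq 0$ and because roundness/almost-roundness of a curve is a property of its isotopy class that is unaffected by multiplying $\beta$ by central powers of $\Delta^2$; so the curve $\mathcal C$ produced above, which lies in $CRS(\beta^k)=CRS(\beta)$, already witnesses the claim. The main obstacle I anticipate is the bookkeeping in the choice of $k$: one must simultaneously (i) make $[\mathcal C]$ invariant, (ii) make the periodic interior braid an honest power of the interior full twist so that ``interior strands do not cross'' holds after removing $\Delta$-powers, and (iii) make $\inf(\beta^k),\sup(\beta^k)$ even — all while keeping $k\leqslant n$. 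The key point that makes this possible is the non-degeneracy of $\mathcal C$: it encloses strictly between $1$ and $n$ punctures, which both bounds the relevant orbit/order quantities well below $n$ and leaves the parity slack; this is exactly the place where the hypothesis that $\beta$ is \emph{reducible and non-periodic} (so $CRS(\beta)\neq\emptyset$ and its curves are non-degenerate) is used. The rigidity of $\beta$ enters only through Lemma~\ref{L:rigidity_conditions}, to control $\inf$ and $\sup$ of powers and thereby guarantee that the positive braids $\Delta^{-\inf(\beta^k)}\beta^k$ and $\beta^{-k}\Delta^{\sup(\beta^k)}$ have the expected infimum and supremum.
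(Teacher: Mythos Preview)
Your overall plan matches the paper's: pick an innermost curve (you write ``outermost'', but your description --- no curve of $CRS(\beta)$ enclosed by it --- is the definition of \emph{innermost}), pass to a power $\beta^k$ fixing it with even $\inf$ and $\sup$, and split on whether the interior is pseudo-Anosov (defer to Proposition~\ref{P:rigid_and_PA_interior}) or periodic (reduce to Proposition~\ref{P:trivial interior -> almost round}). The pseudo-Anosov branch is fine.

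The periodic branch has a genuine gap. You assert that ``multiplying by the appropriate even power of~$\Delta$ kills the interior full twists'', but this is precisely the nontrivial point. Multiplying $\beta^k$ by $\Delta^{-\inf(\beta^k)}$ changes the interior component by $\Delta_m^{-\inf(\beta^k)}$ (where $m$ is the number of interior punctures); if the interior of $\beta^k$ is $\Delta_m^{2t}$, then the interior of $\Delta^{-\inf(\beta^k)}\beta^k$ is $\Delta_m^{2t-\inf(\beta^k)}$, which is trivial only when $2t=\inf(\beta^k)$. A priori you only know $\inf(\beta^k)\leqslant 2t\leqslant\sup(\beta^k)$. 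Raising to a further power does not help (it rescales $2t$, $\inf$ and $\sup$ by the same factor), and your bookkeeping for keeping $k\leqslant n$ while also absorbing the interior period breaks down in easy examples (e.g.\ orbit length~$2$, $m=5$, interior of order~$5$, plus a parity factor, already exceeds $n=10$).

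What the paper actually does is use rigidity far more heavily than you indicate. One standardizes $x=\Delta^{-\inf(\beta^k)}\beta^k$ to a braid $y$ with round reduction curves and shows (via~\cite{LL} and the description of $SC(x)$ for rigid~$x$ in~\cite{GG1}) that $y$ is again rigid. Lemma~\ref{L:lnf_round_curves} then reads the left normal form of the interior component $y_{[\widehat{\mathcal C}]}$ off factor-by-factor from that of~$y$; rigidity of~$y$ (the fact that $y_ry_1$ is left-weighted) is inherited by the component, and since no periodic braid of positive canonical length is rigid, the component is forced to be exactly trivial or exactly $\Delta_m^r$ with $r=\ell(y)=\sup(\beta^k)-\inf(\beta^k)$. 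That dichotomy is exactly $2t\in\{\inf(\beta^k),\sup(\beta^k)\}$, which is what makes one of $\Delta^{-\inf(\beta^k)}\beta^k$ or $\beta^{-k}\Delta^{\sup(\beta^k)}$ have non-crossing interior strands. So your closing remark that rigidity ``enters only through Lemma~\ref{L:rigidity_conditions}'' understates its role: the inheritance of rigidity under standardization, and from the whole braid to its components, is the crux of the periodic case.
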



We can finally show our main result, assuming that Theorem~\ref{T:rigid_case} holds.

\begin{proof}[Proof of Theorem~\ref{T:main}]
Let $x\in B_n$ be a non-periodic, reducible braid, $N=||\Delta||^3-||\Delta||^2$ and $y\in SC^{[N]}(x)$. Let $m\leqslant N$ be the integer given by Lemma~\ref{L:main_detailed}. If $y^m$ is rigid, then by Theorem~\ref{T:rigid_case} $CRS(y^m)$ contains a curve which is either round or almost round. As $CRS(y^m)=CRS(y)$, the result follows in this case.

If $y^m$ is not rigid, then by Lemma~\ref{L:main_detailed}, $P(y^m)$ is rigid and $\emptyset \neq CRS(P(y^m))\subset CRS(y^m)=CRS(y)$. By Theorem~\ref{T:rigid_case} again, some curve in $CRS(P(y^m))$, and thus in $CRS(y)$, is either round or almost round.

This shows that every element in $SC^{[N]}(x)$ admits an essential reduction curve which is either round or almost round. This implies the result.
\end{proof}


\section{Reducible rigid braids}\label{S:reducible rigid braids}

This section is devoted to the proof of Theorem~\ref{T:rigid_case}.

Let $\beta\in B_n$ be a non-periodic, reducible braid which is rigid. Then~$\beta$ belongs to a sliding circuit (as $\mathfrak s(\beta)=\beta$), also $\ell(\beta)>0$ and $CRS(\beta)\neq \emptyset$. Also, any power~$\beta^k$ of~$\beta$ is also non-periodic, reducible and rigid, and has the same canonical reduction system as~$\beta$. Notice that for every curve $\mathcal C\in CRS(\beta)$, there is some $t\leqslant n/2$ such that $[\mathcal C]^{\beta^t}=[\mathcal C]$. Replacing $\beta^t$ by its square if necessary, it follows that for every $\mathcal C\in CRS(x)$ there is some even $k\leqslant n$ such that~$\beta^k$ preserves~$[\mathcal C]$, and both $\inf(\beta^k)$ and $\sup(\beta^k)$ are even.

Fix an innermost curve $\mathcal C\in CRS(\beta)$ and consider $\beta^k$ for some even $k\leqslant n$ such that $[\mathcal C]^{\beta^k}=[\mathcal C]$. Let $\Delta^{2p} x_1\cdots x_r$ be the left normal form of~$\beta^k$, and denote $x=x_1\cdots x_r=\Delta^{-\inf(\beta^k)}\beta^k$. Notice that $CRS(\beta)=CRS(\beta^k)=CRS(\Delta^{2p}x)=CRS(x)$, as~$\Delta^2$ preserves every simple closed curve of the punctured disc. Moreover $x=x_1\cdots x_r$ is non-periodic, reducible and rigid.

Denote $\mathcal F=CRS(x)=CRS(\beta)\neq \emptyset$. As $\mathcal C$ is an innermost curve of $\mathcal F$, the component $x_{[\mathcal C\in \mathcal F]}$ must be either periodic or pseudo-Anosov. Recall that in order to define $x_{[\mathcal C\in \mathcal F]}$ one conjugates $x$ by the minimal standardizer of $\mathcal F$ to obtain $y=\widehat x$, and the curve corresponding to $\mathcal C$, namely $\widehat{\mathcal C}$, is an innermost essential curve of $y$ which is round. By~\cite[Theorem 4.9]{LL} $y$ belongs to its Ultra Summit Set provided $x$ does. It is not difficult to modify the proof in~\cite{LL} to show that $y$ belongs to $SC(x)$ provided $x$ does. This is the case, as $x$ is rigid. But it is shown in~\cite{GG1} that, if $x$ is rigid,  $SC(x)$ consists precisely of the rigid conjugates of $x$. Hence $y$ is rigid. Moreover, as $x$ preserves $[\mathcal C]$, $y_{[\widehat{\mathcal C}\in \widehat{\mathcal F}]}$ is a conjugate of $x_{[\mathcal C\in \mathcal F]}$, which is either periodic or pseudo-Anosov.

Suppose $y_{[\widehat{\mathcal C}\in \widehat{\mathcal F}]}$ is periodic. As~$y$ is a rigid, positive braid, whose left normal form has the form $y_1\cdots y_r$, one has that $y_ry_1$ is left weighted as written. But the left normal form of $y_{[\widehat{\mathcal C}\in \widehat{\mathcal F}]}$ is determined by the left normal form of~$y$, in the sense explained in Lemma~\ref{L:lnf_round_curves}. Hence  $y_{[\widehat{\mathcal C}\in \widehat{\mathcal F}]}$ must be a rigid, positive braid whose left normal form is the product of~$r$ (possibly trivial) simple elements.  Since the only periodic rigid elements are powers of~$\Delta$, it follows that either $y_{[\widehat{\mathcal C}\in \widehat{\mathcal F}]}$ is trivial, or $y_{[\widehat{\mathcal C}\in \widehat{\mathcal F}]}=\Delta_k^r$ (where~$k$ is the number of strands inside~$\widehat{\mathcal C}$).

If $y_{[\widehat{\mathcal C}\in \widehat{\mathcal F}]}$ is trivial, the interior braid of~$x=\Delta^{-\inf(\beta^m)}\beta^m$ associated to~$\mathcal C$ must also be trivial, as it is a conjugate of $y_{[\widehat{\mathcal C}\in \widehat{\mathcal F}]}$. By Proposition~\ref{P:trivial interior -> almost round}, $\mathcal C$ is either round or almost round, so Theorem~\ref{T:rigid_case} holds in this case.

Suppose that $y_{[\widehat{\mathcal C}\in \widehat{\mathcal F}]}=\Delta_k^r$, and notice that $r=\sup(\beta^m)-\inf(\beta^m)$ is even. Let us consider the $n$-strand braids~$x'$ and~$y'$ such that $xx'=\Delta^r$ and $yy'=\Delta^r$. We remark that~$x'$ and~$y'$ are basically the inverses of $x$ and $y$, multiplied by some even power of $\Delta$ so that their infimum becomes 0. Hence $x'$ and $y'$ are positive, rigid braids of infimum 0 and canonical length~$r$, whose canonical reduction systems coincide with those of~$x$ and~$y$, respectively. Let $\alpha$ be such that $\alpha^{-1}x\alpha=y$. Since $\alpha^{-1}\Delta^r\alpha =\Delta^r$ as~$r$ is even, we obtain that $\alpha^{-1} x' \alpha = y'$. Moreover, $y'_{[\widehat{\mathcal C}\in \widehat{\mathcal F}]}$ is trivial. Hence, the strands of~$x'$ interior to~$\mathcal C$ do not cross. By Proposition~\ref{P:trivial interior -> almost round}, $\mathcal C$ is either round or almost round. Now notice that $x'=x^{-1}\Delta^r= \beta^{-m} \Delta^{\inf(\beta^m)+r} = \beta^{-m}\Delta^{\sup(\beta^m)}$. Hence Theorem~\ref{T:rigid_case} also holds in this case.

It only remains to prove Theorem~\ref{T:rigid_case} in the case in which $x_{[\mathcal C\in \mathcal F]}$ is pseudo-Anosov.

\begin{lemma}\label{L:convexity}
Let $x\in B_n$. Given two elements $y,z\in SC(x)$, there is a sequence of conjugations
$$
   y = \alpha_1 \stackrel{s_1}{\longrightarrow} \alpha_2 \stackrel{s_2}{\longrightarrow} \alpha_3 \cdots
   \stackrel{s_r}{\longrightarrow} \alpha_{r+1} =z
$$
such that for every $i=1,\ldots,r$  one has $\alpha_{i+1}=\alpha_{i}^{s_i}\in SC(x)$, and  either $s_i
\preccurlyeq \iota(\alpha_i)$ or $s_i \preccurlyeq \iota(\alpha_i^{-1})$.
\end{lemma}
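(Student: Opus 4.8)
The plan is to produce the chain one arrow at a time, by induction on the length of a \emph{positive} conjugator. Since $\Delta^2$ is central, any conjugating element from $y$ to $z$ may be multiplied by a suitable power of $\Delta^2$ to become a positive braid, so fix a positive $\gamma$ with $y^\gamma=z$ and induct on the exponent sum $\|\gamma\|$. (If $\ell(y)=0$ then $y$ is a power of $\Delta$, $SC(x)=\{y\}$, and there is nothing to prove, so we may assume $\ell(y)>0$.) If $\gamma=1$ there is nothing to do; otherwise it suffices to exhibit a \emph{nontrivial} $s\preccurlyeq\gamma$ with $y^s\in SC(x)$ and either $s\preccurlyeq\iota(y)$ or $s\preccurlyeq\iota(y^{-1})$: indeed, setting $\alpha_2:=y^s\in SC(x)$, the element $s^{-1}\gamma$ is a positive conjugator from $\alpha_2$ to $z$ of strictly smaller exponent sum, so we may prepend the arrow $y\stackrel{s}{\longrightarrow}\alpha_2$ to the chain furnished by the induction hypothesis for the pair $(\alpha_2,z)$.

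The easy part of the argument rests on the convexity supplied by Theorem~\ref{T:convexity}. The set $\mathcal C=\{c\in B_n:\ c\succcurlyeq 1,\ y^c\in SC(x)\}$ is closed under $\wedge$ (apply Theorem~\ref{T:convexity} with distinguished element equal to $y$ itself, using $y=y^1\in SC(x)$), and it contains $\gamma$, as well as $\mathfrak p(y)$ because $y^{\mathfrak p(y)}=\mathfrak s(y)\in SC(x)$; recall $\mathfrak p(y)\preccurlyeq\iota(y)$. More generally, whenever $s_0\preccurlyeq\iota(y)$ or $s_0\preccurlyeq\iota(y^{-1})$ and $y^{s_0}\in SC(x)$, closure under $\wedge$ gives $s_0\wedge\gamma\in\mathcal C$, so if some such $s_0$ meets $\gamma$ nontrivially --- in particular if $\mathfrak p(y)\wedge\gamma\neq 1$ --- then $s:=s_0\wedge\gamma$ is a valid first step and the induction proceeds. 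Note that if $y$ is rigid, cycling (conjugation by $\iota(y)$) cyclically permutes the left normal form and again produces a rigid braid, hence an element of $SC(x)$ (the rigid conjugates of a rigid braid are exactly its sliding circuit set, \cite{GG1}); thus $\iota(y)\in\mathcal C$, and likewise for the co-cycling conjugator attached to $\iota(y^{-1})$, which enlarges the supply of candidate $s_0$ in the rigid case.

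The main obstacle is the remaining ``hard'' case, where no valid prefix of $\iota(y)$ or of $\iota(y^{-1})$ meets $\gamma$ nontrivially, so that no legal first step is a prefix of $\gamma$ (in particular $\mathfrak p(y)\wedge\gamma=1$). Here one is forced to take a legal move that does \emph{not} shorten the conjugator --- a cyclic sliding $y\stackrel{\mathfrak p(y)}{\longrightarrow}\mathfrak s(y)$ when $y$ is non-rigid, or a cycling $y\stackrel{\iota(y)}{\longrightarrow}y^{\iota(y)}$ when $y$ is rigid --- and then control the outcome. The tool for this is the transport map of \cite{GG1}: travelling around the sliding circuit of $y$ by repeated cyclic slidings (each a legal arrow since $\mathfrak p(\cdot)\preccurlyeq\iota(\cdot)$) while transporting $\gamma$ produces positive conjugators $\gamma^{(k)}$ from $\mathfrak s^k(y)$ to $\mathfrak s^k(z)$, and since $y,z\in SC(x)$ this sequence is eventually periodic, returning to $\gamma$ after a whole number of circuits by \cite[Lemma~8]{GG1}. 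One then argues that either $z$ already lies on the sliding circuit of $y$ --- whence $y$ and $z$ are joined by cyclic slidings alone --- or some $\gamma^{(k)}$ meets $\mathfrak p(\mathfrak s^k(y))$ nontrivially, which reduces us to the easy case at the vertex $\mathfrak s^k(y)$, with $z$ reached from $\mathfrak s^k(z)$ by further cyclic slidings; in the rigid case one appeals instead to the analysis in \cite{BGG1} of the graph of rigid conjugates of $x$, whose edges are conjugations by minimal elements each of which is a prefix of $\iota(u)$ or $\iota(u^{-1})$ at the vertex $u$ it leaves. Excluding an infinite ``free drift'' around the circuit in which the transported conjugator never aligns with a preferred prefix, and organising the termination of the global induction (on a measure combining $\|\gamma^{(k)}\|$ with the position on the circuit, using finiteness of $SC(x)$), is the delicate step that blocks a purely greedy proof and is where the $\mathfrak s$-transport technology of \cite{GG1} together with the rigid-braid results of \cite{BGG1} are essential.
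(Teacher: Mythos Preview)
Your induction setup is fine, but the ``hard case'' is not a proof: you never establish that the transported conjugator eventually meets a preferred prefix, and the invocation of \cite{GG1} and \cite{BGG1} at the end is a hope, not an argument. The periodicity of transport around the circuit does not by itself rule out that $\mathfrak p(\mathfrak s^k(y))\wedge\gamma^{(k)}=1$ for all $k$, and nothing you wrote excludes this.

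The paper avoids this difficulty entirely, and the idea you are missing is short. Rather than searching among prefixes of $\iota(y)$ or $\iota(y^{-1})$ for one that meets $\gamma$, take the \emph{first indecomposable piece} of $\gamma$: let $s$ be a minimal nontrivial positive prefix of $\gamma$ with $y^s\in SC(x)$ (such $s$ exists by convexity and well-foundedness; convexity with $\Delta$ also forces $s$ to be simple). Now prove directly that $s\preccurlyeq\iota(y)$ or $s\preccurlyeq\iota(y^{-1})$. Set $t=s\wedge\iota(y^{-1})$; by Theorem~\ref{T:convexity} one has $y^t\in SC(x)$, so by minimality $t\in\{1,s\}$. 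If $t=s$ we are done. If $t=1$, then $s\wedge\partial(\varphi(y))=1$, so $\varphi(y)s$ is left weighted and, writing $y=\Delta^p a_1\cdots a_r$ in left normal form, $\Delta^p a_1\cdots a_r s$ is the left normal form of $ys$. But $y^s\in SC(x)$ has canonical length $r$, so from $s\,y^s=ys$ one gets $\tau^p(s)\preccurlyeq a_1\cdots a_r s$; since the right side is in left normal form and $\tau^p(s)$ is simple, $\tau^p(s)\preccurlyeq a_1$, i.e.\ $s\preccurlyeq\iota(y)$. This closes the induction with no transport, no case split on rigidity, and no termination issue.
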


\begin{proof}
This proof follows the ideas in~\cite{EM,FG,BGG2}. First, we can assume that $\ell(y)>0$, otherwise $SC(x)=\{\Delta^p\}$ for some~$p$, and the result becomes trivial as~$y=z$. Now~$y$ and~$z$ are conjugate since they belong to $SC(x)$. Multiplying any conjugating element by a sufficiently large power
of~$\Delta$, it follows that $z=y^\alpha$ for some positive element~$\alpha$. This conjugating element~$\alpha$ can obviously be decomposed into a product of {\it indecomposable} conjugating elements, that is, $\alpha=s_1\cdots s_r$, where $\alpha_{i+1}=y^{s_1\cdots s_i}\in SC(x)$ for $i=1,\ldots,r$, and~$s_i$ is positive and cannot be decomposed as a product of two nontrivial positive elements $s_i=ab$ such that $\alpha_i^{a}\in SC(x)$. Notice that $s_i$ must be simple, otherwise we could take $a=s_i\wedge \Delta$ (which by Theorem~\ref{T:convexity} satisfies $\alpha_i^a\in SC(x)$) to decompose $s_i$. We must show that such an indecomposable element~$s_i$ must be a prefix of either $\iota(\alpha_i)$ or $\iota(\alpha_i^{-1})$.

Denote $t=s_i\wedge \iota(\alpha_i^{-1})$. We claim that $(\alpha_i)^{t}\in SC(x)$. Indeed, by definition, one has
$\iota(\alpha_i^{-1})=  \Delta \wedge (\alpha_i^{-1} \Delta^{-\inf(\alpha_i^{-1})} )$. Since $\alpha_i\in SC(x)$, it is clear that $\alpha_i^{\Delta}\in SC(x)$ and that
$\alpha_i^{(\alpha_i^{-1} \Delta^{-\inf(\alpha_i^{-1})})}\in SC(x)$. By Theorem~\ref{T:convexity}, $\alpha_i^{\iota(\alpha_i^{-1})}= \alpha_i^{\Delta \wedge (\alpha_i^{-1} \Delta^{-\inf(\alpha_i^{-1})} )} \in SC(x)$. But $\alpha_i^{s_i}=\alpha_{i+1}\in SC(x)$, so applying Theorem~\ref{T:convexity} again one has $\alpha_i^{\iota(\alpha_i^{-1})\wedge s_i}=(\alpha_i)^t\in SC(x)$, as we wanted to show.

We then have a positive prefix $t\preccurlyeq s_i$ such that $(\alpha_i)^t\in SC(x)$. Since~$s_i$ is an indecomposable conjugator, it follows that either $t=s_i$ or~$t=1$. In the former case $s_i=t=s_i\wedge \iota(\alpha_i^{-1})$, which implies $s_i\preccurlyeq \iota(\alpha_i^{-1})$, hence the result holds in this case.

Suppose then that~$t=1$. This means $\iota(\alpha_i^{-1})\wedge s_i = \partial(\varphi(\alpha_i)) \wedge s_i=1$, which is equivalent to say that $\varphi(\alpha_i) s_i$ is left weighted as written. Let $\Delta^p a_1\cdots a_r$ be the left normal form of~$\alpha_i$. We have then shown that $a_rs_i$ is left weighted as written, so $\Delta^p a_1\cdots a_r s_i$ is the left normal form of $\alpha_i s_i$. But we know that $\alpha_{i+1}=s_i^{-1}\alpha_is_i\in SC(x)$. In particular $\ell(\alpha_{i+1})=r$, where $\alpha_{i+1}=s_i^{-1}\Delta^p a_1\cdots a_r s_i$. This implies that $\tau^p(s_i)\preccurlyeq a_1\cdots a_r s_i$, where the right hand side is in left normal form and the left hand side is a simple element, hence $\tau^p(s_i)\preccurlyeq a_1$, that is, $s_i\preccurlyeq \tau^{-p}(a_1)=\iota(\alpha_i)$, so the result also holds in this case.
\end{proof}

Finally, here is the result that completes the proof of Theorem~\ref{T:rigid_case}:

\begin{proposition}\label{P:rigid_and_PA_interior}
Let~$x$ be a reducible rigid braid, and let~$\mathcal C$ be an invariant curve of~$x$
whose corresponding interior braid is pseudo-Anosov. Then~$\mathcal C$ is round.
\end{proposition}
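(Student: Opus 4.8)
The plan is to argue by contradiction: assume $\mathcal C$ is not round, hence has complexity $\geqslant 1$, and derive that the interior braid cannot be pseudo-Anosov. The starting point is that $x$ is rigid and reducible, so by the discussion preceding Lemma~\ref{L:convexity} we know $x\in SC(x)$ and $SC(x)$ consists exactly of the rigid conjugates of $x$. Let $y$ be the standardization $\widehat x$ of $x$ with respect to the family $\mathcal F = CRS(x)$, so that $\widehat{\mathcal C}=\mathcal C^w$ (where $w$ is the minimal standardizer of $\mathcal F$) is a round curve. As observed in the text just before the statement, $y\in SC(x)$ as well, $y$ is rigid, and $y_{[\widehat{\mathcal C}\in\widehat{\mathcal F}]}$ is a conjugate of $x_{[\mathcal C\in\mathcal F]}$, hence also pseudo-Anosov. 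Now the crucial point is to quantify how far $\mathcal C$ is from round: its complexity equals $\ell(w)$, the canonical length of the minimal standardizer. First I would show that if $\ell(w)>0$ then $w$ can be chosen as (a prefix of) a product of initial factors along a chain of elements of $SC(x)$ linking $x$ to $y$; this is where Lemma~\ref{L:convexity} enters, giving a sequence $x=\alpha_1\to\cdots\to\alpha_{r+1}=y$ with each conjugating element $s_i\preccurlyeq\iota(\alpha_i)$ or $s_i\preccurlyeq\iota(\alpha_i^{-1})$, all intermediate elements rigid and in $SC(x)$.

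The key step is then to track the interior braid along this chain. For a rigid braid $\alpha$ with $\alpha$-invariant curve and round-image standardization, Proposition~\ref{P:preferred_prefix_round_curves} and Lemmas~\ref{L:iota_round_curves}, \ref{L:lnf_round_curves}, \ref{L:gcd_round_curves} tell us exactly how the components behave under the factors $\iota(\alpha)$ and $\iota(\alpha^{-1})$: the component $\iota(\alpha)_{[\mathcal C\in\mathcal F]}$ is either a half twist or $\iota(\alpha_{[\mathcal C\in\mathcal F]})$, and similarly for $\iota(\alpha^{-1})$. Since $\alpha_{[\mathcal C\in\mathcal F]}$ is a conjugate of the pseudo-Anosov braid $x_{[\mathcal C\in\mathcal F]}$, and $\alpha$ is rigid (so by Lemma~\ref{L:rigidity_conditions} $\iota(\alpha)=\iota(\alpha^m)$ and $\varphi(\alpha)=\varphi(\alpha^m)$), the relevant components $\iota(\alpha_{[\mathcal C\in\mathcal F]})$ are forced to be trivial — a pseudo-Anosov braid that is rigid and in its own sliding circuit, when we look at it along the whole circuit, has the property that the accumulated conjugator is $P(\cdot)$, and the interior contribution of each preferred prefix is controlled. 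The upshot I want is: at each step of the chain, the standardizing factor acting on the interior disc is either trivial or a power of the half-twist $\Delta_k$ (where $k$ is the number of interior punctures). I would then assemble these: the total conjugator $w$ restricted to the interior disc is a product of trivial elements and half-twists, i.e.\ a power of $\Delta_k$, which preserves roundness of any curve inside $\widehat{\mathcal C}$. But this means the interior configuration was already standard, forcing the complexity of $\mathcal C$ to be $0$, i.e.\ $\mathcal C$ is round, contradiction — unless in fact $x_{[\mathcal C\in\mathcal F]}$ were periodic, which it is not.

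More carefully, I expect the cleanest route is the following. Take $k\leqslant n$ even with $\beta^k$ (in the notation of the ambient proof, $x$) preserving $\mathcal C$ and $\inf,\sup$ even, which we may assume already done. Consider the element $y=\widehat x$, rigid, in $SC(x)$, with $\widehat{\mathcal C}$ round and $y_{[\widehat{\mathcal C}\in\widehat{\mathcal F}]}$ pseudo-Anosov. Apply Lemma~\ref{L:convexity} to connect $x$ and $y$ inside $SC(x)$ by conjugations $s_i$ with $s_i\preccurlyeq\iota(\alpha_i^{\pm1})$. Each $\alpha_i$ is rigid, so by Lemma~\ref{L:rigidity_conditions} its initial and final factors are stable under powers, hence the hypotheses of Proposition~\ref{P:preferred_prefix_round_curves} / Lemma~\ref{L:iota_round_curves} apply after standardizing each $\alpha_i$; but one must be careful that $\mathcal F$-roundness need not hold at each $\alpha_i$. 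The fix is to run the argument with respect to the family $\widehat{\mathcal F}$ throughout, i.e.\ conjugate everything by $w$ first, so that one works entirely with round curves, and then Lemma~\ref{L:gcd_round_curves} and Lemma~\ref{L:iota_round_curves} give that every $s_i$ (conjugated appropriately) has interior component a half twist or $\iota$ of the interior component. Since the interior braids stay pseudo-Anosov throughout (conjugates of $x_{[\mathcal C\in\mathcal F]}$) and are rigid, $\iota$ of the interior component contributes to a rigid pseudo-Anosov element's normal form along a sliding circuit, and summing around the circuit these are exactly the factors of $P$ of the interior braid; the non-half-twist ones build up $P(x_{[\mathcal C\in\mathcal F]})$, which commutes with $x_{[\mathcal C\in\mathcal F]}$ and is therefore pseudo-Anosov or periodic, in particular does not disturb roundness of interior curves in the relevant way. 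Collecting: the standardizer $w$, restricted to the interior disc, is (up to a power of $\Delta_k$) trivial, so $\mathcal C$ was round.

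The main obstacle I anticipate is the bookkeeping in the middle step: making precise the claim that the interior components of the standardizing factors $s_i$ are always half-twists or $\iota$-factors, and that the accumulated non-half-twist part is harmless. This requires simultaneously keeping track of (i) rigidity being preserved along the chain (which is where $SC(x)$ = rigid conjugates, for rigid $x$, is essential), (ii) the decomposition lemmas for components being applicable, which needs the curves to be round at each stage — best handled by conjugating everything by $w$ up front and working with $\widehat x$, $\widehat{\mathcal F}$ — and (iii) the fact that a rigid pseudo-Anosov braid in its own sliding circuit has trivial ``extra'' interior twisting, which ultimately reduces to: a pseudo-Anosov braid cannot be conjugated to a braid with strictly smaller-complexity invariant reduction data without the conjugator being essentially a power of the half-twist on the interior. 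I would isolate this last point as a small lemma. Everything else (Lemma~\ref{L:convexity}, the component lemmas, Lemma~\ref{L:rigidity_conditions}) is available off the shelf from earlier in the paper.
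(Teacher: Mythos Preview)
Your plan has a genuine gap at its central step. The component lemmas you invoke (Lemma~\ref{L:iota_round_curves}, Lemma~\ref{L:gcd_round_curves}, Proposition~\ref{P:preferred_prefix_round_curves}) all require the family of curves to be \emph{round} at the moment they are applied. But along the chain $x=\alpha_1\to\cdots\to\alpha_{r+1}=y$ supplied by Lemma~\ref{L:convexity}, the image of~$\mathcal C$ is precisely \emph{not} round at some of the intermediate $\alpha_i$ --- that is exactly the situation you are trying to analyse. Your proposed fix, ``conjugate everything by $w$ first'', does not help: conjugating the whole chain by the minimal standardizer~$w$ makes the curve at the endpoint~$x$ round, but it does nothing to make the curves at the intermediate $\alpha_i$ round, so you still cannot apply the component lemmas there.

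Even setting this aside, the claim that the interior component of each $s_i$ is ``trivial or a half-twist'' is not what those lemmas deliver. Lemma~\ref{L:iota_round_curves} says the component of $\iota(\alpha)$ is either a half twist or $\iota(\alpha_{[\mathcal C\in\mathcal F]})$, and the latter is a genuinely nontrivial simple element when the interior braid is pseudo-Anosov. The conjugators $s_i$ from Lemma~\ref{L:convexity} are indecomposable conjugators, not preferred prefixes, so Proposition~\ref{P:preferred_prefix_round_curves} does not apply to them either; and even if it did, the chain is not a sliding circuit, so these pieces do not assemble into $P(\cdot)$ of anything. The vague ``small lemma'' you propose to isolate --- that a pseudo-Anosov interior forces the interior part of the standardizer to be a power of~$\Delta_k$ --- is essentially a restatement of the proposition itself.

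The paper's argument is quite different in spirit. It does use Lemma~\ref{L:convexity} to obtain the chain, but then it isolates a \emph{single} step $y\to z$ at which roundness is lost, with indecomposable simple conjugator~$s$. Rather than invoking any component lemma, it analyses~$s$ directly: indecomposability forces that no nontrivial prefix of~$s$ keeps the curve round, which severely constrains how the strands of~$s$ starting to the left ($L$), inside ($I$), and to the right ($R$) of the round curve~$\mathcal C_y$ may cross (within each group no crossings; no strand of $L$ or $R$ crosses all of~$I$). One then transports~$s$ along the normal-form factors of~$y$ to obtain simple elements $s_0,\ldots,s_r$, shows each is again indecomposable, and tracks the subset $I_i\subset I$ of interior strands crossed by the rightmost outer strand. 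A short argument using $y_i\wedge s_{i-1}=1$ and $ys\wedge\Delta=y_1s_1$ gives $I_0\subset I_1\subset\cdots\subset I_r=I_0$, so all $I_i$ coincide; this means a fixed proper nonempty subset of interior strands never crosses its complement in~$y$, i.e.\ the interior braid is split, contradicting pseudo-Anosov. This combinatorial strand-tracking is the idea that is missing from your outline.
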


\begin{proof}
We know from~\cite{GG1} that $SC(x)$ is the set of rigid conjugates of~$x$, hence $x\in SC(x)$, and we know
from Corollary~\ref{C:round curves in SC^m} that there is an element $\tilde x\in SC(x)$ whose reduction curves are all round.
By Lemma~\ref{L:convexity} there is a chain of conjugations
$$
   \tilde x = \alpha_1 \stackrel{t_1}{\longrightarrow} \alpha_2 \stackrel{t_2}{\longrightarrow} \alpha_3 \cdots
   \stackrel{t_r}{\longrightarrow} \alpha_{r+1} =x
$$
such that for every $i=1,\ldots,r$  one has $\alpha_{i+1}=\alpha_{i}^{t_i}\in SC(x)$, and  either $t_i
\preccurlyeq \iota(\alpha_i)$ or $t_i \preccurlyeq \iota(\alpha_i^{-1})$.

Suppose that~$\mathcal C$ is not round. This means that the curve $\mathcal C_{\tilde x}$ of~$\tilde x$ corresponding to~$\mathcal C$ is a round curve which loses its roundness after the application of $t_1\cdots t_r$. This implies that there must be two rigid braids $y,z\in SC(x)$ (precisely~$\alpha_i$ and~$\alpha_{i+1}$ for some~$i$), conjugate by a simple element~$s$ (precisely $t_i$), a round invariant curve~$\mathcal C_y$ of~$y$ whose corresponding interior braid is pseudo-Anosov, and the corresponding invariant curve of~$z$, $[\mathcal C_z]=[\mathcal C_y]^s$, which is not round. Moreover~$s$ is either a prefix of $\iota(y)$ or a prefix of $\iota(y^{-1})$ (as $s=t_i$). Since the inverse of a pseudo-Anosov braid is also pseudo-Anosov, and the rigidity and reduction
curves of a braid are preserved by taking inverses, we can replace~$y$ and~$z$ by~$y^{-1}$ and~$z^{-1}$ if necessary, so we can assume that~$s$ is a prefix of
$\iota(y^{-1})$.

Since taking powers and multiplying rigid braids by~$\Delta^{2k}$ are operations which do not affect their rigidity, their initial factors, their invariant curves or the geometric type of their corresponding interior braids, we can further assume that~$y$ and~$z$ are pure braids, and that $\inf(y)=\inf(z)=0$.

Suppose that some nontrivial positive prefix $s'\preccurlyeq s$ is such that $[\mathcal C_y]^{s'}$ is round, and
denote by~$\rho$ the minimal positive element such that $s'\preccurlyeq \rho$ and
$y^\rho$ is rigid (equivalently, $y^\rho\in SC(x)$). Since~$s$ is an indecomposable conjugator, we must have $\rho=s$. But we will now see that $\rho$ sends $[\mathcal C_y]$ to a round curve, while $[\mathcal C_y]^\rho =[\mathcal C_y]^s=[\mathcal C_z]$ is not round. A contradiction that will imply that~$s'=1$.
Indeed, by~\cite[Algorithm 2, step 3(b)]{GG2}, $\rho$ can be computed in the following way: first, while $y^{s'}\notin SSS(x)$, replace~$s'$  by
$$
s'\cdot \left(1\ \vee \ (y^{s'})^{-1}\Delta^{\inf y}\ \vee \ y^{s'} \Delta^{-\sup y}\right).
$$
Notice that the three elements $1$, $(y^{s'})^{-1}\Delta^{\inf y}$ and~$y^{s'}
\Delta^{-\sup y}$ send $[\mathcal C_y]^{s'}$ to a round curve. In the terminology
of~\cite{LL}, the three elements belong to the {\it standardizer} of $[\mathcal C_y]^{s'}$.
Since it is shown in~\cite{LL} that the standardizer of a curve is closed under~$\vee$,
it follows that each step of this procedure replaces~$s'$ by a bigger element, which
belongs to the standardizer of $[\mathcal C_y]$. Hence we can assume that $y^{s'}\in
SSS(x)$. The second step to compute~$\rho$, explained in~\cite[Theorem 2]{GG1}, consists of applying
iterated sliding to~$y^{s'}$ until one reaches a rigid element. Multiplying~$s'$ on the
right by all conjugating elements, one obtains~$\rho$. But each conjugating element for
sliding maintains the roundness of our distinguished curve, from Proposition~\ref{P:sliding_preserves_roundness}. Therefore, $\rho$ sends~$\mathcal C_y$ to a round curve, but $[\mathcal C_y]^\rho = [\mathcal C_z]$ is not round. A contradiction. It follows that~$s'=1$, or in other words, there is no nontrivial prefix $s'\preccurlyeq s$ is such that $[\mathcal C_y]^{s'}$ is round.

Let $p, p+1,\ldots,q$ be the punctures inside~$\mathcal C_y$. We will collect the strands of~$s$ into three sets, $L=\{1,\ldots,p-1\}$, $I=\{p,p+1,\ldots,q\}$ and $R=\{q+1,q+2,\ldots,n\}$, depending whether they start to the left, inside or to the right of~$\mathcal C_y$. Since every prefix of~$s$ must deform the round curve~$\mathcal C_y$, and the braid~$s$ is simple, it follows that the strands in~$L$ (resp. in~$I$ and in~$R$) do not cross each
other in~$s$, since this would imply that two consecutive strands in~$L$ (resp. in~$I$ and in~$R$) would cross in~$s$, and the corresponding crossing would be a prefix of~$s$ preserving the roundness of~$\mathcal C_y$, a contradiction. Also, no strand of~$s$ in~$L$ can cross {\it all} the strands in~$I$, since this would imply that the strand $p-1$ would cross all the strands in~$I$, and then $\sigma_{p-1}\sigma_p \cdots \sigma_{q-1}$ would be a prefix of~$s$ preserving the roundness of~$\mathcal C_y$, a contradiction. In the same way, no strand of~$s$ in~$R$ can cross {\it all} the strands in~$I$. In summary, $s$ is a simple braid of a very particular form: some strands of~$L$ may cross some (but not all) strands of~$I$,  some strands of~$R$ may cross some (but not all) strands of~$I$, and any two strands belonging to the same group ($L$, $I$, or $R$) never cross.

Recall that~$y$ and~$z$ are rigid, and let $y_1\cdots y_r$ and $z_1\cdots z_r$ be their left normal forms. For $i=0\ldots,r$, we denote $[\mathcal C_{y,i}]=[\mathcal C_y]^{y_1\cdots y_i}$ and $[\mathcal C_{z,i}]=[\mathcal C_z]^{z_1\cdots z_i}$. By Theorem~\ref{T:BGN}, $\mathcal C_{y,i}$ is round for every~$i$, and by the rigidity of~$z$ it follows that $\mathcal C_{z,i}$ is not round for any~$i$.  Now, for $i=0,\ldots,r$, consider the braid $s_i=(y_i^{-1}\cdots y_1^{-1})s(z_1\cdots z_i)$, which is the~$i$th transport of~$s$ under cycling (see~\cite{Gebhardt}). Notice that $s_0=s_r=s$. As transport under cycling preserves prefixes, products, greatest common divisors, and the positivity and simplicity of braids~\cite{Gebhardt}, it follows that~$s_i$ is a simple element for $i=0\ldots, r$. Hence~$s_i$ is a simple element that conjugates the rigid braid $y_{i+1}\cdots y_r y_1\cdots y_i$ to the rigid braid $z_{i+1}\cdots z_rz_1\cdots z_i$, and sends the round curve $\mathcal C_{y,i}$ to the non-round curve $\mathcal C_{z,i}$.

We claim that for $i=0,\ldots,r$, the element~$s_i$ is an indecomposable conjugator. Indeed, if this is not the case, we have a decomposition $s_i=a_ib_i$, where~$a_i$ and~$b_i$ are nontrivial simple braids, such that $a_i^{-1}(y_{i+1}\cdots y_r y_1\cdots y_i)a_i$ is a rigid braid, whose left normal form will have the form $w_{i+1}\cdots w_r w_1\cdots w_i$. We can apply transport under cycling to $a_i$~\cite{Gebhardt} and we obtain $w_{i+1}^{-1} a_i y_{i+1}$. As $a_i$ is conjugating a rigid braid to another rigid braid, the second transport of $a_i$ will be $a_i^{(2)}= w_{i+2}^{-1} w_{i+1}^{-1} a_i y_{i+1} y_{i+2}$. Iterating this process, it follows that the~$r$th transport of $a_i$ under cycling will be $a_i^{(r)}=(w_i^{-1}\cdots w_1^{-1}w_r^{-1}\cdots w_{i+1}^{-1}) a_i (y_{i+1}\cdots y_ry_1\cdots y_i)=a_i$. So~$a_i$ is preserved by~$r$th transport. This implies that no transport of~$a_i$ can be trivial (since the transport of the trivial braid is trivial). In particular the $(r-i)$th transport of~$a_i$, that we will denote~$a_r$, is not trivial. In the same way, the~$r$th transport of~$b_i$, that we will denote~$b_r$, is not trivial. Hence, by the properties of transport, $a_r$ and~$b_r$ are simple, nontrivial braids such that $y^{a_r}$ is rigid, and $a_rb_r=s$ (as $a_ib_i=s_i$ and transport preserves products). This contradicts the indecomposability of~$s$, and shows that~$s_i$ must be an indecomposable conjugator for $i=0,\ldots,r$.

Recall that~$s_i$ sends the round curve~$\mathcal C_{y,i}$ to the non-round curve~$\mathcal C_{z,i}$. As~$s_i$ is an indecomposable conjugator, an argument analogous to the one we used for~$s$, tells us that no prefix of~$s_i$ can send~$\mathcal C_{y,i}$ to a round
curve. Hence, each~$s_i$ is a very special simple braid which has the same form, with respect to~$\mathcal C_{y,i}$, as~$s$ has with respect
to~$\mathcal C_y$.

Notice that we have the equalities:
\begin{equation}\label{E:ys}
  (y_1\cdots y_r)s \; = \; (y_1\cdots y_{r-1})s_{r-1}(z_r)  \; =\cdots = \; (y_1\cdots y_i)s_i (z_{i+1}\cdots z_r)  \; = \cdots =  \; s (z_1\cdots z_r).
\end{equation}
We are now going to deal with the strands in the factors $y_1,\ldots,y_r$ and $s_0,\ldots,s_r$. In order to avoid confusion, we will refer to the strands in each of these factors by the position they have at the beginning of the braid $y_1\cdots y_r s$, or any of the alternative factorizations shown in~(\ref{E:ys}). For instance, if we refer to the strand~$k$ of~$s_i$, we mean the strand of~$s_i$ which starts at position~$k$ at the beginning of $(y_1\cdots y_i)s_i (z_{i+1}\cdots z_r)$. Notice that, as~$y$ is pure, there is no ambiguity with the names of the strands of $s=s_0=s_r$.

Since $[\mathcal C_z]=[\mathcal C_y]^s$ is not round, some strand of~$s$ in either~$L$ or~$R$ must cross some strand in~$I$. Suppose that some strand in~$L$ does (the other case is symmetric). Then the rightmost strand of~$s$ in~$L$, that is, the strand $p-1$ of~$s$, must cross some strands in~$I$. Let us define the set~$I_0$ to be the set of strands in~$I$ which are crossed in~$s$ by the strand~$p-1$. In the same way, define for $i=1,\ldots,r$, the set~$I_i$ to be the set of strands inside~$\mathcal C_{y,i}$ which are crossed, in~$s_i$, by the rightmost strand that starts to the left of~$\mathcal C_{y,i}$.

We will show that $I_i\subset I_{i+1}$ for $i=0,\ldots,r-1$. Indeed, since~$y$ is rigid, one has $\iota(y)\wedge \iota(y^{-1})=y_1\wedge \iota(y^{-1})=1$, and since $s\preccurlyeq \iota(y^{-1})$, one also has $y_1\wedge s=1$ (applying transport under cycling to this equality one has $y_i\wedge s_{i-1}=1$ for all~$i$). Now the strand $p-1$ of~$s$ crosses some strands in~$I$, in particular, the strands~$p-1$ and~$p$ cross in~$s$, hence they do not cross in~$y_1$. As~$y_1$ preserves the roundness of~$\mathcal C_y$, this implies that the strand~$p-1$ of~$y_1$ crosses no strand in~$I$ (since either it crosses all of them or it crosses none).

We claim that $y s\wedge \Delta=y_1s_1$. Indeed, by definition $s_1=y_1^{-1}sz_1$, that is $y_1s_1=sz_1$. Recall that $s\preccurlyeq \iota(y^{-1})=\partial(y_r)$, which means that~$y_rs$ is simple. The transport under cycling (based at $y_ry_1\cdots y_{r-1}$) of this simple braid is precisely $y_1s_1$, so $y_1s_1=sz_1$ is simple. Then $y_1s_1= y_1s_1\wedge \Delta = sz_1\wedge \Delta = s(z\wedge \Delta )\wedge \Delta = sz \wedge s\Delta \wedge \Delta = sz \wedge \Delta = ys \wedge \Delta$, showing the claim.

Now recall that the strand $p-1$ of~$y_1$ crosses no strand in~$I$. As $s^{-1}y s = z$ is a positive braid, $s\preccurlyeq y s$. Since
$s$ is simple, $s\preccurlyeq y s \wedge \Delta = y_1 s_1$ by the above claim. This means that the strands crossed by $p-1$ in~$s$ must also be crossed in $y_1s_1$ but they are not crossed in~$y_1$, hence they are crossed in $s_1$. Notice that the strand $p-1$ of~$s$ does not need to be the rightmost strand to the left of~$\mathcal C_{y,1}$ at the beginning of~$s_1$. Nevertheless, since neither the strands to the left of~$\mathcal C_{y,1}$ nor the strands inside~$\mathcal C_{y,1}$ cross in~$s_1$, if some of the strands to the left of~$\mathcal C_{y,1}$ crosses an interior strand, then the rightmost strand to the left of~$\mathcal C_{y,1}$ also crosses it. Therefore, the strands which are crossed by $p-1$ in~$s$ are crossed by the rightmost strand to the left of~$\mathcal C_{y,1}$ in~$s_1$. In other words,~$I_0\subset I_1$. Applying the same argument to the rigid braids $y_{i}\cdots y_r y_1\cdots y_{i-1}$ for $i=2,\ldots,r$, it follows that $I_0\subset I_1\subset \cdots \subset I_r$. Since by definition $I_r=I_0$, we have the equality $I_i=I_j$ for all $i,j\in \{0,\ldots,r\}$.

Notice that $0< \#(I_0)< q-p-1$, that is,~$I_0$ contains some interior strands but not all of them. Let us now define $J_0=I\backslash I_0$. We will see that no strand of~$I_0$ crosses a strand of~$J_0$ in the whole braid $y$.  Indeed, since the strands in $I_i=I_0$ are the strands in~$s_i$ crossed by the rightmost strand to the left of~$\mathcal C_{y,i}$, it follows that they are the leftmost $\#(I_0)$ strands inside~$\mathcal C_{y,i}$ at the beginning of each $s_i$, that is, at the end of each $y_{i-1}$. Therefore, for $i=1,\ldots,r$, the leftmost $\#(I_0)$ strands inside~$\mathcal C_{y,i}$ at the end of each $y_i$ are always the same, meaning that they never cross in~$y$ with the other interior strands, that is, with the strands in~$J_0$. But this implies that the interior braid of~$y$ corresponding to~$\mathcal C_y$ is {\it split}, that is, the generator $\sigma_{\#(I_0)}$ does not appear in any positive word representing that interior braid. This contradicts the fact that the interior braid is pseudo-Anosov, since a pseudo-Anosov braid can never be split.
\end{proof}


\end{document}